\documentclass{amsart}
\usepackage{amsmath,amssymb,amsthm,amscd}
\usepackage[a4paper,text={140mm,210mm},centering,headsep=5mm,footskip=10mm]{geometry} 
\usepackage[matrix,arrow, curve]{xy}
\numberwithin{equation}{section}
\usepackage{tikz}
\usepackage{color}
\usepackage{graphicx}
\usepackage[hypertexnames=false,linkcolor=black, colorlinks=true, citecolor=black, filecolor=black,urlcolor=black]{hyperref}

\definecolor{mediumgray}{HTML}{888888}
\definecolor{dimgray}{HTML}{BBBBBB}
\definecolor{brightred}{HTML}{FF5555}
\definecolor{codehighlightcolor}{HTML}{AF5FD7}

\definecolor{darkgreen}{rgb}{0.0, 0.7, 0.0}
 \definecolor{bananayellow}{rgb}{1.0, 0.88, 0.21}
\definecolor{choc}{rgb}{0.82, 0.41, 0.12}
 	
\definecolor{softgray}{gray}{0.55}
\definecolor{verysoftgray}{gray}{0.75}

\newcounter{CountAlpha}

\newtheorem{ThmIntro}[CountAlpha]{Theorem}
\newtheorem{CorIntro}[CountAlpha]{Corollary}

\newtheorem{Thm}{Theorem}[section]
\newtheorem{Cor}[Thm]{Corollary}
\newtheorem{Lem}[Thm]{Lemma}
\newtheorem{Prop}[Thm]{Proposition}
\newtheorem{Claim}[Thm]{Claim}

\newtheorem{Setup}[Thm]{Setup}

\theoremstyle{definition}
\newtheorem{Def}[Thm]{Definition}

\newtheorem{Rk}[Thm]{Remark}

\newtheorem{Obs}[Thm]{Observation}
\newtheorem{Constr}[Thm]{Construction}

\numberwithin{equation}{section}

\newcommand{\fp}{\mathfrak{p}}

\newcommand{\fM}{\mathfrak{M}}
\newcommand{\hR}{\widehat{R}}



\newcommand{\A}{\mathbb{A}}
\newcommand{\IQ}{\mathbb{Q}}

\newcommand{\IR}{\mathbb{R}}	
\newcommand{\R}{\mathbb{R}}	
\newcommand{\IZ}{\mathbb{Z}}	
\newcommand{\cO}{\mathcal{O}}

\newcommand{\cF}{\mathcal{F}}
\newcommand{\tcF}{\widetilde{\mathcal{F}}}

\newcommand{\cJ}{\mathcal{J}}
\newcommand{\cL}{\mathcal{L}}
\newcommand{\cR}{\mathcal{R}}
\newcommand{\cS}{\mathcal{S}}
\newcommand{\cu}{\resizebox{0.16cm}{0.16cm}{$ \mathcal{U} $}}
\newcommand{\indcu}{\resizebox{0.12cm}{0.12cm}{$ \mathcal{U} $}}
\newcommand{\cv}{\resizebox{0.16cm}{0.16cm}{$ \mathcal{V} $}}
\newcommand{\indcv}{\resizebox{0.12cm}{0.12cm}{$ \mathcal{V} $}}

\newcommand{\cZ}{\mathcal{Z}}
\newcommand{\tJ}{\widetilde{J}}

\newcommand{\tM}{\widetilde{M}}
\newcommand{\tR}{\widetilde{R}}
\newcommand{\tf}{\widetilde{f}}
\newcommand{\tg}{\widetilde{g}}
\newcommand{\ts}{\widetilde{s}}
\newcommand{\tu}{\widetilde{u}}
\newcommand{\tx}{\widetilde{\mathrm{x}}}
\newcommand{\ty}{\widetilde{y}}

\newcommand{\al}{\alpha}
\newcommand{\be}{\beta}
\newcommand{\ga}{\gamma}
\newcommand{\de}{\delta}
\newcommand{\ep}{\epsilon}

\newcommand{\la}{\lambda}
\newcommand{\si}{\sigma}

\newcommand{\ini}{\mathrm{in}}
\newcommand{\Ini}{\mathrm{in}}
\newcommand{\gr}{\mathrm{gr}}

\newcommand{\ord}{\mathrm{ord}}
\newcommand{\Dir}{\mathrm{Dir}}

\newcommand{\Proj}{\mathrm{Proj}}
\newcommand{\Spec}{\mathrm{Spec}}
\newcommand{\wrt}{with respect to }

\newcommand{\poly}[3]{\Delta( \, #1  ; #2 ; #3 \, ) }
\newcommand{\cpoly}[2]{\Delta( \, #1  ; #2 \, ) }

\newcommand{\expo}{c}

\newcommand{\x}{\mathrm{x}}

\begin{document}
\title
{
	Invariance of Hironaka's characteristic polyhedron
}

\author{Vincent Cossart}
\address{Vincent Cossart\\ 
	Professeur \'em\'erite at Laboratoire de Math\'emathiques LMV UMR 8100\\ 
	Universit\'e de Versailles	\\
	45 avenue des \'Etats-Unis\\
	78035 VERSAILLES Cedex\\
	 France}
\email{cossart@math.uvsq.fr}

\author{Uwe Jannsen}
\address{Uwe Jannsen\\
	Fakult\"at f\"ur Mathematik\\
	Universit\"at Regensburg\\
	Universit\"atsstr. 31\\
	93053 Regensburg \\
	Germany
	}
\email{uwe.jannsen@mathematik.uni-regensburg.de}

\author{Bernd Schober}
\address{Bernd Schober \\
	Institut f\"ur Algebraische Geometrie\\
	Leibniz Universit\"at Hannover\\
	Welfengarten 1\\
	30167 Hannover\\
	Germany}
\email{schober@math.uni-hannover.de}

\dedicatory{To Professor Felipe Cano on the occasion of his 60th birthday}

\subjclass[2010]{Primary 14J17; Secondary 14B05, 14E15, 52B99.}
\keywords{Invariants for singularities, Hironaka's characteristic polyhedra, resolution of singularities.}

\maketitle

\begin{abstract}
	We show that given a face of Hironaka's characteristic polyhedron, it does only depend on the singularity and a flag defined by the linear form determining the face. 
	As a consequence we get that 
	certain numerical data obtained from the characteristic polyhedron are invariants of the singularity.
	In particular, they do not depend on an embedding.  
\end{abstract}

\section*{Introduction}
\label{Intro}

Hironaka's characteristic polyhedron is an important tool for studying the local nature of a singularity defined by a non-zero ideal $ J \subset R $, where $ R $ is a regular local ring. 
For example, in \cite{CJS}, \cite{CPmixed}, \cite{CPmixed2}, \cite{InvDim2}, and \cite{HiroBowdoin}, the improvement of the singularity along a prescribed resolution process is detected using numerical data obtained from the characteristic polyhedron.
Furthermore, in \cite{SCBM}, the third author draws a connection between an invariant for resolution of singularities in characteristic zero and characteristic polyhedra of so called idealistic exponents.
(These polyhedra are closely related to Hironaka's characteristic polyhedron, see \cite{SCarthIdExp}).

Since the characteristic polyhedron is defined using an embedded situation $ J \subset R $, an essential part is to prove that the numerical data obtained from it {are} independent of the embedding and hence an invariant of the singularity $ \Spec(A) $, for $ A = R/J $.
The goal of this article is to show that, in fact, the polyhedron itself does only depend on $ A $ and the choice of elements $ (v) = (v_1, \dots, v_d ) \in A^d $ that we introduce below.

Let us be more precise.
Let $ (A, \fM , k = A/\fM ) $ be a local Noetherian ring (not necessarily regular) and put $ X := \Spec(A ) $.
Let $ ( v ) = (v_1, \ldots, v_d ) $ be a regular $ A $-sequence.
We set 
$$
	A' := A /\langle v_1, \ldots, v_d \rangle 
	\ \ \ 
	\mbox{ and }
	\ \ \ 
	X' := \Spec(A').
$$
We additionally assume that the ring of the directrix of $ X' $ at the origin coincides with the residue field $ k $.
(The directrix is a natural notion associated to the tangent cone of a scheme at a given point.
We sometimes also speak of the directrix of $ A' $, or, if we have given $ J' \subset R' $ such that $ A' = R'/J' $, we also speak of the directrix of $ J' $.
For more details and the precise definition, we refer to the appendix of this article).

Let $ (R,M, k=R/M) $ be a regular local ring and $ J \subset R $ be a non-zero ideal such that $ A \cong R/J $.
Let $ (u) = (u_1, \ldots, u_d) $ be elements in $ R $ mapping to {$ (v) =  (v_1, \ldots, v_d) $} under the canonical projection $ \pi : R  \to R/J $.
Note that the property on the directrix of $ A' $ translates to a certain condition on the directrix of $ J $ in $ R' := R / \langle u \rangle $ (see \eqref{eq:cond_Dir(J')}),
which is important to ensure that the characteristic polyhedron $\cpoly Ju $ can be explicitly computed (see section~\ref{sec:char_poly}).

We choose positive rational numbers $ \la_1, \ldots, \la_d \in \IQ_+  $, i.e., $ \la_i > 0 $ for all $ i \in \{ 1, \ldots, d \} $.
They determine a positive linear form $ \Lambda : \IR^d \to \IR $,
$$
	\Lambda(\x_1, \ldots, \x_d ) := \la_1 \x_1 + \ldots \la_d \x_d.
$$
We define 
$$
	 \delta_\Lambda := \delta_\Lambda (J; u ) := \inf \{ \Lambda(\x) \mid \x \in \cpoly Ju \}.
$$
If $ \cpoly Ju = \varnothing $, then $ \delta_\Lambda = \infty $.
	Otherwise, $ \delta_\Lambda < \infty $ and
there is a compact face $ \cF_\Lambda $ of $ \cpoly Ju $ that is defined by $ \Lambda $, 
namely,
$$
	\cF_\Lambda = \cpoly Ju \cap \{ \x \in \IR^d \mid \Lambda(\x) = \delta_\Lambda \}.
$$ 

The linear form $ \Lambda $ induces a monomial valuation on $ R $, which provides a filtration on $ R/J $ and hence a graded ring
$$
	\gr_\Lambda(R/J).
$$ 
(For more details see Definition \ref{Def:vLambda}).
The latter is an interesting object when studying singularities and their behavior along a resolution process. 

Furthermore, $ \Lambda $ (or, equivalently, the positive rational numbers $ \la_1, \ldots, \la_d $)
give rise to a flag $ F^\Lambda_{\bullet} $ in $ A $
(see section \ref{sec:flags}),
which will be crucial for our considerations.
If we have $ \la_1 \leq \la_2 \leq \ldots \leq \la_d $ and if $ \la_{\alpha(1)}, \ldots, \la_{\alpha(l)} $ are the pairwise different values (where we choose each $ \al(i) $ minimal, e.g., $ \la_{\al(1)} = \la_1 $), then
$$
\begin{array}{c} 
	F^\Lambda_\bullet = F^\Lambda_{\bullet, v} =  \{
	 F_1 \subset \ldots \subset F_l %
	 \},
	\ \  \ \	
	\mbox{for }

	F_i := V (v_{\alpha(i)}, \ldots, v_d).
\end{array}
$$

\begin{ThmIntro}
	\label{Thm:Intro}
	Let $ A $ be a local Noetherian ring.
	Let $ R $ and $ \cR $ be regular local rings and $ J \subset R $ and $ \cJ \subset \cR$ be non-zero ideals such that $ R/ J \cong \cR/\cJ \cong A $.
	Let $ \la_1, \ldots, \la_d \in \IQ_+ $ be positive rational numbers and denote by $ \Lambda : \IR^d \to \IR $ the corresponding positive linear form.
	Let $ (v) = (v_1 ,\ldots, v_d) $ and $ (\cv) = (\cv_1 ,\ldots, \cv_d) $ be elements in $ A $ whose flags in $ A $ induced by $ \Lambda $ coincide,
	$$
	F^\Lambda_{\bullet, v} = F^\Lambda_{\bullet, \indcv} 
	$$
	Let $ (u) = (u_1, \ldots, u_d ) $, resp.~$ (\cu) = (\cu_1, \ldots, \cu_d ) $, be elements in $ R $, resp.~$ \cR $, mapping to $ ( v ) $, resp.~$ (\cv) $, under the corresponding canonical projection. 
	Then 
	$$
	\delta_\Lambda (J;u) = \delta_\Lambda (\cJ;\cu)
	$$
	and, if $ \delta_\Lambda (J;u) < \infty $, then 
	$$
	\gr_\Lambda(R/J) \cong \gr_\Lambda(\cR/\cJ).
	$$
	More precisely: the isomorphism $ R/ J \cong \cR/\cJ $ respects the filtration defined by $\Lambda$. %

\end{ThmIntro}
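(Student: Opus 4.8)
The plan is to reduce both assertions to a single statement: that the increasing filtration induced on $A$ by the monomial valuation $\nu_\Lambda$ of Definition~\ref{Def:vLambda} is intrinsic, depending only on $A$, on $\lambda_1,\dots,\lambda_d$ and on the flag, but neither on the presentation nor on the chosen lift of $(v)$. Write $\pi\colon R\to A$ for the projection and set $w(a):=\max\{\,\nu_\Lambda(x)\mid x\in R,\ \pi(x)=a\,\}$ for the resulting order function on $A$. Since $\gr_\Lambda(R/J)$ is by construction the associated graded ring of $w$, a coincidence of the filtrations computed from $(R,J,u)$ and from $(\cR,\cJ,\cu)$ turns the given isomorphism $R/J\cong A\cong\cR/\cJ$ into a filtered one and yields $\gr_\Lambda(R/J)\cong\gr_\Lambda(\cR/\cJ)$; moreover $\delta_\Lambda$ is recovered from $w$, so the two values agree as well. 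As a preliminary I would pass to the $\fM$-adic completion—this affects none of $\nu_\Lambda$, $w$, $\delta_\Lambda$ or the directrix hypothesis—and so assume by Cohen's theorem that $R=k[[u,y]]$ and $\cR=k[[\cu,\cy]]$ are power series rings. The case $\cpoly{J}{u}=\varnothing$, which is equivalent to $A$ being regular with $(v)$ part of a regular system of parameters, is intrinsic and gives $\delta_\Lambda=\infty$ on both sides; so I assume $\delta_\Lambda<\infty$.

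The central mechanism is a pair of comparison maps. Using that $\cR$ is formally smooth over $k$, I lift the identity of $A$ to a homomorphism $\Phi\colon R\to\cR$ with $\Phi(J)\subseteq\cJ$, and symmetrically a $\Psi\colon\cR\to R$ with $\Psi(\cJ)\subseteq J$, both reducing to $\mathrm{id}_A$ modulo the ideals. The freedom in choosing these lifts is used to make them compatible with the weights. Ordering so that $\lambda_1\le\dots\le\lambda_d$, the hypothesis $F^\Lambda_{\bullet,v}=F^\Lambda_{\bullet,\indcv}$ says precisely that $\langle v_j\mid\lambda_j\ge\mu\rangle=\langle\cv_j\mid\lambda_j\ge\mu\rangle$ in $A$ for every threshold $\mu$; hence each $v_i$ lies in $\langle\cv_j\mid\lambda_j\ge\lambda_i\rangle$, and I may take $\Phi(u_i)$ to be a lift of an expression $\sum_{\lambda_j\ge\lambda_i}a_{ij}\cv_j$. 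Then $\nu_\Lambda(\Phi(u_i))\ge\lambda_i=\nu_\Lambda(u_i)$, so $\Phi$ is weight-non-decreasing on the base directions. This is the one and only place the flag hypothesis enters on the $u$-variables, and symmetrically for $\Psi$.

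The hard part, and the step I expect to be the main obstacle, is to control the directrix directions: I must show that $\Phi$ and $\Psi$ are weight-non-decreasing on the $y$- resp.\ $\cy$-variables as well, equivalently that a direction transverse to the flag carries the same weight $\delta_\Lambda$ in both presentations. This is exactly where the directrix hypothesis on $A'=A/\langle v\rangle$ is indispensable, and I would split it into two steps. First, the equality $\delta_\Lambda(J;u)=\delta_\Lambda(\cJ;\cu)$: the condition $\Dir(J')=\Spec(k)$ of \eqref{eq:cond_Dir(J')} forces the initial forms of the $y_j$ to behave as a regular sequence in $\gr_\Lambda$, so that the minimal weight carried by a transverse direction is an invariant of $A'$ together with the flag—and this minimal weight is precisely $\delta_\Lambda$, whence both presentations yield the same value. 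Second, with this common value at hand, the well-preparedness of the characteristic polyhedron (no vertex, hence no directrix direction, can be lowered—see Section~\ref{sec:char_poly}) lets me choose the lift of each $\overline{y}_j$ in $\cR$ of weight $\ge\delta_\Lambda$, so that $\Phi$ is weight-non-decreasing on the $y$-variables, and symmetrically for $\Psi$.

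Granting that $\Phi$ and $\Psi$ are weight-non-decreasing on the regular parameters, and therefore on all of $R$ resp.\ $\cR$, the conclusion is formal. For $a\in A$ pick an optimal lift $x\in R$ with $\nu_\Lambda(x)=w(a)$; then $\Phi(x)$ lifts $a$ in $\cR$ and has weight $\ge w(a)$, so the $\cR$-order of $a$ is at least its $R$-order, while $\Psi$ supplies the reverse inequality. Hence the two filtrations on $A$ coincide, the isomorphism $R/J\cong A\cong\cR/\cJ$ is filtered, and passing to associated graded rings gives $\gr_\Lambda(R/J)\cong\gr_\Lambda(\cR/\cJ)$ respecting filtrations; the equality $\delta_\Lambda(J;u)=\delta_\Lambda(\cJ;\cu)$ is the value already matched in the previous step.
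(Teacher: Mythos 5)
Your strategy --- lift $\mathrm{id}_A$ to comparison homomorphisms $\Phi\colon R\to\cR$ and $\Psi\colon\cR\to R$ and show they are weight-non-decreasing --- is genuinely different from the paper's, which never compares the two embeddings directly: it instead builds a combinatorial sequence of permissible blowing-ups $(\cS_*)$ from $(A,F^\Lambda_\bullet)$ alone, recovers $\delta_\Lambda$ from its length (Theorem \ref{Thm:delta_from_S_*}) and $\gr_\Lambda(R/J)$ from the function ring of $\mathrm{div}(t)$, and then proves (Proposition \ref{Prop:Same_sequences}, Lemma \ref{Lem:key}) that this sequence depends only on $A$ and the flag. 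The decisive gap in your route is the step you yourself flag as the hard part: that $\Phi$ can be chosen weight-non-decreasing on the $y$-directions. This is asserted, not proved, and it is essentially equivalent to the theorem. Concretely, $\nu_\Lambda(y_j)_{(u,y)}=1$, while $\Phi(y_j)$ is merely some lift of $\overline{y}_j\in\fM$ to $\cR$ and may contain a linear term $a\,\cu_i$ with $\la_i<\delta_\Lambda(\cJ;\cu)$, giving $\nu_\Lambda(\Phi(y_j))_{(\cu,\cy)}=\la_i/\delta_\Lambda<1$; whether such a term can be removed by modifying the lift by an element of $\cJ$ is exactly the question of whether the weight of $\overline{y}_j\in A$ is presentation-independent, i.e.\ an instance of the statement to be proved. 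Invoking \eqref{eq:cond_Dir(J')} to make ``the initial forms of the $y_j$ behave as a regular sequence in $\gr_\Lambda$'', or the well-preparedness of $(f;u;y)$, does not close this: well-preparedness is internal to the presentation $J\subset R$ and says nothing about lifts taken in $\cR$. (The paper's substitute for this step is Lemma \ref{Lem:key}, which controls the directrix at the generic points of the successive centers --- the place where the ridge/directrix discrepancy in positive characteristic must be confronted.) The same circularity affects your first step, where the equality of the two values of $\delta_\Lambda$ is deduced from an unproved claim that ``the minimal weight carried by a transverse direction'' is an invariant of $A'$ and the flag.

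Two of your reductions are also faulty as stated. The appeal to Cohen's theorem to write $R=k[[u,y]]$, and the use of formal smoothness over $k$ to produce $\Phi$, is only valid in equal characteristic; the paper explicitly allows $A$ of mixed characteristic, where a complete regular local ring need not be formally smooth over its Cohen ring (ramified case) and lifting $\mathrm{id}_A$ is not automatic. And the claim that $\cpoly Ju=\varnothing$ is equivalent to $A$ being regular with $(v)$ part of a regular system of parameters is false: for $J=\langle y_1^2\rangle\subset k[[u_1,y_1]]$ the polyhedron is empty but $A$ is singular. Emptiness of the polyhedron is intrinsic, but for a different reason (existence of a $d$-dimensional regular subscheme of $X$ along which $X$ is normally flat; in the paper it is detected by the sequence $(\cS_*)$ being infinite), so this case too still requires an argument.
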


\medskip

In other words, 
$ \delta_\Lambda (J;u) $ as well as $ \gr_\Lambda(R/J) $ do not depend on the embedding $ J \subset R $ and thus they are invariants of the singularity $ X = \Spec(A) $ and the flag $ F^\Lambda_\bullet $ in $ A $ induced by $ \Lambda $.  
Therefore, we may set 
$ \delta_\Lambda ( A , {v}) :=  \delta_\Lambda (J;u) $ and 
$ \gr_\Lambda(A) := \gr_\Lambda(R/J) $.
Furthermore, note that $ \{ \x \in \IR^d \mid \Lambda(\x) = \delta_\Lambda(A, {v}) \}$ defines a compact face of $ \cpoly \cJ\cu  $ as well as of $ \cpoly Ju $,
if $ \delta_\Lambda (A, {v}) < \infty $.

\smallskip 

The strategy for the proof is to construct a sequence of combinatorial blowing-ups that only depends on the quotient $ R/J $ and the flag $ F^\Lambda_{\bullet} $ associated to $ \Lambda $.
The important properties of the sequence will be that one can recover the number $ \delta_\Lambda(J;u) $ from its length
and that the graded ring $ \gr_\Lambda (R/J) $ naturally appears.
After defining the sequence for a given embedding $ J \subset R $,
we explain afterwards its independence of this choice.

Theorem \ref{Thm:Intro} is a generalization of the results in \cite{CosRevista}.
In contrast to the latter we are not restricted to work over the complex numbers.
(Note that $ A $ might even be of mixed characteristics). 
Further, in \cite{CosRevista} appears a condition $ (*) $ 
(in our notations: $ 0 < \la_i \leq \delta_\Lambda $, for all $ i \in \{ 1, \ldots, d \} $),
which we can overcome using \'etale coverings.

\smallskip 

Given $ \Lambda $ and $ ( u )  $ in $ R $, as before, we can define a flag $ F^\Lambda_{\bullet, u } $ in $ R $ analogously to $ F^\Lambda_{\bullet, v } $ in $ A $.
The following are immediate consequences of the theorem.

\begin{CorIntro}
	\label{Cor:Intro}
	Let $ (R,M,k) $ be a regular local ring and $ J \subset R $ be a non-zero ideal.
	Let $ ( u ) = (u_1, \ldots, u_d ) $ be a system of elements in $ R $ that can be extended to a regular system of parameters for $ R $ and such that 
	the ring of the directrix of $ J \cdot R/\langle u \rangle  $ at the origin coincides with the residue field $ k $.

	\begin{enumerate}
		\item 
		If we fix the residues $ ( v ) = (v_1, \ldots, v_d ) $ of $ ( u ) $ in $ R/J $, then the compact faces of $ \cpoly Ju $ are invariants of the singularity $ \Spec(R/J) $ and $ (v) $.
		
		\smallskip 
		
		\item 
		Let $ ( \cu ) = (\cu_1, \ldots, \cu_d) $ be another system of elements in $ R $ fulfilling the analogous conditions as $ (u ) $.
		For every linear form $ \Lambda : \IR^d \to \IR $ such that the corresponding flags 
		in $ R $ coincide, 
		$ 
			F^\Lambda_{\bullet, u } = F^\Lambda_{\bullet, \indcu } ,
		$
		we have that 
		$$ 
			\delta_\Lambda (J;u) = \delta_\Lambda (J; \cu). 
		$$
		
		\smallskip 
		
		\item 
		Let $ \Lambda_0 : \IR^d \to \IR $ be the linear form given by $ \Lambda_0(\x) = |\x| = \x_1 + \ldots + \x_d $
		(i.e., which defines the so called first face of $ \cpoly Ju $).
		Set $ \mathcal{V} := \Spec(R/M) $.
		The associated flag is trivial, 
		$ \cF^{\Lambda_0}_{\bullet} = \{ {\mathcal V} \} $
		and
		$$
			\delta(R/J) := \delta_{\Lambda_0} (R/J)
		$$ 
		is  an invariant of the singularity $ \Spec (R/J) $ and $\mathcal V$, so is $ \gr_{\Lambda_0}(R/J) $.
	\end{enumerate}		
\end{CorIntro}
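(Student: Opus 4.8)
The plan is to deduce all three statements from Theorem~\ref{Thm:Intro} by specializing the two embeddings and the two systems of parameters appropriately; the substance lies entirely in the theorem, and the corollary is organizational. In each part I would first arrange the flag hypothesis of the theorem, and then read off the asserted invariance.

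For part (1) I would fix the residues $(v)$ and take a second embedding $\cJ \subset \cR$ together with an isomorphism $R/J \cong \cR/\cJ \cong A$, letting $(\cv)$ be the image of $(v)$ under it. Then $(v)$ and $(\cv)$ are the \emph{same} element of $A$, so for every positive rational linear form $\Lambda$ the induced flags in $A$ agree, $F^\Lambda_{\bullet, v} = F^\Lambda_{\bullet, \indcv}$, and Theorem~\ref{Thm:Intro} gives $\delta_\Lambda(J;u) = \delta_\Lambda(\cJ;\cu)$. It then remains to recover the compact faces from the function $\Lambda \mapsto \delta_\Lambda$. Since $\cpoly Ju$ is stable under translation by $\IR^d_{\geq 0}$, it equals the intersection of the half-spaces $\{\x \mid \Lambda(\x) \geq \delta_\Lambda\}$ taken over all positive $\Lambda$; hence the whole polyhedron, and in particular each compact face $\cF_\Lambda = \cpoly Ju \cap \{\x \mid \Lambda(\x) = \delta_\Lambda\}$, is determined by the collection $\{\delta_\Lambda\}_\Lambda$. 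As every $\delta_\Lambda$ is an invariant, so are the compact faces. This reconstruction step is the only non-formal point here, but it is elementary convex geometry.

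For part (2) I would apply the theorem with $\cR = R$, $\cJ = J$ and the identity isomorphism, comparing the two systems $(u)$ and $(\cu)$. The only thing to verify is that the hypothesis of the theorem, equality of the flags in $A$, follows from the hypothesis of part (2), equality of the flags in $R$. This is immediate from the definition of the flags: writing $\pi\colon R \to A$ for the projection and $v_j = \pi(u_j)$, one has $V(v_{\alpha(i)}, \ldots, v_d) = V(u_{\alpha(i)}, \ldots, u_d) \cap X$ as the intersection with $X = \Spec A$, so $F^\Lambda_{\bullet, u} = F^\Lambda_{\bullet, \indcu}$ forces $F^\Lambda_{\bullet, v} = F^\Lambda_{\bullet, \indcv}$. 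The theorem then yields $\delta_\Lambda(J;u) = \delta_\Lambda(J;\cu)$ at once.

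For part (3) I would take $\Lambda = \Lambda_0$, so that all weights coincide. Then there is a single distinct value, the induced flag has length one, and is therefore the trivial flag, which we identify with $\mathcal{V}$. Consequently the flag hypothesis of Theorem~\ref{Thm:Intro} holds for \emph{any} two admissible systems of parameters and any two embeddings, and the theorem shows that neither $\delta_{\Lambda_0}$ nor $\gr_{\Lambda_0}$ depends on these choices; they are invariants of $\Spec(R/J)$ together with $\mathcal{V}$. The point that genuinely uses the standing directrix hypothesis, and which I expect to be the main obstacle, is the claim that this length-one flag is the same one, namely $\{\mathcal{V}\}$, for every admissible $(u)$, that is, that the first face is coordinate free; once this is granted the invariance is formal.
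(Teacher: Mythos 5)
Your proposal is correct and follows essentially the same route as the paper: everything is reduced to Theorem~\ref{Thm:Intro}, with (1) obtained by letting $\Lambda$ range over all positive forms, (2) by noting that equality of flags in $R$ descends to equality of flags in $A$, and (3) by observing the flag for $\Lambda_0$ has length one. The only cosmetic difference is in (1): where you reconstruct $\cpoly Ju$ as the intersection of the half-spaces $\{\Lambda \geq \delta_\Lambda\}$, the paper argues by contradiction, separating a hypothetical vertex of one polyhedron from the other by a positive linear form --- the same piece of convex geometry read in the dual direction. As for the caveat you raise at the end of (3): no coordinate-freeness of the first face is needed, because the corollary only asserts invariance of the pair $(\Spec(R/J), \mathcal{V})$; in the version proved in Section~5 the subscheme $\mathcal{V}=V(v)$ is explicitly part of the fixed data, so the length-one flag $\{\mathcal{V}\}$ is determined by that data and Theorem~\ref{Thm:Intro} applies verbatim (the identification of $\mathcal{V}$ with the closed point $\Spec(R/M)$ in the introduction is a looseness of the paper, not a gap you must fill).
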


Part (2) states that the faces of $ \cpoly Ju $, for which the flags associated to the defining linear forms do not change when passing from $ (u ) $ to $ (\cu) $, are stable.

The graded ring $ \gr_\delta (R/J) := \gr_{\Lambda_0} (R/J) $ associated to $ \Lambda_0 $ (or equivalently to $ \delta(R/J) $)
is a refinement of the tangent cone of $ R/J $ at the origin.
By (3), it is an invariant of the singularity $ \Spec(R/J) $ which appears in several contexts,
\cite{CPmixed2} (definitions of the invariants $\omega$ and $\epsilon$ in section~2.7)
or
\cite{InvDim2} Theorem 3.15.

\medskip

After recalling the definition of Hironaka's characteristic polyhedron in section 1, we provide the construction of a combinatorial sequence of blowing-ups arising from a linear form following \cite{CosRevista}.
In section 3, we discuss the behavior of the characteristic polyhedron under these blowing-ups.
After that we prove Theorem \ref{Thm:Intro} by connecting $ \delta_\Lambda(J;u) $ with the length of 
{a sequence of blowing-ups that is closely related to the one constructed before.}
Finally, we deduce some consequences of Theorem \ref{Thm:Intro}
	in section 5.
In the appendix we provide an algorithm to compute the directrix of a cone from its ridge.

%
%
%
%
%
%
%
%
%
%
%
%
%
%
%
%
%
%
%
%
%
%
%


\bigskip

\section{Hironaka's Characteristic Polyhedron}
\label{sec:char_poly}

First, we briefly recall the definition of the characteristic polyhedron.
In particular, we fix the setting.

\smallskip 

Let $ (R,M, k = R/M) $ be a regular local ring, 
$ \langle 0 \rangle \neq J \subset M \subset R $ a non-zero ideal,
$ ( u_1, \ldots, u_d ) $ a system of regular elements that can be extended to a regular system of parameter for $ R $.
Set $ R' = R/ \langle u \rangle $, $ M' := M/\langle u \rangle $, and $ J' = JR' $.
The {\em associated graded ring of $ R' $ at $ M' $} is defined as
$$ 
	\gr_{M'} (R')  := \bigoplus_{j \geq 0} (M')^j/ (M')^{j+1} = k[M' / (M')^2 ]. 
$$ 
and its degree one part is $ \gr_{M'}(R')_1 = M' / (M')^2 $.

The {\em initial ideal of $ J' $ \wrt $ M' $} is defined as the homogeneous ideal $ \Ini_{M'} (J') $ in $ \gr_{M'} (R') $ generated by the initial forms 
$$ 
	\ini_{M'} ( f' ) := f' \mod (M')^{n(f') + 1}, \ {\ini_{M'}(0):=0},
$$
where  {$ n(f') := \ord_{M'} ( f' ) = \sup\{ a \in \IZ_+ \mid f' \in (M')^a \} $ when $ 0 \neq f' \in J' $}, 
$$
	\Ini_{M'} (J') := \langle \ini_{M'} ( f' ) \mid f' \in J'  \rangle. 
$$

The following is an essential assumption when it comes to the characteristic polyhedron and to explicit computations of the latter:
\begin{equation}
\label{eq:cond_Dir(J')}
	\left\{ 
	\ \ \
	\begin{array}{c}
	\mbox{There is no proper $ k $-subspace $ T \subsetneq \gr_{M'}(R')_1 $ such that}
	\\[5pt]
		$$
			(\Ini_{M'} (J') \cap k[T] ) \gr_{M'} (R') = \Ini_{M'} (J').
		$$
	\end{array} \right.  
\end{equation}

\begin{Rk}
	\label{Rk:(cond_Dir(J')_equiv_V(Y') directrix}
	Let $ ( y ) = (y_1, \ldots, y_r ) $ be elements in $ R $ 
	extending $ ( u ) $ to a regular system of parameters for $ R $.
	Let $ ( y')  = (y'_1, \ldots, y'_r ) $ be their images in $ R' $.
	Condition \eqref{eq:cond_Dir(J')} states that 
	\begin{center} 
		\parbox{300pt}{ 
			$ ( Y' ) = (Y'_1, \ldots, Y'_r) $ is a minimal set of variables such that there is a system of generators for $ \Ini_{M'} (J') $ contained in $ k [Y'] $,
		}
	\end{center} 
	i.e., the directrix of $ J ' $ at $ M' $ is
	$ 
	\Dir_{M'} (J') = V( Y'_1, \ldots, Y'_r ),
	$ 
	where $ Y'_i $ denotes the image of $ y'_i $ in $ \gr_{M'} (R') $.
	(For the precise definition of the directrix, we refer to the appendix).
\end{Rk}

\begin{Def}
	Let $ (f) = (f_1, \ldots, f_m) $ be a system of elements in $ R $
	and let $ (u,y) $ be a regular system of parameters for $ R $ such that the previous condition holds.
	
	We define the {\em projected polyhedron} of $ ( f ) $ with respect to $ ( u, y ) $ as
	$$
		\poly fuy =  \poly{f_1, \ldots, f_m}uy := \mathrm{conv} \big( \bigcup\limits_{i=1}^m \poly{f_i}uy \big)
		\subset \IR^d_{\geq 0 },
	$$
	$$
	\mbox{where } \ \ \ 
	\poly{f_i}uy := 
	\mathrm{conv} 
	\left\{
	\frac{A}{n_i-|B|} + \IR^d_{\geq 0} \mid C_{A,B,i} \neq 0  
	\wedge |B| < n_i
	\right\} \subset \IR^d_{\geq 0 },
	$$
	for $ f_i = \sum_{A,B} C_{A,B,i}\,u^A\,y^B $ (finite expansion)
	with $ C_{A,B,i} \in R^\times \cup \{ 0 \} $ 
	and $ n_i := \ord_{M'} (f_i') $.
\end{Def}

In \cite{HiroCharPoly} Theorem (4.8), p.~291, Hironaka shows that
if \eqref{eq:cond_Dir(J')} holds, there exists, 
at least in $ \hR $,
 a suitable set of generators $ ( f ) = (f_1, \ldots, f_m ) $ of $ J $ 
(a so called well-prepared $( u ) $-standard basis) 
and elements $ (y) = (y_1, \ldots, y_r) $ extending $ ( u ) $ to a regular system of parameters such that
\begin{equation}
\label{eq:poly_computed} 
	\poly fuy = \cpoly Ju.
\end{equation}
For the equality, one has to use $ \cpoly{J\widehat{R}}{u} =\cpoly Ju $ (\cite{HiroCharPoly}, Lemma (4.5), p.~290). 
In \cite{CPcompl} and \cite{CSCcompl}, it is discussed under which conditions one can avoid passing to the completion to obtain the previous equality.

\smallskip 

Since \eqref{eq:poly_computed} is important later,
{we recall what it means that $ (f;u;y) $ is well-prepared.} 
For this,
{\em {let $ ( R, M, k ) $ be a regular local ring,
	let $ J \subset R $ be a non-zero ideal, and} $ ( u,y) = (u_1, \ldots, u_d; y_1, \ldots, y_r ) $ be a regular system of parameters for $ R $ such that \eqref{eq:cond_Dir(J')} holds.}
As we already remarked before, any element $ g \in R $ has a finite expansion $ g = \sum C_{A,B} \, u^A \, y^B $ in $ R $, for $ C_{A,B} \in R^\times \cup \{ 0 \} $.

We start with the definition of a $ ( u ) $-standard basis.

\begin{Def}
	\label{Def:in_0_and_u_std}
	Let $ J \subset R $ and $ ( u,y ) $ be as above.
	Let $ ( f ) = (f_1, \ldots, f_m ) $ be a system of non-zero elements in $ R $ and 
	let $ L: \IR^d \to \IR  $ be a positive linear form, $ L (\x_1, \ldots, \x_d) = a_1 \x_1 + \ldots, a_d \x_d $, for $ a_i \in \IQ_+ $.
	\begin{enumerate}
		\item 
		Let $ g = \sum C_{A,B} \, u^A \, y^B \in R \setminus \{ 0 \} $ with $ g \notin \langle u \rangle $, i.e., 
		$
			 n_{(u)}(g) := \ord_{M'}(g') < \infty.
		$	
		The {\em $ 0 $-initial form of $ g $} (\wrt $ ( u,y ))$ is defined as
		$$
			\ini_0(g) := \ini_0(g)_{(u,y)} 
			:= \sum_{|B| = n_{(u)}(g)} \overline{C_{0,B}} \, Y^B,
			\ \ \ 
			\overline{C_{0,B}} := C_{0,B} \mod M \in k. 
		$$
		
		\smallskip 
		
		\item 
		For $ g = \sum C_{A,B} \, u^A \, y^B \in R \setminus \{ 0 \} $ the {\em $ L $-valuation of $ g $},
		{\wrt $ (u, y) $, is defined as}
		$$
			{\widetilde{\nu}}_L(g) :=  {\widetilde{\nu}}_L(g)_{(u,y)} :=
			\min \{ L(A ) + |B| \mid C_{A,B} \neq 0  \}.
		$$	
		The {\em $ L $-initial form of $ g $} is defined as
		$$
			\ini_L(g) := \ini_L(g)_{(u,y)}
			:= \ini_0(g) + \sum_{A,B} \overline{C_{A,B}} \, U^A \, Y^B
			\in \gr_{{\widetilde{\nu}}_L}(R) \cong k [U, Y]
			,
		$$
		where $ (A,B) $ ranges over those elements in $ \IZ^{d+r}_{\geq 0 } $ satisfying $ L(A) + |B| = {\widetilde{\nu}}_L(g) $,
		and $ (U_1 \ldots, U_d, Y_1, \ldots, Y_r) $ denote the images of $ (u,y) $ in $ \gr_{{\widetilde{\nu}}_L}(R) $.
		Note {that} $ \ini_L (0) := 0 $.

		\smallskip 
		
		\item
		$ ( f ) $ is called a {\em $(u) $-standard basis for $ J $} if there exists a positive linear form $ L $ on $ \IR^d $ such that, for all $ 1 \leq i \leq m $, we have
		
		$(i)$ $ {\widetilde{\nu}}_L(f_i) = n_{(u)}(f_i) < \infty $,
		
		$(ii)$ $ \ini_L(f_i) = \ini_0(f_i) $,
		and 
		
		$(iii)$ $ (\ini_0(f_1), \ldots, \ini_0(f_m) ) $ is a standard basis for $ \Ini_L(J) := \langle \ini_L(g) \mid g \in J \rangle $.
	\end{enumerate}
\end{Def} 

\noindent 
For more details on these objects, we refer to \cite{CJS} section 6.

In order to define the property of a $ ( u ) $-standard basis to be normalized at a vertex, we have to introduce the notation of leading exponents.

\begin{Def}
	\label{Def:leading_exponent}
		Let $ S = k[Y_1, \ldots, Y_r] $ be a polynomial ring.
	\begin{enumerate}
		\item 
		Let $ \varphi  = \sum \lambda_B Y^B \in S $ be a homogeneous element in $ S $.
		The {\em leading exponent of $ {\varphi} $} is defined as 
		$$
			{\exp(\varphi)} := \max_{\mathrm{lex}} \{ B \mid \lambda_B \neq 0 \}.
		$$
		For a homogeneous ideal $ I \subset S $, we set
		$$
			{\exp(I)} := \{ {\exp}(\varphi) \in \IZ^r_{\geq 0} \mid \varphi \in I \mbox{ homogeneous }\}. 
		$$
		
		\smallskip 
		
		\item 
		Consider $ G_1, \ldots, G_m \in S[U_1 \ldots, U_d] $ with
		$$
			G_i = F_i(Y) + \sum P_{i,B}(U) Y^B,
			\ \ \ \
			P_{i,B}(U) \in k[U] \setminus k^\times,
		$$
		where $ F_i(Y) = \sum C_{B,i} Y^B $ is homogeneous of degree $ n_i $, for $ C_{B,i} \in k $.
		We say $ ( F_1, \ldots, F_m ) $ is {\em normalized} 
		if $ C_{B,i } = 0 $ if $ B \in {\exp}(F_1, \ldots, F_{i-1})$, for all $ i \in \{ 1, \ldots, m \} $.
		If we have additionally that $ P_{B,i} (U) \equiv 0 $ if $ B \in {\exp}(F_1, \ldots, F_{i-1})$, for all $ i \in \{ 1, \ldots, m \} $, then $ ( G_1, \ldots, G_m ) $ is called {\em normalized}.
	\end{enumerate}
\end{Def}

\begin{Def}
	\label{Def:nlzd_solv_prep_well-prep}
	Let $ J \subset R $ and $ ( u,y ) $ be as above.
	Let $ ( f ) = (f_1, \ldots, f_m ) $ be a $ ( u ) $-standard basis for $ J $
	 and let $ \x \in \poly fuy $ be a vertex of the projected polyhedron.
	 
	 \begin{enumerate}
	 	\item 
		Let $ g = \sum C_{A,B} \, u^A \, y^B \in \{ f_1, \ldots, f_m \} $.
	The {\em $ \x $-initial form of $ g $ at the vertex $ \x $} 
	(\wrt $ (u,y) $) is defined as 
	$$
	\ini_\x(g) := \ini_\x(g)_{(u,y)} 
	:=
	\ini_0(g)  + \sum_{({\star})}
	\overline{C_{A,B}}\, U^A \, Y^B,
	$$
	where the sum ranges over those exponents contributing to the vertex $ \x $,
	i.e., for which we have 
	$$ 
	\frac{A}{n_{(u)}(g) - |B|} = \x 
	\eqno{({\star})}
	$$
	We say $ (f;u;y) $ is {\em normalized at the vertex $ \x $} 
	if $ (\ini_\x (f_1), \ldots, \ini_\x(f_m)) $ is normalized.
	
	\smallskip 
	
	\item 
	The vertex $ \x \in \poly fuy $ is {called} {\em solvable} 
	if there exist $ \ga_1, \ldots, \ga_r \in k[U] $
	such that, for all $ i \in \{ 1, \ldots, m \} $,
	$$
		\ini_\x (f_i) = F_i (Y_1 + \ga_1, \ldots, Y_r + \ga_r),
		\ \ \ \
		\mbox{ where }
		F_i (Y) := \ini_0 (f_i).
	$$
	The tuple $ \ga = (\ga_1, \ldots, \ga_r) $ is then called a {\em $ v $-solution for $ (f;u;y) $}.
	
	\smallskip 
	
	\item
	The tripe $ (f;u;y) $ is {\em prepared at the vertex $ \x $}
	if $ (f;u;y) $ is normalized at $ \x $ and
	if $ \x $ is not solvable for $ (f;u;y) $.
\end{enumerate}
	The triple $ (f;u;y) $ is {called} {\em well-prepared} 
if $ (f;u;y) $ is prepared at every vertex of $ \poly fuy $.
In this case, we also say that $ (f) $ is a {\em well-prepared $ (u) $-standard basis}.
\end{Def}

For more details on these notions, we refer to \cite{CJS} section 7, or to Hironaka's original article \cite{HiroCharPoly}, and also to \cite{CSCcompl}, where several examples are discussed.

\begin{Rk}
	In the original paper \cite{CosRevista} 
	the difficulty of considering well-prepared $ ( u ) $-standard bases
	{does not appear, since the article is restricted to characteristic zero}. 
	In this case there exists the notion of ``donn\'ee distingu\'ee"
	$(f;u;y) $ that has all required properties.  
	(loc.~cit.~B.2.1, see also \cite{GiraudCharZero} Proposition 3.7 and \cite{HiroMero} p.~121 (6.5.1)--(6.5.6))
	In general, the latter do not exist.
\end{Rk}

In Lemma \ref{Lem:Preparedness_stable} below, we discuss the transformation of the projected polyhedron $ \poly fuy $ under a sequence of local blowing-ups that {are} defined by monomial map{s},
which is an immediate consequence of \cite{CJS} Lemma 9.3.
For this, let us recall:
Let $ J \subset R $ be a non-zero ideal and let $ (u,y) $ be a regular system of parameters for $ R $ such that \eqref{eq:cond_Dir(J')} holds.
The blowing-up of $ R $ in the ideal $ \langle u_1, \ldots, u_s , y \rangle  $, $ s \geq 1 $, is covered by the $ r + s $ standard charts. 
For example, if $ R \to S_1 $ is the map to the $ u_1 $-chart then
$$
	 S_1  = R \left[  \frac{u_2}{u_1}, \ldots , \frac{u_s}{u_1}, \frac{y_1}{u_1}, \ldots, \frac{y_r}{u_1}\right]
$$ 
and the map is given by the inclusion $ R \subset S_1 $.
At the origin of the $ u_1 $-chart the elements
$$ 	
	 (u',y') := \left( u_1, \frac{u_2}{u_1}, \ldots , \frac{u_s}{u_1}, u_{s+1} , \ldots, u_d, \frac{y_1}{u_1}, \ldots, \frac{y_r}{u_1} \right) 
$$
{form} a regular system of parameters.
We denote by $ R_1 $ the localization of $ S_1 $ at the maximal ideal $ \langle u', y' \rangle $.
Hence, we obtain the following local blowing-up, which is defined by a monomial map,
$$	
	\begin{array}{ccll}
		R & \longrightarrow & R_1 
		\\[3pt] 
		y_j & \mapsto & u_1 y_j', & 1 \leq j \leq r 
		\\
		u_1 & \mapsto & u_1,
		\\ 
		u_i & \mapsto & u_1 u_i', & 2 \leq i \leq s ,
		\\ 
		u_i & \mapsto & u_i, & s+1 \leq i \leq d.
	\end{array}
$$

Let $ f \in R $ with finite expansion 
$ f = \sum C_{A,B} \, u^A \, y^B $ with $ C_{A,B} \in R^\times \cup \{ 0\} $.
Suppose $ n := \ord_{M'}(f') = \ord_M(f) < \infty $ and 
$ f \in \langle u_1, \ldots, u_s, y \rangle^n $.
Then {the monomials appearing in the expansion of the strict transform of $ f $ in $ R_1 $ are given by}
$$
	u^A y^B = u_1^{A_1} \cdots u_s^{A_s} \, u_{s+1}^{A_{s+1}} \cdots u_d^{A_d} y^B
	\mapsto 
	u_1^{A_1 + \ldots + A_s + |B| - n} \, {u_2'}^{A_2} \cdots {u_s'}^{A_s} \, u_{s+1}^{A_{s+1}} \cdots u_d^{A_d} {y'}^B
	.
$$
Note that the image of $ C_{A,B} $ in $ R_1 $ remains a unit or zero.
Since $ u^A y^B $ corresponds to the point $ \frac{A}{n-|B|} \in \IR^d $ in $ \poly fuy $, this leads to a map 
$$
	 \begin{array}{rcl}
		 \Psi : \IR^d & \longrightarrow & \IR^d
		 \\
		 \x = (\x_1, \ldots, \x_s, \x_{s+1}, \ldots, \x_d)
		 & \mapsto &
		 \psi(\x) := (\x_1 + \ldots + \x_s - 1 , \x_2, \ldots, \x_s, \x_{s+1}, \ldots, \x_d) .
	 \end{array}
$$

\begin{center}
	\begin{tikzpicture}[scale=0.7]

	
	\draw (0.75,3)  coordinate (x);
	\draw (2.75,3)  coordinate (px0);
	\draw (3.75,3)  coordinate (sx);
	
	\node[above] at (1,3) {$ (\x_1, \x_2) $};
	\node[above] at (4.4,3) {$ (\x_1 + \x_2, \x_2) $};
	\node[below] at (2.75,-0.1) {$ \x_1 + \x_2-1 $};
	
	\draw[->] (0,0) -- (4,0) node[right]{$e_1$};
	\draw[->] (0,0) -- (0,4) node[above]{$e_2$};
	
	\draw[dashed, thick] (x) -- (3.75,0) -- (sx) -- (px0);
	\draw[dotted, thick] (px0) -- (2.75, -0.15);
	
	\draw[fill = lightgray] (x) circle [radius=0.09];
	\draw[fill = black] (px0) circle [radius=0.09];
	\draw[fill = white] (sx) circle [radius=0.09];
	
	\draw[|->,very thick] (5.5,2) -- (6.5,2);
	
	%
	%
	%
	%
	%
	
	\draw (8.75,3)  coordinate (x2);
	\draw (10.75,3)  coordinate (px);
	\draw (11.75,3)  coordinate (sx2);

	\node[above] at (px) {$ \Psi(\x) $};
	
	\draw[->] (8,0) -- (12,0) node[right]{$e_1$};
	\draw[->] (8,0) -- (8,4) node[above]{$e_2$};

	\draw[dotted, thick] (x2) -- (11.75,0) -- (sx2) -- (px);
	
	\draw[fill = lightgray] (x2) circle [radius=0.09];
	\draw[fill = black] (px) circle [radius=0.09];
	\draw[fill = white] (sx2) circle [radius=0.09];
\end{tikzpicture}

{\bf Figure 1:} Illustration of $ \Psi $ for $ d = s = 2 $ and $ (\x_1, \x_2) = (0.75,3)$.
\end{center}

\begin{Def}[\cite{CJS} Definition 2.1]
	\label{Def:permissible}
Given a locally Noetherian scheme $ X $, a reduced closed subscheme $ D \subset X $ is called {\em permissible} for $ X $ at $ x \in D $,
if $ D $ is regular at $ x $, $ D $ does not contain an irreducible component of $ X $, and if $ X $ is normally flat along $ D $ at $ x $.

We say $ D \subset X $ is permissible (or, a {\em permissible center}) for $ X $ if it is permissible at every $ x \in D $.
A sequence of blowing-ups is called permissible for $ X $ if each center is permissible for the respective strict transform of $ X $. 	
\end{Def} 

\begin{Prop}[\cite{CJS} Theorem 2.2(2)]
	\label{Prop:normal_flatness_and_generators}
If $ X = V(J) \subset \Spec(R) $ and $ D = V(\mathfrak{p}) $ for some prime ideal $ \mathfrak{p} \subset R $, 
then the normal flatness of $ X $ along $ D $ at the origin of $ \Spec(R) $ is equivalent to the existence of a standard basis $ (f) = (f_1, \ldots, f_m ) $ for $ J $ such that $ f_i \in \mathfrak{p}^{n_i} $ , for $ n_i := \ord_M(f_i) $.
\end{Prop}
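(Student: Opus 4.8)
The statement is a classical criterion of Hironaka, and the plan is to reduce it to a flatness statement about a single graded ring and then to read off the standard basis from the local criterion of flatness. Set $ A := R/J $ and let $ \bar{\mathfrak{p}} $ be the image of $ \mathfrak{p} $ in $ A $; since $ D \subset X $ we have $ J \subset \mathfrak{p} $, and $ \cO_D = R/\mathfrak{p} =: P $. As the intended centre is regular (as it is for a permissible blowing-up), $ \mathfrak{p} = \langle y_1, \ldots, y_r \rangle $ is generated by part of a regular system of parameters, so $ P $ is a regular local ring. By definition, $ X $ is normally flat along $ D $ at the origin if and only if the associated graded ring $ \gr_{\bar{\mathfrak{p}}}(A) = \bigoplus_{n \geq 0} \bar{\mathfrak{p}}^n/\bar{\mathfrak{p}}^{n+1} $ is a flat $ P $-module. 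First I would record the canonical identification
$$
\gr_{\bar{\mathfrak{p}}}(A) \;\cong\; \gr_{\mathfrak{p}}(R)\big/\Ini_{\mathfrak{p}}(J),
$$
where $ \gr_{\mathfrak{p}}(R) = P[Y_1, \ldots, Y_r] $ is a polynomial ring over $ P $ and $ \Ini_{\mathfrak{p}}(J) = \bigoplus_n \big((J \cap \mathfrak{p}^n) + \mathfrak{p}^{n+1}\big)/\mathfrak{p}^{n+1} $ is the ideal of $ \mathfrak{p} $-adic initial forms of $ J $; this follows from the modular law applied to $ \mathfrak{p}^n \cap (\mathfrak{p}^{n+1} + J) $.

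Next I would apply the local criterion of flatness in its graded form. Since $ \gr_{\bar{\mathfrak{p}}}(A) $ is a finitely generated graded $ \gr_{\mathfrak{p}}(R) $-module whose homogeneous pieces are finitely generated over the Noetherian local ring $ P $, flatness over $ P $ is equivalent to each $ (\gr_{\bar{\mathfrak{p}}}(A))_n $ being $ P $-free. As $ P $ is a local domain with residue field $ k $ and fraction field $ \kappa(\mathfrak{p}) $, a finitely generated module is free exactly when its generic and special ranks agree; summing over $ n $, normal flatness is therefore equivalent to the equality of the Hilbert functions of the special fibre $ \gr_{\bar{\mathfrak{p}}}(A) \otimes_P k $ and the generic fibre $ \gr_{\bar{\mathfrak{p}}}(A) \otimes_P \kappa(\mathfrak{p}) $. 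This is the Hilbert--Samuel characterization of normal flatness going back to Bennett and Hironaka, and it isolates exactly what a generating system of $ J $ must control.

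For the implication ``such a standard basis exists $ \Rightarrow $ normal flatness'', I would start from a standard basis $ (f_1, \ldots, f_m) $ with $ f_i \in \mathfrak{p}^{n_i} $ and $ n_i = \ord_M(f_i) $. Because $ \ord_{\mathfrak{p}} \leq \ord_M $ always holds, the hypothesis forces $ \ord_{\mathfrak{p}}(f_i) = n_i $, so each $ f_i $ has a well-defined $ \mathfrak{p} $-initial form $ \Ini_{\mathfrak{p}}(f_i) \in P[Y]_{n_i} $; since $ f_i \in \langle y \rangle^{n_i} $, its reduction modulo $ \bar{M} = M/\mathfrak{p} $ is precisely the tangent-cone form $ \ini_M(f_i) \in k[Y]_{n_i} $. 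I would then verify that the $ \Ini_{\mathfrak{p}}(f_i) $ generate $ \Ini_{\mathfrak{p}}(J) $ and that, because $ (\ini_M(f_i))_i $ is a standard basis of $ \Ini_M(J) $, the set of leading exponents is the same for the special and the generic fibre: the leading coefficients lie in $ P $ and survive both specializations $ P \to k $ and $ P \to \kappa(\mathfrak{p}) $, so the two Hilbert functions coincide and every graded piece is free.

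The converse is the step I expect to be the main obstacle. Starting from an arbitrary standard basis of $ J $ with respect to $ M $, the chosen generators need not lie in $ \mathfrak{p}^{n_i} $, and the task is to raise their $ \mathfrak{p} $-order up to their $ M $-order without destroying the standard-basis property. Normal flatness is what creates the room: the equality of Hilbert functions forces $ \Ini_{\mathfrak{p}}(J) $ to admit homogeneous generators of the expected degrees $ n_i $ reducing to the $ \ini_M(f_i) $, and a successive lifting and normalization---subtracting from each $ f_i $ suitable combinations of the remaining generators and of elements of strictly larger $ \mathfrak{p} $-order---produces a standard basis with $ \ord_{\mathfrak{p}} = \ord_M $. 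The delicate bookkeeping is to confirm at every step that the $ M $-adic initial forms are unchanged, so that the family stays a standard basis while its $ \mathfrak{p} $-orders climb to the required values.
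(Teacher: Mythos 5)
The paper does not prove this proposition at all: it is imported verbatim from \cite{CJS}, Theorem 2.2(2), and used as a black box, so there is no in-paper argument to compare yours against. Your strategy is the classical Bennett--Hironaka route (normal flatness $\Leftrightarrow$ flatness of $\gr_{\bar{\mathfrak{p}}}(A)$ over $R/\mathfrak{p}$ $\Leftrightarrow$ equality of the Hilbert functions of the special and generic fibres, read off via the identification $\gr_{\bar{\mathfrak{p}}}(A)\cong \gr_{\mathfrak{p}}(R)/\Ini_{\mathfrak{p}}(J)$), which is indeed how this is proved in \cite{CJS} and in Hironaka's original work. One hypothesis you add silently: the statement as printed allows an arbitrary prime $\mathfrak{p}$, while your argument takes $\mathfrak{p}=\langle y_1,\dots,y_r\rangle$ generated by part of a regular system of parameters. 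The cited CJS theorem does assume $D$ regular, and that is the only case the paper uses, so this is harmless --- but it should be stated as an assumption rather than inferred from ``the intended centre is regular''.

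As a proof, however, the proposal has two genuine gaps. In the forward direction, the assertion that the $\Ini_{\mathfrak{p}}(f_i)$ generate $\Ini_{\mathfrak{p}}(J)$ and that the leading-exponent sets of the special and generic fibres of $\gr_{\mathfrak{p}}(R)/\Ini_{\mathfrak{p}}(J)$ coincide is exactly where the standard-basis hypothesis must do its work; it is announced (``I would then verify\dots'') but not carried out, and it is not a formality --- it is Hironaka's computation relating the numerical exponents of $\Ini_M(J)$ to those of $\Ini_{\mathfrak{p}}(J)\otimes k$ and $\Ini_{\mathfrak{p}}(J)\otimes\kappa(\mathfrak{p})$. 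In the converse direction, which you yourself identify as the main obstacle, the ``successive lifting and normalization'' is only a sketch: nothing is said about why the subtraction process terminates (i.e., why the $\mathfrak{p}$-orders actually reach $n_i$ rather than stabilizing below), nor is it verified that each correction preserves the $M$-adic initial forms so that the family remains a standard basis. That termination-plus-preservation argument is the substantive content of the equivalence; without it the proof is an outline of the right method rather than a proof.
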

	

\begin{Def}
	\label{Def:near}
		Let $ (R,M, k) $ be a regular local ring 
		and $ J \subset R $ be a non-zero ideal.
		Let $ (u) = ( u_1, \ldots, u_d ) $ be a system of elements in $ R $ such that \eqref{eq:cond_Dir(J')} holds.
		Let $ ( f ) $ be a standard basis for $ J $ and 
		let $ ( y) $ be elements extending $ (u) $ to a regular system of parameters $ (s) = (s_1, \ldots, s_q) =(u,y) $
		such that
		$
			\poly fuy = \cpoly Ju.
		$

		Set $ n_i := \ord_M(f_i)$ and $ X := \Spec (R/J) \subset Z := \Spec(R) $. 
		Let $ x \in X $ be the origin and $ \pi : Z' \to Z $ be a sequence of blowing-ups that is permissible for $ X $.
		Let $ x' \in \pi^{-1}(x) \subset Z' $ be a point lying above $ x $. 
		We denote by $ J' $ the strict transform of $ J $ in $ (R'  := \cO_{Z',x'}, M') $.
		The point $ x' $ is called {\em near to $  x $} if there exists a standard basis $ (g) = (g_1, \ldots, g_m) $ for $ J' $ such that $ \ord_{M'} (g_i) = n_i $ for all $ i \in \{ 1, \ldots, m\} $. 
	
\end{Def}

Note: This is not the original definition of a near point, as for example in \cite{CJS} Definition~2.13
	(which initially is only for a single blowing-up, but can be easily extend to a sequence).
	By \cite{CJS} Theorem 2.10 (5), (6) and (2), it follows that the above,
	which, apparently depends on choices of $(f)$ and $(s)$, is an equivalent definition. (For the reader's convenience, we point out that loc.~cit.~Definitions 1.26, 1.17, 1.1 are useful for understanding the statement of the cited theorem).

\begin{Lem}
		\label{Lem:Preparedness_stable}
		Let $ J \subset R $ and $ (u,y) $ be as before. 
		Suppose $ D = V(u_1, \ldots, u_s, y) $ is permissible for $ X = V(J) $.
		Let $ (f ) = (f_1, \ldots, f_m) $ be a standard basis for $ J $ such that $ f_i \in \langle u_1, \ldots, u_s, y\rangle^{n_i} $ for $ n_i := \ord_M (f_i) $
		and assume that $ \ord_{M\cdot R/\langle u \rangle} (f_i \cdot R/\langle u \rangle ) = \ord_{M} (f_i) $.
		
		Let $ ( f' )  = (f_1' , \ldots, f_m' ) \in R_1^m $ be the strict transforms of $ (f) $ at the origin of the $ u_1 $-chart of the blowing-up of $ R $ in $ \langle u_1, \ldots, u_s , y \rangle  $, $ s \geq 1 $.
		
		\begin{enumerate}
			\item 
			$ \poly{f'}{u'}{y'} $ is the smallest convex subset $ \Delta $ of $ \IR^d $ containing $ \Psi(\poly fuy ) $ and fulfilling $ \Delta + \IR^d_{\geq 0} = \Delta $ 
			with the convention $\Psi(\varnothing)=\varnothing$ in the case $\poly fuy=\varnothing$.
			
			\smallskip 
			
			\item
			For every vertex $ \x \in \poly{f'}{u'}{y'} $ there exists a vertex $ \x_0  \in \poly fuy $ mapping to $ \x $ under $ \Psi $.
			
			\smallskip 
			 
			\item
			Fix $ \x $ and $ \x_0 $ with $ \Psi(\x_0) = \x $.
			Then $ (f';u';y') $ is prepared at $ \x $ if and only if $(f;u;y)$ is prepared at $ \x_0 $.
		\end{enumerate}
	
		Let $ R \to R_1 \to \ldots \to R_a $ be a sequence of local blowing-ups of the above type, i.e.:
			for $ 1 \leq b \leq a - 1 $,
			if $ (u^{(b)}, y^{(b)}) $ are the parameters in $ R_b $, then
			$ R_{b+1} $ is the $ u_{i_0}^{(b)} $-chart of the blowing-up of $ R_b $ in $ \langle u_I^{(b)}, y^{(b)} \rangle $,			
			for $ i_0 \in I $ and $ (u_I^{(b)}) := (u_i^{(b)})_{i \in I } $ and some $ I \subset \{ 1, \ldots, d \} $.
			Furthermore, we require that the origins $ x_b \in \Spec(R_b) $ are near to the origin of  $ \Spec(R) $ and that each blowing-up $ R_b \to R_{b+1}$ is permissible at $ x_b $, 
			for $ 1 \leq b \leq a - 1 $.		
			(Note: the parameters in $R_{b+1}$ are chosen in the natural way described above).

			Then, taking the compositions of the functions $\Psi$, we get the same statements for the transformations of the characteristic polyhedra.
\end{Lem}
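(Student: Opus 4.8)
The plan is to argue by induction on the length $a$ of the sequence, using the single-blowing-up statements (1)--(3) both as the base case and as the tool for the inductive step. Write $R_0 := R$, let $\Psi_b$ denote the affine map attached to the blowing-up $R_{b-1} \to R_b$ (so that $\Psi_b$ is built from the center $\langle u_I^{(b-1)}, y^{(b-1)} \rangle$ in $R_{b-1}$), and set $\Phi_b := \Psi_b \circ \ldots \circ \Psi_1$. I would assume the three assertions for the truncated sequence $R_0 \to \ldots \to R_{a-1}$, with $\Phi_{a-1}$ playing the role of $\Psi$, and then splice in the final step $R_{a-1} \to R_a$ by one more application of the single-step lemma.

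The crucial point is to verify that the hypotheses of the single-blowing-up version are again met at the stage $R_{a-1}$, so that it may legitimately be applied there. Permissibility of the center $V(u_I^{(a-1)}, y^{(a-1)})$ at $x_{a-1}$ is assumed outright. What remains is to check that the strict transform $(f^{(a-1)})$ of $(f)$ at $x_{a-1}$ is a standard basis for the strict transform ideal $J_{a-1}$ with $\ord_{M_{a-1}}(f_i^{(a-1)}) = n_i$, that $f_i^{(a-1)} \in \langle u_I^{(a-1)}, y^{(a-1)} \rangle^{n_i}$, and that the orders of $f_i^{(a-1)}$ in $R_{a-1}$ and in $R_{a-1}/\langle u^{(a-1)} \rangle$ agree. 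The preservation of the orders $n_i$ is exactly what nearness of $x_{a-1}$ to the origin of $\Spec(R)$ encodes (Definition \ref{Def:near} and the equivalence recorded after it, resting on \cite{CJS} Theorem 2.10), while the containment in the $n_i$-th power of the center follows from permissibility via Proposition \ref{Prop:normal_flatness_and_generators}. I expect this verification to be the main obstacle: it is here that the geometric input, namely normal flatness together with the intrinsic nature of the initial ideal, must be combined with the fact that it is the strict transform of our chosen standard basis --- and not merely some standard basis produced by normal flatness --- that inherits the required properties.

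Granting the hypotheses, statement (1) for the single step $R_{a-1} \to R_a$ identifies $\poly{f^{(a)}}{u^{(a)}}{y^{(a)}}$ with the smallest convex, $\IR^d_{\geq 0}$-stable set containing $\Psi_a(\poly{f^{(a-1)}}{u^{(a-1)}}{y^{(a-1)}})$; by the inductive hypothesis the latter polyhedron is in turn the smallest such set containing $\Phi_{a-1}(\poly fuy)$. To merge these two descriptions I would use that each $\Psi_b$ is affine with linear part carrying $\IR^d_{\geq 0}$ into itself, so that $\Psi_b(\Delta + \IR^d_{\geq 0}) \subseteq \Psi_b(\Delta) + \IR^d_{\geq 0}$ and images of convex sets stay convex. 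Consequently the operation ``smallest convex $\IR^d_{\geq 0}$-stable set containing'' commutes with the replacement of $\Phi_{a-1}$ by $\Psi_a \circ \Phi_{a-1} = \Phi_a$, which yields (1); this step is routine.

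Statements (2) and (3) then follow by chaining. For (2), given a vertex $\x_a$ of $\poly{f^{(a)}}{u^{(a)}}{y^{(a)}}$, the single-step statement (2) yields a vertex $\x_{a-1}$ of $\poly{f^{(a-1)}}{u^{(a-1)}}{y^{(a-1)}}$ with $\Psi_a(\x_{a-1}) = \x_a$, and applying the inductive hypothesis to $\x_{a-1}$ yields a vertex $\x_0$ of $\poly fuy$ with $\Phi_{a-1}(\x_0) = \x_{a-1}$; hence $\Phi_a(\x_0) = \x_a$. For (3), I would combine the single-step equivalence stating that $(f^{(a)}; u^{(a)}; y^{(a)})$ is prepared at $\x_a$ if and only if $(f^{(a-1)}; u^{(a-1)}; y^{(a-1)})$ is prepared at $\x_{a-1}$, with the inductive equivalence relating preparedness at $\x_{a-1}$ to preparedness of $(f;u;y)$ at $\x_0$, for the compatible choice of vertices just produced. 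Transitivity of these equivalences gives the preparedness assertion for the whole sequence and closes the induction.
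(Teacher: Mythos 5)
Your proposal is correct and follows essentially the same route as the paper: the paper establishes the single--blowing-up case by the construction of $\Psi$ together with \cite{CJS} Lemma 9.3 (1) and (3) (plus the remark that the residue field is unchanged), and then declares the extension to a sequence ``immediate'' --- which is precisely the induction you carry out, with the verification via nearness and Proposition \ref{Prop:normal_flatness_and_generators} that the single-step hypotheses persist at each stage being the content the paper leaves implicit. Note only that, like the paper, you do not prove parts (1)--(3) for one blowing-up from scratch but take them as the cited input.
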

	
The result for $ R \to R_1 $ follow from the construction of the map $ \Psi $ and by \cite{CJS} Lemma 9.3 (1) and (3). 
Let us point out that the residue field does not change when passing from $ R $ to $ R_1 $. 
The extension of the result to a sequence of blowing-ups of such type is immediate.

\begin{Rk} 
	\begin{enumerate}
		\item 
		While every vertex of $ \poly{f'}{u'}{y'} $ corresponds to one of $ \poly fuy $, not every vertex of $ \poly fuy $ is mapped to a vertex of $ \poly{f'}{u'}{y'} $ under $ \Psi $.
		For example, consider $ f = y^2 + u_1^5 + u_2^7 $
		in any regular local ring with parameters $ (u_1,u_2,y) $.
		After blowing up the origin, the strict transform of $ f $ at the origin of the $ u_1 $-chart is 
		$ f' = y'^2  + u_1^3 + u_1^5 {u'_2}^7 $.
		We observe that $ \poly fuy $ has two vertex, while $ \poly{f'}{u'}{y'} $  has only one.

		\smallskip 
		
		\item 
		Note that we can also apply Lemma \ref{Lem:Preparedness_stable} for $ R_0 := R[t] $, 
		where $ t $ is an independent variable
		(e.g., for the blowing of $ R_0 $ in $ \langle t, u, y \rangle $ and the origin of the $ t $-chart).
		In Observation \ref{Obs:transform_Delta_under_S}, we use this result for a particular sequence of blowing-ups.
\end{enumerate} 
\end{Rk}

\begin{center}
	\begin{tikzpicture}[scale=0.7]
	
	\draw (0.5,4.5)  coordinate (v1);
	\draw (5,4.5)  coordinate (sv1);
	\draw (0.85,3)  coordinate (v2);
	\draw (3.85,3)  coordinate (sv2);
	\draw (2.5,1.35)  coordinate (v3);
	\draw (3.85,1.35)  coordinate (sv3); 
	\draw (5.7,0.5)  coordinate (v4); 
	\draw (6.2,0.5)  coordinate (sv4);

	\path[fill=verysoftgray] (0.5, 5) -- (v1) -- (v2) -- (v3) -- (v4) -- (7,0.5) -- (7,5) -- (0.5,5);
	\path[fill=softgray] (3.85, 5) -- (sv3) -- (sv4) -- (7,0.5) -- (7,5) -- (0.5,5);
	
	\draw[->] (0,0) -- (7,0) node[right]{$e_1$};
	\draw[->] (0,0) -- (0,5) node[above]{$e_2$};
	
	\draw[thick] (0.5, 5) -- (v1) -- (v2) -- (v3) -- (v4) -- (7,0.5);
	\draw (sv1) -- (sv2) -- (sv3) -- (sv4);
	\draw[dashed, thick] (3.85, 5) -- (sv3) -- (sv4) -- (7,0.5);

	\draw[dotted, thick] (v1) -- (5,0) -- (sv1);
	\draw[dotted, thick] (v3) -- (3.85,0) -- (sv2);
	\draw[dotted, thick] (v4) -- (6.2,0) -- (sv4);
	
	\draw[fill = white] (v1) circle [radius=0.08];
	\draw[fill = white] (sv1) circle [radius=0.08];
	\draw[fill = softgray] (v2) circle [radius=0.08];
	\draw[fill = softgray] (sv2) circle [radius=0.08];
	\draw[fill = gray] (v3) circle [radius=0.08];
	\draw[fill = gray] (sv3) circle [radius=0.08];
	\draw[fill = black] (v4) circle [radius=0.08];
	\draw[fill = black] (sv4) circle [radius=0.08];

	\end{tikzpicture}  
	
	{\bf Figure 2:} Illustration of the transform of a polyhedron $ \Delta $ (in light gray) 
	under $ \Psi $ ($ d = s = 2 $);
	the white vertex becomes a point inside of
	$ \Delta' := \Psi( \Delta) + \IR^d_{\geq 0 } $ (in gray: $ \Delta' + (1,0) $).
\end{center}

\bigskip

\section{Linear Forms and Combinatorial Sequences of Blowing Ups}

In this section we recall notations and results of \cite{CosRevista} section {\bf A.1}--{\bf A.4} that we need in the following.

\smallskip 

Let $ L : \IR^q \to \IR $ be a positive linear form on $ \IR^q $,  
$$ 
	L(\x_1, \ldots, \x_q) =a_1 \x_1 + \ldots + a_q \x_q ,
$$
for $ a_1, \ldots, a_{q} \in \IQ_+ $ positive rational numbers.
Without loss of generality, we assume that 
\begin{equation}
\label{eq:ass_L_nzld_int}
	a_1 \leq a_2 \leq \ldots \leq a_q.
\end{equation}

\begin{Obs}
	[\em Barycentric decomposition of $ L $]
	\label{Obs:bary}
	Let $  m_1 , \ldots , m_l \in \IQ_+ $, $ l = l(L) $, be the pairwise different values appearing among the coefficients $ a_1, \ldots, a_q $ of $ L $.
	We choose the indices such that they are ordered increasingly, $ m_1 <  \ldots < m_l $.
	Set $ m_0 := 0 $, and 
	$$ 
		I_k := \{ i \in \{1,\ldots, q\} \mid a_i > m_{k-1} \}, 
	$$ 
	$$
		b_k := m_k - m_{k-1},
		\ \ \ 
		\mbox{ for } 1 \leq k \leq l.
	$$
	Since the coefficients $ a_i $ are ordered increasingly, there exist indices $ \al(1), \ldots, \al(l) $ such that
	$
		I_k = \{ \al(k), \ldots, q \},
	$
	($ \al(1) = 1 $)
	and we can rewrite
	$$
	\begin{array}{rl}
		L(\x) =
		& a_1 \x_1 + \ldots a_q \x_q =
		\\[5pt]
		= 
		&
		b_1 ( \x_1 + \ldots + \x_q)
		+ b_2 ( \x_{\al(2)} + \ldots + \x_q)
		+ \ldots 
		+ b_l ( \x_{\al(l)} + \ldots + \x_q).
	\end{array}
	$$
	We have $ b_1 + \ldots + b_k = a_{\al(k)} $, for $ 1 \leq k \leq l $.
	In particular, $ b_1 = a_1 $ and $ b_1 + \ldots + b_l = a_q $.
\end{Obs}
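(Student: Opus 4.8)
The plan is to verify the claimed identity coefficient by coefficient, relying on the fact that ordering the $a_i$ increasingly makes each index set $I_k$ an ``upper interval'' of $\{1, \ldots, q\}$, and then exploiting a telescoping sum. There is essentially one genuine identity to establish (the rewriting of $L$), and the remaining assertions follow as bookkeeping.

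First I would observe that, because $a_1 \leq \ldots \leq a_q$, the set $I_k = \{i \mid a_i > m_{k-1}\}$ is closed upwards: if $a_i > m_{k-1}$ and $j \geq i$, then $a_j \geq a_i > m_{k-1}$, so $j \in I_k$ as well. Hence $I_k = \{\al(k), \ldots, q\}$, where $\al(k)$ is the least index with $a_{\al(k)} > m_{k-1}$; this both defines the $\al(k)$ and gives $\al(1) = 1$ at once, since $m_0 = 0$ and every $a_i > 0$ by positivity of $L$.

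Next I would pin down the value $a_{\al(k)}$. Since the distinct coefficient values are exactly $m_1 < \ldots < m_l$, the values not exceeding $m_{k-1}$ are precisely $m_1, \ldots, m_{k-1}$, so the smallest coefficient strictly larger than $m_{k-1}$ is $m_k$. Therefore $a_{\al(k)} = m_k$, and by the telescoping identity $b_1 + \ldots + b_k = \sum_{j=1}^{k}(m_j - m_{j-1}) = m_k - m_0 = m_k = a_{\al(k)}$, which yields all the equalities $b_1 + \ldots + b_k = a_{\al(k)}$, and in particular $b_1 = a_1$ together with $b_1 + \ldots + b_l = m_l = a_q$.

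Finally I would establish the rewriting of $L$ by comparing the coefficient of each $\x_i$ on both sides. Writing $a_i = m_{j(i)}$ for the appropriate index $j(i)$, the index $i$ lies in $I_k$ precisely when $m_{j(i)} > m_{k-1}$, i.e.\ when $k \leq j(i)$. The term $b_k(\x_{\al(k)} + \ldots + \x_q)$ contributes $b_k$ to the coefficient of $\x_i$ exactly when $i \in I_k$, so the total coefficient of $\x_i$ on the right-hand side is $\sum_{k=1}^{j(i)} b_k = m_{j(i)} = a_i$, again by the telescoping sum. Since this matches the coefficient of $\x_i$ in $L(\x)$ for every $i$, the two linear forms coincide. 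There is no real obstacle here: the only points requiring care are the identification $a_{\al(k)} = m_k$ and the repeated use of the telescoping cancellation, both of which are elementary.
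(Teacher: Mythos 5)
Your verification is correct and complete: the upward-closedness of the $I_k$, the identification $a_{\al(k)} = m_k$, and the telescoping sum $\sum_{j\le k}(m_j - m_{j-1}) = m_k$ are exactly the (implicit) content of the paper's Observation, which is stated without a separate proof precisely because this coefficient-by-coefficient check is the intended argument. Nothing is missing.
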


\begin{Def}
	\label{Def:gr_L,s(R)}
	Let $ (R, M, k = R/M) $ be a regular local ring and let 
	$ (s) = (s_1, \ldots, s_q) $ be a regular system of parameters for $ R $.
	Let $ L : \IR^q \to \IR $ be a positive linear form.
	The pair $ ( L, s ) $ defines a monomial valuation  $ v_{L,s} $ on $ R $ via
	$$
		v_{L,s} ( \lambda s^\x) := L (\x),
	$$
	where $ \lambda \in R \setminus M $ is a unit and $ s^\x = s_1^{\x_1} \cdots s_q^{\x_q} $, for $ \x \in \IZ_{\geq 0}^q $.
	
	We denote by $ \gr_{L,s}(R) $
	the associated graded ring,
	$$
		\gr_{L,s}(R)  = \bigoplus_{a \in \IR} {\mathcal P}_\alpha / {\mathcal P}^+_\alpha,
	$$
	where $ {\mathcal P}_\alpha := \{ g \in R \mid v_{L,s} (g) \geq \alpha \} $ 
	and $ {\mathcal P}^+_\alpha := \{ g \in R \mid v_{L,s} (g) > \alpha \} $.
\end{Def}

Note that $ L $ takes only values in a discrete subset of $ \IR $, and hence $ \{ a \in \IR \mid {\mathcal P}_\alpha / {\mathcal P}^+_\alpha \neq 0 \} $ is a discrete subset of $ \IR $.
Furthermore, Hironaka shows in \cite{HiroCharPoly} section 1 that
$$
	\gr_{L,s} (R) \cong R/M [ \ini_{L,s} (s_1), \ldots, \ini_{L,s} (s_q)],
$$
where $ \ini_{L,s} (s_i) $ denotes  the initial form of $ s_i $ with respect to $ v_{L,s} $.

\medskip 

Given a positive linear form $ L $ with {\em integer} coefficients and fulfilling \eqref{eq:ass_L_nzld_int}, we construct a local combinatorial sequence of blowing-ups from which we later deduce the invariance of the face of $ \cpoly Ju $ corresponding to $ L $.

\begin{Def}
	\label{Def:seq_bu}
	Let $ (R,M, k = R/M) $ be a regular local ring
	and
	let $ L : \IR^q \to \IR $ be a positive linear form with coefficients $ a_1, \ldots, a_q \in \IZ_+ $ fulfilling \eqref{eq:ass_L_nzld_int}.
	We use the notations of Observation \ref{Obs:bary}.
	Let $ t $ be a new independent variable.
	We put
	$$
		R(0,0) := R[t]
	$$
	and, for $ 1 \leq i \leq l $ and $ 1 \leq j \leq b_i $,
	we define
	\begin{equation}
	\label{eq:R(i,j)}
	\left\{ \ \
	\begin{array}{ll}
	R(1,j) := R\left[
	\,
	t, 
	\, 
	\dfrac{s_{1}}{t^{j}},
	\,
	\ldots,
	\,
	\dfrac{s_{q}}{t^{j}}
	\,   
	\right],
	& i = 1,
	\\[8pt]
	R(i,j) := R\left[
	\,
	t, 
	\, 
	\dfrac{s_1}{t^{a_1}},
	\,
	\ldots,
	\,
	\dfrac{s_{\al(i)-1}}{t^{a_{\al(i)-1}}}, 
	\,
	\dfrac{s_{\al(i)}}{t^{a_{\al(i)-1}+j}},
	\,
	\ldots,
	\,
	\dfrac{s_{q}}{t^{a_{\al(i)-1}+j}}
	\,   
	\right],
	& i \geq 2.
	\end{array}
	\right. 
	\end{equation}
	Further, we let $ Z(0,0) := \Spec(R(0,0)) $ and $ Z(i,j) := \Spec(R(i,j)) $, $x(0,0)\in Z(0,0)$ the origin of $Z(0,0)$ 
	(i.e., the closed point of parameters $(t,{s})$) 
	and $x(i,j)\in Z(i,j)$  the origin of $Z(i,j)$
	(i.e., the closed point of parameters $(t, \dfrac{s_1}{t^{a_1}},\ldots,\dfrac{s_{\al(i)-1}}{t^{a_{\al(i)-1}}},\dfrac{s_{\al(i)}}{t^{a_{\al(i)-1}+j}}, \ldots,\dfrac{s_{q}}{t^{a_{\al(i)-1}+j}})$).
\end{Def}

\begin{Obs}
	\label{Obs:seq_bu}
	Let us denote by $ (i,j)_- $ the element preceding $ (i,j) $ in the set 
	$$ 
	\varepsilon := \{ (0,0), (i,j) \mid 1 \leq i \leq l, \ 1 \leq j \leq b_i \} ,
	$$
	where we order the elements of the set with respect to the lexicographical ordering.
	If we consider the blowing-up of $ Z(i,j)_- $ along 
	{the subscheme defined by}
	the ideal 
	\begin{equation}
	\label{eq:center_bu_to_(i,j)}
		 I(i,j)_- := \langle 
		 \,
		 t, 
		 \,
		 \frac{s_{\al(i)}}{t^{a_{\al(i)-1}+j-1}},
		 \,
		 \ldots,
		 \,
		 \frac{s_{q}}{t^{a_{\al(i)-1}+j-1}}
		 \,   
		 \rangle 
		 \subset R(i,j)_-,
	\end{equation}
	we observe that $ Z(i,j) $ is the affine chart that is complementary  to the divisor $ \mbox{div}(t) $.
	Therefore, the above set of rings defines a sequence of local blowing-ups,
	\begin{equation}
	\label{eq:seq_bu_for_ref}
		Z(0,0) \leftarrow 
		\cZ(1,*) \leftarrow 
		\cZ(2,*) \leftarrow 
		\ldots \leftarrow 
		\cZ(l,*),
	\end{equation} 
	$$
	\mbox{where } \   
		\cZ(i,*) \ 
		\mbox{ is an abbreviation for } \ 
		Z(i,1)
		\leftarrow \ldots 
		\leftarrow 
		Z(i,b_i),
		\ \mbox{ for }
		1 \leq i \leq l.	
	$$
	The length of \eqref{eq:seq_bu_for_ref} is $ b_1 + b_2 + \ldots + b_l = a_q$.
\end{Obs}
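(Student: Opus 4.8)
The plan is to verify that each step of the claimed sequence \eqref{eq:seq_bu_for_ref} is genuinely a local blowing-up of the type treated in Lemma~\ref{Lem:Preparedness_stable}, and that the charts and centers match the rings $R(i,j)$ prescribed in Definition~\ref{Def:seq_bu}. I would proceed by tracking a single passage $(i,j)_- \to (i,j)$ at a time, since once the elementary step is understood, the full sequence is just the composition indexed by the lexicographically ordered set $\varepsilon$.

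First I would fix $(i,j) \in \varepsilon$ and write down the regular system of parameters for $R(i,j)_-$ at its origin $x(i,j)_-$. By the construction in \eqref{eq:R(i,j)}, these parameters are $t$ together with the ``rescaled'' variables $s_\nu / t^{a_\nu}$ for $\nu < \al(i)$ and $s_\nu / t^{a_{\al(i)-1}+(j-1)}$ for $\nu \geq \al(i)$; for the boundary cases $j=1$ one must check that the predecessor in $\varepsilon$ supplies exactly the exponent $a_{\al(i)-1}$, i.e. the last chart of the preceding block $\cZ(i-1,*)$ has exponent $b_1+\dots+b_{i-1}=a_{\al(i)-1}$, which is the relation recorded at the end of Observation~\ref{Obs:bary}. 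With these parameters in hand, I would then compute the blowing-up of $Z(i,j)_-$ along the center $V(I(i,j)_-)$ from \eqref{eq:center_bu_to_(i,j)}, which is an ideal generated by $t$ and a subset of the current parameters, exactly the shape $\langle u_1,\dots,u_s,y\rangle$ appearing in Lemma~\ref{Lem:Preparedness_stable}.

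Next I would examine the chart of this blowing-up complementary to $\mathrm{div}(t)$, i.e. the $t$-chart. Dividing each of the generators $s_\nu/t^{a_{\al(i)-1}+(j-1)}$ (for $\nu\geq\al(i)$) by $t$ raises the exponent by one, producing $s_\nu/t^{a_{\al(i)-1}+j}$, while the parameters with $\nu<\al(i)$ are unaffected; the result is precisely the ring $R(i,j)$ of \eqref{eq:R(i,j)}. I would verify that the complementary chart is indeed the origin $x(i,j)$ with the stated parameters, and that $t$ remains a parameter (so the total transform is $\mathrm{div}(t)$ with multiplicity equal to the number of blown-up generators minus one, which is why the strict-transform formula from Lemma~\ref{Lem:Preparedness_stable} applies with the center containing $t$). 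Accumulating these single steps over all $(i,j)\in\varepsilon$ gives the chain \eqref{eq:seq_bu_for_ref}, and the length count $b_1+\dots+b_l=a_q$ is immediate from $|\varepsilon|-1=\sum_i b_i$ together with the final identity $b_1+\dots+b_l=a_q$ of Observation~\ref{Obs:bary}.

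I expect the main obstacle to be purely bookkeeping rather than conceptual: one must be scrupulous about the transition between consecutive blocks $\cZ(i-1,*)$ and $\cZ(i,*)$, because the index $\al(i)$ jumps and the set of variables being rescaled enlarges (the generators of the center $I(i,j)_-$ differ at the block boundaries). Concretely, the delicate point is checking that the last chart $Z(i-1,b_{i-1})$ already has the parameters $s_\nu/t^{a_{\al(i-1)-1}+b_{i-1}} = s_\nu/t^{a_{\al(i-1)}}$ for the indices $\nu \ge \al(i-1)$, and that among these the ones with $\al(i-1)\le\nu<\al(i)$ now carry the ``stabilized'' exponent $a_\nu=a_{\al(i-1)}$ so that they are no longer scaled in the next block, exactly matching the first line versus second line of \eqref{eq:R(i,j)}. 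Once this matching of exponents at the block seams is confirmed, the rest follows formally from the definition of a monomial blowing-up and from the explicit chart description, so no substantial estimate or geometric input beyond Lemma~\ref{Lem:Preparedness_stable} is needed.
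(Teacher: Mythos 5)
Your verification is correct and follows exactly the route the paper intends: the Observation is justified by direct computation from Definition~\ref{Def:seq_bu} and Observation~\ref{Obs:bary}, and you carry out precisely that bookkeeping, including the one genuinely delicate point (the block seams, where $a_{\al(i-1)-1}+b_{i-1}=a_{\al(i-1)}=a_{\al(i)-1}$ guarantees that the center $I(i,j)_-$ is generated by $t$ and actual parameters of $R(i,j)_-$ and that the $t$-chart reproduces $R(i,j)$). The only imprecision is your parenthetical about the ``multiplicity'' of the total transform, which is not load-bearing; the length count $\sum_i b_i=a_q$ is correct.
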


In order to formulate the following result in a compact way, we introduce the following notations, for given $ ( i, j ) \in \varepsilon $, $ ( i,j ) \neq (0,0 ) $,
$$
\begin{array}{rl}
	R_0 := R(i,j)_-\ ,
	& R_+ := R(i,j), \\[5pt]
	(s^-) := \left( \dfrac{s_1}{t^{a_1}},
	\dfrac{s_2}{t^{a_2}},
	\ldots,
	\dfrac{s_{\al(i)-1}}{t^{a_{\al(i)-1}}}, 
	\right),
	\\[5pt]
	(s^+) := \left( \dfrac{s_{\al(i)}}{t^{a_{\al(i)-1}+j-1}},
	\ldots,
	\dfrac{s_{q}}{t^{a_{\al(i)-1}+j-1}} \right),
	&
	(s^*) := \left( \dfrac{s^+}{t} \right) = 
	\left( \dfrac{s_{\al(i)}}{t^{a_{\al(i)-1}+j}},
	\ldots,
	\dfrac{s_{q}}{t^{a_{\al(i)-1}+j}} \right).
\end{array} 
$$
Using this, $ R_0 = R [t, s^-, s^+] $ and $ R_+ = R_0[\frac{s^+}{t}] = R [t, s^-, s^*] $.

\begin{Lem}
	\label{Lem:R(i,j)/t_polyring}
	For $ (i,j) \in \varepsilon $, $ (i,j) \neq (0,0) $, 
	the quotient $ R(i,j) / \langle t \rangle $ is a polynomial ring over the residue field $ k =R/M $.
	More precisely, the map 
	$$
	\begin{array}{rcl}
			\varphi : R(i,j) & \longrightarrow & 	k[ S^-, S^+]\\[7pt]
			a & \mapsto  & a \mod M \in k, \ \ \mbox{for } a \in R
			\\[5pt]
			s^-_\alpha & \mapsto & S^-_\alpha  := S^-_{\alpha, i, j}  := s^-_\alpha \mod t \\[5pt]
			s^*_\beta & \mapsto & S^*_\beta := S^*_{\beta, i , j} :=  s^*_\beta \mod t\\[5pt]
			t & \mapsto & 0
	\end{array}
	$$
	induces an isomorphism
	$
		R(i,j) / \langle t \rangle  
		\, \cong \,
		k[ S^-, S^*].
	$
\end{Lem}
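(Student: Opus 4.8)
The plan is to construct the asserted homomorphism, check surjectivity cheaply, and then reduce the only substantial point---injectivity---to a Krull dimension count performed at the origin $x(i,j)$.

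First I would record that the original parameters die modulo $t$. By the definition of $R(i,j)$ in \eqref{eq:R(i,j)} we have $s_\al = t^{a_\al}\,s^-_\al$ for $\al<\al(i)$ and $s_\be = t^{\,a_{\al(i)-1}+j}\,s^*_\be$ for $\be\geq\al(i)$, and since all the exponents $a_\al$ and $a_{\al(i)-1}+j$ are $\geq 1$, every $s_\ga$ lies in $\langle t\rangle$. Hence $M\cdot R(i,j)\subseteq\langle t\rangle$, so the composite $R\to R(i,j)\to R(i,j)/\langle t\rangle$ annihilates $M$ and factors through $k=R/M$. As $R(i,j)=R[t,s^-,s^*]$, the quotient $R(i,j)/\langle t\rangle$ is therefore generated as a $k$-algebra by the residues $S^-_\al,S^*_\be$ of $s^-_\al,s^*_\be$. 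This gives a surjective $k$-algebra homomorphism
\[
	\psi\colon k[S^-,S^*]\longrightarrow R(i,j)/\langle t\rangle
\]
from the polynomial ring in the variables $S^-_\al,S^*_\be$, and the map $\varphi$ of the statement is exactly the projection $R(i,j)\to R(i,j)/\langle t\rangle$ composed with $\psi^{-1}$. Thus everything reduces to showing that $\psi$ is injective.

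Injectivity amounts to the algebraic independence of the residues $S^-,S^*$ over $k$, and I would establish it by comparing dimensions. The number of variables is $(\al(i)-1)+(q-\al(i)+1)=q$, so $k[S^-,S^*]$ is a domain of Krull dimension $q$; being a quotient of it, $R(i,j)/\langle t\rangle$ has dimension $\leq q$, and $\ker\psi=\fp$ is a prime with $\dim\bigl(k[S^-,S^*]/\fp\bigr)=\dim\bigl(R(i,j)/\langle t\rangle\bigr)$. For the reverse inequality I would localize at the origin: by Definition \ref{Def:seq_bu} the local ring $\cO:=R(i,j)_{x(i,j)}$ is regular of dimension $q+1$ with regular system of parameters $(t,s^-,s^*)$, so $t$ is a regular parameter and $\cO/\langle t\rangle$ is regular local of dimension $q$. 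Since $\cO/\langle t\rangle$ is a localization of $R(i,j)/\langle t\rangle$, this yields $\dim\bigl(R(i,j)/\langle t\rangle\bigr)\geq q$. Combining the two bounds gives $\dim\bigl(k[S^-,S^*]/\fp\bigr)=q=\dim k[S^-,S^*]$; as $k[S^-,S^*]$ is a polynomial ring over a field, the dimension formula forces $\operatorname{ht}\fp=0$, hence $\fp=0$ and $\psi$ is an isomorphism.

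The hard part is precisely this injectivity step: the easy reductions only present $R(i,j)/\langle t\rangle$ as a quotient of a polynomial ring, and one must exclude hidden relations among the $S^-,S^*$. The decisive input is the regularity of the chart, which comes from the fact recorded in Observation \ref{Obs:seq_bu} that the whole sequence \eqref{eq:seq_bu_for_ref} consists of blowing-ups along centers generated by part of a regular system of parameters; this is what guarantees that $(t,s^-,s^*)$ genuinely is a regular system of parameters at $x(i,j)$ and so pins $\dim\bigl(R(i,j)/\langle t\rangle\bigr)$ from below. As an alternative I could argue inductively along the sequence, using that in each standard chart the blow-up of a regular scheme along a regular center has, modulo the exceptional equation $t$, coordinate ring a polynomial ring over the center; there the only bookkeeping subtlety is the relabelling of an $s^*$-variable as an $s^-$-variable when passing from $\cZ(i,*)$ to $\cZ(i+1,*)$, i.e.\ when $j$ returns to $1$ for the next block.
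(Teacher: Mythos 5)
Your argument is correct, but it reaches the conclusion by a genuinely different route than the paper. The paper proves the lemma by induction along the sequence \eqref{eq:seq_bu_for_ref}: writing $R_+=R_0[s^+/t]$ as the $t$-chart of the blowing-up of $R_0$ along $\langle t,s^+\rangle$, it notes that modulo $t$ the variables $S^+$ die (because $s^+_\beta=t\cdot s^*_\beta$) while the $S^*_\beta$ are adjoined as fresh polynomial variables, so that $R_+/\langle t\rangle\cong k[S^-,S^*]$ follows from the inductive hypothesis $R_0/\langle t\rangle\cong k[S^-,S^+]$. You instead set up the surjection $\psi$ in one global step and obtain injectivity from a Krull-dimension comparison, importing the lower bound $\dim\bigl(R(i,j)/\langle t\rangle\bigr)\geq q$ from the regularity of the local ring at $x(i,j)$ with regular system of parameters $(t,s^-,s^*)$ --- a fact that Definition \ref{Def:seq_bu} takes for granted and that is itself established by essentially the same induction along the charts of \eqref{eq:seq_bu_for_ref}, so the two proofs are closer than they first appear. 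What your route buys is a clean separation of the ``hard part'' into one standard regularity statement plus catenarity/equidimensionality of polynomial rings over a field; what the paper's induction buys is the explicit coordinate identification of $S^-,S^*$ at each stage, which is reused later (e.g.\ in Theorem \ref{Thm:A4_CosRevista}). One small repair: you assert that $\ker\psi=\fp$ is prime, which is not known a priori (it amounts to $R(i,j)/\langle t\rangle$ being a domain, part of what is being proved); but the argument survives unchanged if you instead pick a minimal prime $\mathfrak{q}\supseteq\ker\psi$ with $\dim\bigl(k[S^-,S^*]/\mathfrak{q}\bigr)=q$, conclude $\operatorname{ht}\mathfrak{q}=0$, hence $\mathfrak{q}=0$ and therefore $\ker\psi=0$.
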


\begin{proof}
	Since all coefficients of $ L $ are positive, the center of the first blowing-up of \eqref{eq:seq_bu_for_ref} is the origin.
	Hence, for $ (i,j) = (1,1) $, we have 
	$$
		R(1,1)  = 
		 R\left[
		 \,
		 t, 
		 \, 
		 \frac{s_{1}}{t},
		 \,
		 \ldots,
		 \,
		 \frac{s_{q}}{t}
		 \,   
		 \right].
	$$
	Using $ M = \langle s_1, \ldots, s_q \rangle $ and $ s_i = t \cdot \frac{s_i}{t} $,
	we see that $ M \cdot R_{1,1} \subset \langle t \rangle $.
	In particular, the map $ \varphi $ is well-defined and
	$$
		R(1,1) / \langle t \rangle \cong k [S_1, \ldots, S_q], 
	$$
	for $ S_i := S_{i,1,1} := \frac{s_i}{t} \mod t $, $ 1 \leq i \leq q $.
	
	Let $ (i,j) >_{\mathrm lex} (1,1) $.
	Recall the notations introduced before the lemma,
	$ R_0 = R [t, s^-, s^+] $ and $ R_+ = R_0[\frac{s^+}{t}] = R [t, s^-, s^*] $.
	Thus $ R_+ $ is the $ t $-chart of the blowing-up of $ R_0 $ along $ \langle t, s_+ \rangle $.
	By induction on $ (i,j) $, we have
	$$ 
		R_0/ \langle t \rangle 
		\cong
		k [S^-,S^+],
	$$
	where $ S^-_\alpha := s^-_\alpha \mod t $ and $ S^+_\beta := s^+_\beta \mod t $.
	In the ring $ R_+ $, we have $ s^+_\beta = t \cdot \frac{s^+_\beta}{t} $ and hence we get $ s^+_\beta \equiv 0 \mod t $.
	Therefore, if we set $ S^*_\beta := \frac{s^+_\beta}{t} \mod t $, we obtain that
	
	\smallskip 
	
	\centerline{$
	R_+/ \langle t \rangle = R_0/ \langle t \rangle[S^*] \cong k[S^-,S^*]. 	
	$}
\end{proof}

\begin{Thm}[\cite{CosRevista} {\bf A.4}, Th\'eor\`eme]
	\label{Thm:A4_CosRevista}
	Using the notations of before,
	let us denote by $ E(i,j) $ the exceptional divisor of $ Z(0,0) \leftarrow Z(i,j) $, for $ 1 \leq i \leq l $ and $ 1 \leq j \leq b_i $. 
	We have
	\begin{enumerate}
		\item 
		$ E(i,j) $ is irreducible in $ Z(i,j) $ and $ E(i,j) = V(t) $.
		
		\item
		For every $ g \in R $, we have
		$$
			\ord_{\nu(l,b_l)} (g) = v_{L,s} (g),
		$$
		where $ \nu(l,b_l) $ is the generic point of $ E(l, b_l) $.
		
		\item
		The map
		$$
			\begin{array}{rcl}
				\gr_{L,s} (R) & \to & R(l, b_{l})/ \langle t \rangle 
				\\[5pt]
				\ini_{L,s}(g) & \mapsto & t^{-\mathcal N} g \mod t,
			\end{array}
		$$
		for $ \mathcal{N} := v_{L,s} (g) $, defines an isomorphism from the associated graded ring of $ v_{L,s} $ to the ring of functions of $ E(l, b_l) $.
	\end{enumerate}
\end{Thm}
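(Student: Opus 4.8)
The plan is to exploit the explicit monomial nature of the charts $ R(i,j) $ together with the combinatorial identity $ a_{\al(i)-1} + b_i = a_{\al(i)} = m_i $ coming from Observation \ref{Obs:bary}. The first thing I would record is the precise shape of the \emph{last} ring of the sequence \eqref{eq:seq_bu_for_ref}. Since $ m_l $ is the largest of the distinct coefficient values, every index $ \beta \geq \al(l) $ satisfies $ a_\beta = m_l = a_q $, while $ a_{\al(l)-1} = m_{l-1} $ and $ b_l = m_l - m_{l-1} $; hence $ a_{\al(l)-1} + b_l = a_q $ and all denominators in \eqref{eq:R(i,j)} collapse to the ``correct'' exponents:
$$
	R(l,b_l) = R\left[ t, \frac{s_1}{t^{a_1}}, \ldots, \frac{s_q}{t^{a_q}} \right].
$$
Setting $ \sigma_\beta := s_\beta/t^{a_\beta} $, so that $ s_\beta = t^{a_\beta}\sigma_\beta $ inside $ R(l,b_l) $, is precisely what turns parts (2) and (3) into direct computations.

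For part (1), I would observe that every center $ I(i,j)_- $ contains $ t $ (the initial center at $ R(0,0) $ is the origin $ \langle t, s \rangle $, and each subsequent center is $ \langle t, s^+ \rangle $). Consequently every exceptional divisor of the composite $ Z(0,0) \leftarrow Z(i,j) $ is supported on $ V(t) $, and in the chart complementary to $ \mathrm{div}(t) $ the total transform of each center is $ \langle t \rangle $, so the exceptional locus is exactly $ V(t) $. Irreducibility is then immediate from Lemma \ref{Lem:R(i,j)/t_polyring}, which gives $ R(i,j)/\langle t \rangle \cong k[S^-,S^*] $, a domain; hence $ \langle t \rangle $ is prime and $ E(i,j) = V(t) $ is irreducible.

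For part (2), expanding $ g = \sum_A C_A\, s^A $ with $ C_A \in R^\times \cup \{0\} $ and substituting $ s_\beta = t^{a_\beta}\sigma_\beta $ yields
$$
	g = \sum_A C_A\, t^{L(A)}\, \sigma^A \qquad \text{in } R(l,b_l).
$$
Thus the $ t $-adic order of $ g $ at the generic point of $ V(t) $ equals $ \min\{ L(A) \mid C_A \neq 0 \} = v_{L,s}(g) $, provided the lowest-order terms do not cancel modulo $ t $; they do not, since by Lemma \ref{Lem:R(i,j)/t_polyring} the residues $ \sigma_\beta \bmod t = S_\beta $ are algebraically independent over $ k $ and the $ \overline{C_A} \in k^\times $, so $ \sum_{L(A)=\mathcal N} \overline{C_A}\, S^A \neq 0 $. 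This proves $ \ord_{\nu(l,b_l)}(g) = v_{L,s}(g) $. Part (3) then follows formally: by (2), for $ g $ with $ \mathcal N = v_{L,s}(g) $ the element $ t^{-\mathcal N} g $ lies in $ R(l,b_l) $ and its class modulo $ t $ is $ \sum_{L(A)=\mathcal N} \overline{C_A}\, S^A $, which depends only on $ \ini_{L,s}(g) $; this gives well-definedness. On the distinguished generators the map reads $ \ini_{L,s}(s_\beta) \mapsto S_\beta $, and since $ \gr_{L,s}(R) $ (by Hironaka's description recalled after Definition \ref{Def:gr_L,s(R)}) and $ R(l,b_l)/\langle t \rangle $ (by Lemma \ref{Lem:R(i,j)/t_polyring}) are both polynomial rings in $ q $ variables over $ k $, matching generators, the map is a $ k $-algebra isomorphism.

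The step I expect to be the main obstacle is part (1): carefully arguing that the \emph{total} exceptional divisor of the composite stays equal to the single divisor $ V(t) $ across all charts, i.e.\ that no spurious components appear and that the strict transforms of the earlier exceptional divisors are absorbed into $ V(t) $. Within the setup, the only genuinely delicate point is the combinatorial verification that the final denominators equal $ a_\beta $; once that is in place, everything downstream follows mechanically from the substitution $ s_\beta = t^{a_\beta}\sigma_\beta $.
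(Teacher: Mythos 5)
Your proposal is correct, but note that the paper itself gives no proof of this theorem: it is imported verbatim from \cite{CosRevista} {\bf A.4}, so there is no in-text argument to compare against. Your direct verification is the natural one and is sound: the combinatorial identity $a_{\al(l)-1}+b_l=m_l=a_q$ does collapse $R(l,b_l)$ to $R[t,s_1/t^{a_1},\ldots,s_q/t^{a_q}]$, and the substitution $s_\beta=t^{a_\beta}\sigma_\beta$ together with Lemma \ref{Lem:R(i,j)/t_polyring} (algebraic independence of the $S_\beta$ over $k$, so no cancellation among lowest-order terms) gives (2) and (3) exactly as you say. One remark on emphasis: the point you flag as the ``main obstacle'' in (1) is actually harmless --- every center contains $t$ and all the fractions $s_\beta/t^{\ast}$, so each center maps into the origin of $Z(0,0)$ and its ideal pulls back to $\langle t\rangle$ in every later $t$-complementary chart; hence the exceptional locus is set-theoretically $V(t)$, and strict transforms of earlier copies of $V(t)$ simply do not meet these charts (their ideal is $\langle t\rangle:t^\infty=\langle 1\rangle$). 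Irreducibility is then Lemma \ref{Lem:R(i,j)/t_polyring}, as you use. The only step worth writing out more carefully is that $R(l,b_l)_{\langle t\rangle}$ is a DVR with uniformizer $t$ (it is a one-dimensional Noetherian local domain with principal maximal ideal, since $R(l,b_l)\subset\mathrm{Frac}(R[t])$ is a domain and the quotient by $t$ is the $q$-dimensional polynomial ring $k[S]$), which is what makes $\ord_{\nu(l,b_l)}$ the $t$-adic order used in your computation.
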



%
%
%
%
%
%
%
%
%
%
%
%
%
%

\bigskip

\section{Interpretation for the Polyhedron}

In this section we discuss the effect of the sequence of blowing-ups \eqref{eq:seq_bu_for_ref} on the characteristic polyhedron.
We fix

\begin{Setup}
\label{Setup}
	Let $ (R,M, k) $ be a regular local ring 
	and $ J \subset R $ be a non-zero ideal.
	Let $ (u) = ( u_1, \ldots, u_d ) $ be a system of elements in $ R $ such that \eqref{eq:cond_Dir(J')} holds.
	Hence the characteristic polyhedron $ \cpoly Ju \subset \IR^d $ is defined and can be computed with a suitable choice of generators $ ( f ) $ for $ J $ and elements $ ( y) $ extending $ (u) $ to a regular system of parameters $ (s) = (s_1, \ldots, s_q) =(u,y) $,
	$$
		\poly fuy = \cpoly Ju.
	$$

We fix $ \la_1, \ldots, \la_d \in \IQ_+ $ positive rational number.
Let $ \Lambda : \IR^d \to \IR $ be the positive linear form associated to these numbers,
$
	\Lambda(\x) = \Lambda(\x_1, \ldots, \x_d)  = \la_1 \x_1 + \ldots + \la_d \x_d.
$
If $ \cpoly Ju \neq \varnothing $, then $ \Lambda $ defines a compact face $ \cF_\Lambda $ of $ \cpoly Ju $,
$$
	\cF_\Lambda = \cpoly Ju \cap \{ \x \in \IR^d \mid \Lambda(\x) = \delta_\Lambda \},
$$
for $ \delta_\Lambda = \inf\{ \Lambda(\x) \mid \x \in \cpoly Ju \}  < \infty  $.
\end{Setup}

\begin{Def}
	\label{Def:vLambda}
	Let $ L : \IR^{d+r} \to \IR $ be any linear form on $ \IR^{d+r} $. 
		Then, $ L $ induces a monomial valuation on $ R $
	(in \cite{CosRevista}, the name combinatorial valuations is used),
	$$
		v_L ( \la u^A y^B ) := L(A,B) %
	\ \ \ \
	\mbox{ for } \la  \in R^\times = R \setminus M, \ A \in \IZ^d_{\geq 0}, \ B \in \IZ^r_{\geq 0}.
	$$
	This valuation induces a filtration on $  R/J \cong A $, defined by the weight 
	(it is not necessarily a valuation, for example $R/J$ may be not integral), 
	which we also denote by $ v_L $, which 
	(using the canonical projection $ \pi : R \to R/J $)
	is given by
	$$
	v_L ( \overline{g} ) := 
	v_{L,u,y} ( \overline{g} ) :=
	\sup \{
	v_L(h) \mid h \in R : \pi(h) = \overline{g},
	\}
	\ \ \ 
	\mbox{ for } \overline g \in R/J.
	$$
	We denote by $ \gr_{L}(R/J) $
	the associated graded ring,
	$$
		\gr_{L}(R/J) = \gr_{L,u,y}(R/J)  := \bigoplus_{\alpha \in \IR} {\mathcal P}_\alpha / {\mathcal P}^+_\alpha,
	$$
	where $ {\mathcal P}_\alpha := \{ \overline g \in R/J \mid v_{L} (\overline g ) \geq \alpha \} $ 
	and $ {\mathcal P}^+_\alpha := \{ \overline g \in A \mid v_{L} (\overline g )  > \alpha \} $.

	For  $ J \subset R $, $(f)$, $(u,y)$  as in Setup \ref{Setup} and a positive linear form $ \Lambda : \IR^d \to \IR $, in the case where $\cpoly Ju\not= \varnothing$, we define the linear form $ L_\Lambda : \IR^{d+r} \to \IR $ by $ L_\Lambda (A,B) := \dfrac{{\Lambda}(A)}{\delta_\Lambda(J;u)} + |B| $ and set
	$$
		\gr_\Lambda (R/J) :=  \gr_{L_\Lambda}(R/J).
	$$
	{We define $ \nu_\Lambda $ to be the valuation on $ R $ induced by $ L_\Lambda $, i.e., for $ g  = \sum C_{A,B} u^A y^B \in R \setminus \{ 0 \} $,
	\begin{equation}
		\label{eq:def_nu_Lamda}
		\nu_\Lambda (g) := \nu_\Lambda (g)_{(u,y)} := \inf \left\{ \frac{{\Lambda}(A)}{\delta_\Lambda(J;u)} + |B| 
		\mid C_{A,B} \neq 0 
		\right\}. 
	\end{equation}
	Note that this is the $ L $-valuation $ \widetilde{\nu}_L $, for $ L := \frac{1}{\delta_\Lambda} \Lambda : \IR^d \to \IR $ (Definition \ref{Def:in_0_and_u_std}(2)).} 
\end{Def}

In \cite{CosRevista}, there appears an extra assumption $(*) $
which translates in our setting to the hypothesis 
{$ 0 < \la_i \leq \delta_\Lambda $ for all $ i $.
In order to simplify proofs, we slightly sharpen this to}
$$
	0 < \la_i < \delta_\Lambda, 
	\ \ \ \mbox{ for all } i \in \{ 1, \ldots ,d \}.
	\eqno{\boldsymbol{(\ast)}}
$$ 
In  \cite{CosRevista}~\textbf{B.1.1}, there appears also the hypothesis that $k[V_1, \ldots, V_d]$ is the ring of the directrix of  $A = R/J $, which is equivalent to   $ \ord_{M'} (f') = \ord_{M} (f) $ and $ \ini_M(f)= \ini_{M'} (f') = \ini_0(f)\subset k[Y]$.
We can overcome these restrictions using the next proposition

\begin{Prop}
	\label{Prop:etale}
	Let	$ R $ and $ J $ be as before.
	Set $ A = R/J $ and let $ (v) = (v_1, \ldots, v_d) $ be a regular $ A $-sequence. 
	Let $ (u) = (u_1, \ldots, u_d ) $ be a system of elements in $ R $ that maps to $ ( v) $ under the canonical projection to $ A $.
	Consider the \'etale inclusion
	$$
	\varphi := \varphi_{R,u,c} : R \hookrightarrow S := R[w_1, \ldots, w_d]/ \langle u_1 - w_1^{\expo}, \ldots, u_d - w_d^\expo \rangle,
	$$
	where $\expo \in \IZ_+ $ is prime to $ \mbox{char}(R/M)$ when $ \mbox{char}(R/M)=p>0$.
	
	Then we have 
	
	\begin{enumerate}
		\item 
	$$
		\cpoly{J\cdot S}w= \expo \cdot \cpoly Ju.
	$$
	In particular,
	 $ \delta_\Lambda(J\cdot S) = \expo \, \delta_\Lambda(J) $, for every positive linear form defined by $ \la_1, \ldots, \la_d \in \IQ_+ $;
	 
	 \item 
	in case $\cpoly Ju\not= \varnothing$,  for every $g\in R $ and every $\Lambda$ positive linear form on $\R^d$,  $\nu_\Lambda(g)_{(u,y)} = {\nu}_\Lambda(g)_{(w,y)}$ and let $\overline{g}$, resp. $\tilde{g}$, be the residue of $g \mod  JR$, resp.$ \mod JS$, then 
	$$ \nu_\Lambda(\overline{g})=\nu_\Lambda(\tilde{g}).$$

	\end{enumerate}
\end{Prop}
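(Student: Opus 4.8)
The plan is to reduce both parts to the single substitution $ u_i = w_i^{\expo} $, together with the description of $ S $ as a free $ R $-module. First I would record the structure of $ S $. Since each $ u_i $ lies in $ M $, reduction modulo $ M $ gives $ S/MS \cong k[w_1, \ldots, w_d]/\langle w_1^{\expo}, \ldots, w_d^{\expo}\rangle $, which is local; hence $ S $ is local with maximal ideal $ M_S = \langle w_1, \ldots, w_d, y_1, \ldots, y_r\rangle $ and residue field $ k $. As $ M_S $ is generated by $ d + r = \dim S $ elements, $ S $ is regular and $ (w,y) $ is a regular system of parameters. Moreover $ S $ is free over $ R $ with basis $ \{ w^e \mid 0 \leq e_i < \expo \} $, so $ R \hookrightarrow S $ is split injective, and
$$ S/\langle w\rangle \cong R/\langle u\rangle = R', \qquad J\cdot S = \bigoplus_{0 \leq e_i < \expo} J\, w^e. $$
In particular $ \ord_{M'_S}(f_i') = \ord_{M'}(f_i') = n_i $ for all $ i $, and condition \eqref{eq:cond_Dir(J')} holds for $ J\cdot S $ with respect to $ (w) $, since passing to $ S' = R' $ leaves the initial forms $ \ini_0(f_i) \in k[Y] $ and the directrix unchanged. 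Thus $ \cpoly{J\cdot S}w $ is defined.

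For part (1) I would start from a well-prepared $ (u) $-standard basis $ (f) = (f_1, \ldots, f_m) $ with $ (u,y) $ such that $ \poly fuy = \cpoly Ju $ (cf.\ \eqref{eq:poly_computed}). Expanding $ f_i = \sum C_{A,B}\, u^A y^B = \sum C_{A,B}\, w^{\expo A} y^B $ in $ S $ — where the coefficients remain units or zero — the monomial contributing the point $ \frac{A}{n_i - |B|} $ to $ \poly{f_i}uy $ now contributes $ \frac{\expo A}{n_i - |B|} = \expo \cdot \frac{A}{n_i - |B|} $, so $ \poly{f}wy = \expo \cdot \poly fuy $. It then remains to check that $ (f;w;y) $ is again \emph{well-prepared}, for then $ \cpoly{J\cdot S}w = \poly fwy = \expo \cdot \cpoly Ju $ and, taking $ \inf \Lambda $ and using $ \Lambda(\expo \x) = \expo\, \Lambda(\x) $, $ \delta_\Lambda(J\cdot S) = \expo\, \delta_\Lambda(J) $ for every positive $ \Lambda $. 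Orders and the forms $ \ini_0(f_i) $ are unchanged, and at a scaled vertex $ \expo \x_0 $ the form $ \ini_{\expo \x_0}(f_i) $ is obtained from $ \ini_{\x_0}(f_i) $ by the substitution $ U^A \mapsto W^{\expo A} $; normalizedness, being read off $ \exp(F_i) $ and the vanishing of the $ P_{i,B} $, is preserved by this substitution.

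The main obstacle will be to show that \emph{non-solvability} is preserved, i.e.\ that no new solvable vertex is created. Suppose $ \expo \x_0 $ were solvable for $ (f;w;y) $, say $ \ini_{\expo \x_0}(f_i) = F_i(Y + \gamma) $ with $ \gamma \in k[W]^r $, where $ F_i = \ini_0(f_i) $. The left-hand sides involve only $ W $-monomials with exponents in $ \expo\, \IZ^d_{\geq 0} $. After the harmless extension of $ k $ making $ \mu_{\expo} $ split (possible as $ \expo $ is prime to $ \car(k) $), each substitution $ W_j \mapsto \zeta_j W_j $ with $ \zeta \in \mu_{\expo}^d $ fixes these left-hand sides, so $ \gamma(\zeta W) $ is again a solution; by uniqueness of the solution for a normalized basis (the point that must be justified, using \eqref{eq:cond_Dir(J')} and \cite{CJS}) this forces $ \gamma $ to be $ \mu_{\expo}^d $-invariant, i.e.\ $ \gamma \in k[W^{\expo}]^r = k[U]^r $. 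Then $ \gamma $ descends to a solution of $ \x_0 $ for $ (f;u;y) $, contradicting well-preparedness. Here the hypothesis that $ \expo $ is prime to $ \car(k) $ is essential: for $ \expo = p $ the Frobenius identity $ \gamma_j(W)^p = \gamma_j(W^p) $ would allow genuinely new solutions. This establishes part (1).

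Part (2) then follows formally. For $ g = \sum C_{A,B}\, u^A y^B \in R $ its $ w $-expansion is $ \sum C_{A,B}\, w^{\expo A} y^B $, so with $ \delta_\Lambda(J\cdot S) = \expo\, \delta_\Lambda $ (abbreviating $ \delta_\Lambda := \delta_\Lambda(J;u) $),
$$ \nu_\Lambda(g)_{(w,y)} = \inf\Big\{ \tfrac{\Lambda(\expo A)}{\expo\, \delta_\Lambda} + |B| \Big\} = \inf\Big\{ \tfrac{\Lambda(A)}{\delta_\Lambda} + |B| \Big\} = \nu_\Lambda(g)_{(u,y)}. $$
For the residues, $ \nu_\Lambda(\tilde g) \geq \nu_\Lambda(\overline g) $ is clear since every $ R $-lift of $ \overline g $ is an $ S $-lift of $ \tilde g $ with the same $ \nu_\Lambda $. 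Conversely, write an arbitrary $ S $-lift as $ H = \sum_{0 \leq e_i < \expo} H_e\, w^e $ with $ H_e \in R $; from $ H - g \in J\cdot S = \bigoplus_e J w^e $ we get $ H_0 \equiv g \bmod J $ and $ H_e \in J $ for $ e \neq 0 $. Since the monomials of $ H_e w^e $ for distinct $ e $ lie in distinct residue classes of $ w $-exponents modulo $ \expo $, no cancellation occurs, so
$$ \nu_\Lambda(H)_{(w,y)} = \min_e \Big( \tfrac{\Lambda(e)}{\expo\, \delta_\Lambda} + \nu_\Lambda(H_e)_{(u,y)} \Big) \leq \nu_\Lambda(H_0)_{(u,y)} \leq \nu_\Lambda(\overline g), $$
the last step because $ H_0 $ lifts $ \overline g $. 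Taking the supremum over $ H $ gives $ \nu_\Lambda(\tilde g) \leq \nu_\Lambda(\overline g) $, hence equality.
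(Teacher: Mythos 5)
Your proposal follows the same skeleton as the paper's proof: start from a well-prepared $(f;u;y)$, observe that the substitution $u_i=w_i^{\expo}$ dilates the projected polyhedron by the factor $\expo$, and reduce everything to showing that $(f;w;y)$ is again well-prepared, the crux being that no vertex $\expo\,\x_0$ of $\poly fwy$ becomes solvable. Where you genuinely diverge is the mechanism for that crux. The paper applies Giraud's differential operators to the putative identity $\ini_{\x_S}(f_i)=F_i(Y+\ga\,U^{\x_0})$ to produce additive polynomials $\sigma_j$ of $p$-power degrees $q_j$ generating the ridge; integrality of the exponents $q_j\x_0$, together with $\expo\,\x_0\in\IZ^d_{\geq 0}$ and $\gcd(q_j,\expo)=1$, forces $\x_0\in\IZ^d_{\geq 0}$ and hence solvability of $\x_0$ for $(f;u;y)$ --- contradiction. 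You instead twist by $\mu_{\expo}$ and invoke uniqueness of the vertex solution. Both arguments use $\gcd(\expo,p)=1$ essentially, and both ultimately rest on the ridge--directrix comparison of Appendix \ref{app:calculdeladirectrice} and \cite{CPS}: the uniqueness you flag as ``to be justified'' is exactly the statement that a translation $\de\in k[U]^r$ with $F_i(Y+\de)=F_i(Y)$ for all $i$ must vanish under \eqref{eq:cond_Dir(J')}, which one gets by writing the additive generators as $\sigma_j=\sum_l\mu_{j,l}L_{j,l}^{q_j}$ with $(\mu_{j,l})_l$ a $q_j$-basis and noting that the $\mu_{j,l}$ remain independent over $k(U)^{q_j}$. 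So your route is viable but not shorter: it trades the paper's direct divisibility argument for uniqueness plus a descent step. You should make that descent explicit: the auxiliary residue extension $k(\mu_{\expo})/k$ is realized by an \'etale local extension of $R$ and is Galois, the $F_i$ and $\ini_{\x_S}(f_i)$ are defined over $k$, so the unique solution is Galois-invariant and hence already lies in $k[U]^r$; without this, solvability over the extension does not immediately contradict well-preparedness over $k$.

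Two further points to fill in. First, to conclude $\cpoly{J\cdot S}{w}=\poly fwy$ from Hironaka's theorem you need $(f)$ to be a $(w)$-standard basis for $J\cdot S$, i.e.\ condition $(iii)$ of Definition \ref{Def:in_0_and_u_std}(3) for $\Ini_L(J\cdot S)$; you check normalizedness and non-solvability but omit this. The paper handles it by decomposing $g=\sum\phi_if_i$ with controlled $\phi_i$; your own decomposition $J\cdot S=\bigoplus_e J\,w^e$ together with the no-cancellation observation (distinct residues of $W$-exponents modulo $\expo$) yields a clean alternative, so this is an easily repaired omission rather than a flaw. Second, in part (2) your sup-over-lifts argument via $H=\sum_e H_e w^e$ is correct and in fact more careful than the paper's, which reads off $\nu_\Lambda(\overline g)$ from a normalized expansion; nothing to change there.
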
 

\begin{center}
	\begin{tikzpicture}[scale=0.6]
	\draw (2.5,0.5)  coordinate (u);
	\draw (1,1)  coordinate (v);
	\draw (0.5,2)  coordinate (w);
	\draw (5,1)  coordinate (2u);
	\draw (2,2)  coordinate (2v);
	\draw (1,4)  coordinate (2w);

	\draw[->] (0,0) -- (0,5) node[above]{$e_2$};
	\draw[->] (0,0) -- (7,0) node[right]{$e_1$};

	\path[fill=verysoftgray] (7,0.5) -- (u) -- (v) -- (w) -- (0.5,5) -- (7,5);
	\path[fill=softgray] (7,1) -- (2u) -- (2v) -- (2w) -- (1,5) -- (7,5);
	
	\draw[thick, dashed] (7,0.5) -- (u) -- (v) -- (w) -- (0.5,5);
	\draw[thick] (7,1) -- (2u) -- (2v) -- (2w) -- (1,5);
	
	\draw[dotted, thick] (0,0) -- (u) -- (2u);
	\draw[dotted, thick] (0,0) -- (v) -- (2v);
	\draw[dotted, thick] (0,0) -- (w) -- (2w);
	
	\draw[fill = white] (2u) circle [radius=0.09];
	\draw[fill = white] (2v) circle [radius=0.09];
	\draw[fill = white] (2w) circle [radius=0.09];
	\draw[fill = softgray] (u) circle [radius=0.09];
	\draw[fill = softgray] (v) circle [radius=0.09];
	\draw[fill = softgray] (w) circle [radius=0.09];
		
	\end{tikzpicture}  
	
	{\bf Figure 3:}
	{Example: $ d = 2 $ and $ \expo = 2 $,
		$ \cpoly Ju $ in light gray, 
		$ \cpoly{J \cdot S}w $ in gray. } 
\end{center}

\begin{proof}
	{Proof of (1)}. Let $ (f) = (f_1, \ldots, f_m ) $ and $ (y) = (y_1, \ldots, y_r) $ be elements in $ R $ such that $ (f;u;y) $ is well-prepared.
	In particular, $ \poly fuy = \cpoly Ju $.
	The equality 
	$$
		\poly{f\cdot S}wy = \expo \cdot \poly fuy
	$$	
	is obvious,
	where we abbreviate $ ( f \cdot S ) := (\varphi (f_1), \ldots, \varphi (f_m)) $.
	Furthermore, condition \eqref{eq:cond_Dir(J')} holds in $ S $ for $ ( w ) $.
	Hence, it remains to prove that $ (f \cdot S;w;y) $ remains well-prepared. 
	The $ 0 $-initial form of $ ( f \cdot S ) $ coincides with that of $ f $. 
	Since $ ( f) $ is a $ ( u ) $-standard basis for $ J $, there exists $L$  a positive linear form on $\R^d$ such that $ \ini_L(f_i)_{(w,y)} = F_i $, {for} $ 1 \leq i \leq m$. 
	Every $ g\in JS $ has an expansion $ g=\sum_{1\leq i \leq m} \phi_i f_i$, {with} $\phi_i \in S$. 
	One can see that we can choose $\phi_i$ such that in the expansion of $ \ini_L(\phi_i f_i)_{(w,y)}\in k[W,Y]$ there is no monomial
	{whose $ Y $-power is in} $ \exp(F_1, \ldots,F_{i-1})$ 
	{(Definition \ref{Def:leading_exponent}).
		Then} 
	$$ 
	\nu_L(g)_{(w,y)}= \min\{ \nu_L(\phi_i f_i)_{(w,y)}\}
	\ \ \mbox{ and } \ \  
	\ini_L(g)=\sum_{i\in \mathcal{E}} \ini_L((\phi_i f_i)_{(w,y)},
	$$ 
	where $\mathcal{E}\subset\{1,\ldots,m\}$ is the set of $ i $ such that $ \nu_L(\phi_i f_i)_{(w,y)}$ is minimal. 
	Hence $ ( f \cdot S ) $ {and} $ (w,y) $ verify the conditions of Definition~\ref{Def:in_0_and_u_std}(3): $(f)$ is a $ ( w ) $-standard basis for $ J \cdot S $.
	Further, since $ \varphi $ only changes the variables $ ( u ) $, the system $ ( f \cdot S) $ is normalized (Definition \ref{Def:nlzd_solv_prep_well-prep}(1)).

	Suppose there exists a vertex $ \x_S $ 
	in $ \poly{f\cdot S}wy $ that is solvable. 
	Then there exists a unique vertex $ \x \in \poly fuy $ that is mapped to $ \x_S $ by passing from $ R $ to $ S $, i.e., $ \x_S = \expo \cdot \x $.
	In particular, $ \ini_\x(f_i) $ is mapped to $ \ini_{\x_S} (f \cdot S) $ under the map induced by $ \varphi $ on the level of graded rings.
	But this implies that $ \x $ has to be a solvable vertex for $ \poly fuy $ since $ \x_S $ is solvable for $ \poly{f\cdot S}wy $. 
	Indeed, for every $i  \in \{ 1, \ldots, m \} $,  
	$$
		\ini_{\x_S}(f_i)=F_i (Y_1,\ldots,Y_r)+\sum_B \lambda_B Y^B W^{(n_i-\vert B\vert)\x_S}=F_i(Y_1+\ga_1 W^{\x_S},\ldots,Y_r+\ga_r W^{\x_S}),
	$$
	for $ \ga_1,\ldots,\ga_r \in {S / M_S}={R / M}$, 
	so $ \ini_{\x}(f_i)=F_i( Y+\ga U^{\x})$, $1\leq i \leq m$. 
	We claim  that $\x\in \IZ_{\geq 0 }^d$. By Giraud's construction of a space of maximal contact (\cite{GiraudMaxPos}~Definition~3.1(d)) to $ \langle \ini_\x(f_1),\cdots,\ini_\x(f_m) \rangle $, 
	there exist differential operators $P_j(D_A^{(Y)})$ which are polynomials of Hasse-Schmidt derivations in the $Y$  (Giraud calls them ``d\'eriv\'ees divis\'ees'') such that 
	$$
		s_j= P_j(D_A^{(Y)} (\ini_{\x}f_i)), 
		\ \ \ \ 1\leq j \leq e
	$$ 
	have their initial $\sigma_j=P_j(D_A^{(Y)} (F_i))$ which generate the ideal of the ridge of $(F_1,\ldots,F_m)$, and are additive homogeneous polynomials in $(Y)$, 
	\cite{GiraudMaxPos}~Lemma~1.7 and see in Appendix~\ref{app:calculdeladirectrice}. 
	So, $s_j=\sigma_j(Y+\ga U^{\x})$. The claim is then  a consequence of the hypothesis $\expo$ prime to $p=\mbox{char}(R/M)>0$  or $\mbox{char}(R/M)=0$ and the fact that $\expo \x\in \IZ_{\geq 0}^d$. 
	This contradicts the well-preparedness of $ (f; u; y) $.
	
	We conclude that $ (f\cdot S; w;y) $ is well-prepared which implies, by Hironaka's theorem (\cite{HiroCharPoly} Theorem (4.8), p.~291), that we have
	$$
	  \cpoly{J\cdot S}w = \poly{f\cdot S}wy = \expo \cdot \poly fuy = \expo \cdot  \cpoly Ju.
	$$
	The second statement is an immediate consequence.
	
	\smallskip
	Proof of (2). Take a monomial $u^Ay^B=w^{cA}y^B$,  
	 we get 
	 $$
		 \nu_\Lambda(u^Ay^B)_{(u,y)}=\dfrac{\Lambda(A)}{\delta_\Lambda(J;u)} + |B| 
		 = \dfrac{\Lambda(c A)}{c \delta_\Lambda(J;u)} + |B| =  \nu_\Lambda(w^{cA}y^B)_{(w,y)}
	 $$ 
	 This gives the first statement.
	 For any  $g\in R\setminus J$, we have a finite  expansion $g=\sum_{A,B}\lambda_{A,B}u^A y^B$ where  $\lambda_{A,B}$ invertible in $R$ and $B\not\in \exp(F_1,\ldots,F_m)$ for $\nu_\Lambda(u^Ay^B)_{(u,y)}$ minimal. 
	 It is clear that 
	 $ \nu_\Lambda(\overline{g})= \inf\{\nu_\Lambda(u^Ay^B)_{(u,y)}\mid \lambda_{A,B}\not=0\}$.
	 As  $g=\sum_{A,B}\lambda_{A,B}w^{cA} y^B$, using the first statement, we get the second one.

\end{proof}

Recall that $ A = R/J $ and that $ (v) = (v_1,\ldots, v_d) $ are the images of $ ( u ) $ under the canonical projection from $ R $ to $ R/J $,
then $ \varphi_{R,u,c} $ provides an \'etale map
$$
\varphi_{A,v,c} : A \hookrightarrow B := A[w_1, \ldots, w_d]/ \langle v_1 - w_1^{\expo}, \ldots, v_d - w_d^\expo \rangle.
$$

Proposition \ref{Prop:etale} leads to the following.

\begin{Cor}
	\label{Cor:sta_holds}
	Let the situation be as in Proposition \ref{Prop:etale}.
	After an \'etale covering, we may assume without loss of generality that, for every positive linear form $ \Lambda : \IR^d \to \IR $, we have that
	\begin{center}
		condition $(*)$ holds,
		i.e., $ 0 < \la_i < \delta_{\Lambda} $, for all $ i \in \{ 1 ,\ldots, d \} $,
		
		%
		%
	\end{center}
		and, furthermore, if $ (f) = (f_1, \ldots, f_m ) $ is a $ ( u ) $-standard basis for $ J $ and if $ ( y ) $ is a system of elements extending $ ( u ) $ to a regular system of parameters,
		then 
		\begin{center}
			$ n_{(u)} (f_i) = \ord_{M'} (f_i') = \ord_M(f_i) $ 
			and 
			
			\smallskip 
			
			$ \ini_M(f_i)= \ini_{M'} (f_i') = \ini_0(f) \in k[Y] $,
			for all $ i \in \{1, \ldots, m \} $,
			
			\smallskip 
			and $ \Dir_M(J) = V(Y_1, \ldots, Y_r) $.
		\end{center}
\end{Cor}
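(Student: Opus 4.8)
The plan is to run everything through the single étale covering $\varphi=\varphi_{R,u,\expo}\colon R\hookrightarrow S$ of Proposition~\ref{Prop:etale} and to exploit only the scaling recorded in its part~(1): passing to $S$ dilates the whole polyhedron, $\cpoly{J\cdot S}{w}=\expo\cdot\cpoly{J}{u}$, hence $\delta_\Lambda(J\cdot S)=\expo\,\delta_\Lambda(J)$ for every positive linear form, while the coefficients $\la_1,\dots,\la_d$, the residue field $k$, and condition~\eqref{eq:cond_Dir(J')} remain untouched. Since a dilation by $\expo$ pushes every point of the polyhedron away from the origin, both assertions will follow from choosing $\expo$ large (and prime to $\mathrm{char}(k)$ when that is positive). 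I would fix, at the end, one $\expo$ exceeding all of the finitely many thresholds produced below; for condition~$(*)$ the covering is chosen once the form $\Lambda$ at hand is fixed, which is all that the sequel needs.

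For condition~$(*)$ I fix $\Lambda$. If $\cpoly{J}{u}=\varnothing$ then $\delta_\Lambda(J)=\infty$ and there is nothing to prove, so assume $\cpoly{J}{u}\neq\varnothing$, whence $0<\delta_\Lambda(J)<\infty$. As the $\la_i$ are unchanged by $\varphi$ whereas $\delta_\Lambda(J\cdot S)=\expo\,\delta_\Lambda(J)$, taking $\expo>\max_{1\le i\le d}\la_i/\delta_\Lambda(J)$ yields $0<\la_i<\delta_\Lambda(J\cdot S)$ for all $i$, i.e.\ $(*)$ holds after the covering.

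For the order and initial-form equalities I would apply the same scaling to $\Lambda_0(\x)=|\x|=\x_1+\dots+\x_d$. Let $(f)$ be a well-prepared $(u)$-standard basis with $\poly{f}{u}{y}=\cpoly{J}{u}$; by the proof of Proposition~\ref{Prop:etale}, $(\varphi(f);w;y)$ is again well-prepared and $\poly{\varphi(f)}{w}{y}=\expo\cdot\poly{f}{u}{y}$. The point attached to a monomial $w^Ay^B$ of a $(w)$-standard basis element $h$ (with $n=\ord_{M'}(h')$, $C_{A,B}\neq0$, $|B|<n$) is $\x=\tfrac{A}{n-|B|}$, and $|A|+|B|$ is $<n$, $=n$, or $>n$ precisely as $|\x|$ is $<1$, $=1$, or $>1$. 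This gives the dictionary I would verify: $\ord_M(h)=n=\ord_{M'}(h')$ holds iff every point of $\poly{h}{w}{y}$ has $|\x|\ge1$, and $\ini_M(h)=\ini_0(h)\in k[Y]$ holds iff every such point has $|\x|>1$, i.e.\ iff $\delta_{\Lambda_0}>1$ (the empty case being automatic). Choosing $\expo>1/\delta_{\Lambda_0}(J)$ forces $\delta_{\Lambda_0}(J\cdot S)=\expo\,\delta_{\Lambda_0}(J)>1$, so $\ord_M(\varphi(f_i))=n_i$ and $\ini_M(\varphi(f_i))=\ini_0(\varphi(f_i))\in k[Y]$ for all $i$.

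It remains to pass to an arbitrary $(w)$-standard basis and to the directrix. Since $(\varphi(f))$ is a $(w)$-standard basis with $\ini_M(\varphi(f_i))=\ini_0(\varphi(f_i))$ and $\ord_M=\ord_{M'}$, it is also an $M$-standard basis of $J\cdot S$ (standard-basis theory, \cite{CJS} sections~6--7), so $\ini_M(J\cdot S)=\langle\ini_0(\varphi(f_i))\rangle\subset k[Y]$; with condition~\eqref{eq:cond_Dir(J')} in $S$ and Remark~\ref{Rk:(cond_Dir(J')_equiv_V(Y') directrix} this yields $\Dir_M(J\cdot S)=V(Y_1,\dots,Y_r)$. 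Now let $(g)$ be any $(w)$-standard basis: for each $g_i$ we have $\ini_M(g_i)\in\ini_M(J\cdot S)\subset k[Y]$, while $\ord_{M'}(g_i')=n_i$ forces every $w$-free monomial of $g_i$ to have $y$-degree $\ge n_i$; hence a monomial of $M$-order $<n_i$, or any $M$-order-$n_i$ monomial, with nonzero $w$-part would put a $W$-monomial into $\ini_M(g_i)$, contradicting $\ini_M(g_i)\in k[Y]$. Thus $\ord_M(g_i)=n_i$ and $\ini_M(g_i)=\ini_0(g_i)\in k[Y]$ for every $(w)$-standard basis, and replacing $(R,J,u)$ by $(S,J\cdot S,w)$ gives the statement. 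The step I expect to be the real work is the dictionary of the third paragraph—getting the strict-versus-non-strict inequalities right and checking that no $w$-monomial slips into the degree-$n_i$ part—together with the standard-basis input $\ini_M(J\cdot S)\subset k[Y]$, which is precisely what makes the bootstrap to arbitrary bases and to the directrix go through.
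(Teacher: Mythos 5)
Your core argument --- push everything through $\varphi_{R,u,\expo}$ and use the dilation $\cpoly{J\cdot S}{w}=\expo\cdot\cpoly Ju$ of Proposition~\ref{Prop:etale}(1) to drive $\delta_\Lambda$ and $\delta_{\Lambda_0}$ above the required thresholds --- is exactly the paper's intended justification (the paper gives no proof beyond ``Proposition~\ref{Prop:etale} leads to the following''). Your handling of condition $(*)$, your dictionary between $\delta_{\Lambda_0}>1$ and the equalities $\ord_M=\ord_{M'}$ and $\ini_M=\ini_0\in k[Y]$ (this is the content of \cite{CJS} Lemma~7.4, which the paper invokes later for precisely this purpose), and your observation that the covering must be chosen \emph{after} $\Lambda$ is fixed are all correct; indeed the literal ``for every positive linear form'' cannot be achieved by a single $\expo$, since whenever the polyhedron meets a coordinate hyperplane one can make $(*)$ fail by taking $\la_2/\la_1$ large, so your reading is the only tenable one. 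For the well-prepared basis computing $\cpoly{J\cdot S}{w}$, your argument is complete.

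The final bootstrap paragraph, however, contains a genuine error. From $\Ini_M(J\cdot S)=\langle\ini_0(\varphi(f_i))\rangle$ with generators in $k[Y]$ you conclude $\Ini_M(J\cdot S)\subset k[Y]$ and hence $\ini_M(g_i)\in k[Y]$ for an \emph{arbitrary} $(w)$-standard basis $(g)$. But $\Ini_M(J\cdot S)$ is an ideal of $\gr_M(S)\cong k[W,Y]$, and an ideal generated by elements of $k[Y]$ is not contained in $k[Y]$ (it contains $W_1\cdot\ini_0(\varphi(f_1))$, for instance). The universally quantified claim is in fact false: for $J\cdot S=\langle y_1^2+w_1^{2\expo},\,y_2^3\rangle$ the pair $g_1=y_1^2+w_1^{2\expo}$, $g_2=y_2^3+w_1g_1$ is a $(w)$-standard basis in the sense of Definition~\ref{Def:in_0_and_u_std}(3) (take $L(\x_1)=2\x_1$), yet $\ini_M(g_2)=Y_2^3+W_1Y_1^2\notin k[Y]$. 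What rules out such cross terms is not the standard-basis property but normalization (Definition~\ref{Def:nlzd_solv_prep_well-prep}), so the corollary's ``if $(f)$ is a $(u)$-standard basis'' must be read as an existence statement about (well-)prepared bases --- which is the only form in which the paper uses it (Proposition~\ref{Prop:Seq_S_is_permissible}, Construction~\ref{Cons:S*}). Deleting the bootstrap paragraph, your proof establishes exactly that.
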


Let us describe the explicit algorithm that we will apply for constructing the first part of the combinatorial sequence of blowing-ups that we will use to determine $ \delta_\Lambda $.

\begin{Constr}
	We have given $ \Lambda: \IR^d \to \IR $.
	Let $ \mu  = \mu_\Lambda \in \IZ_+ $ be the lowest common multiple of the denominators of its coefficients $ \la_1, \ldots, \la_d \in \IQ_+ $.
	For any $ \rho \in \IZ_+ $, we set
	\begin{equation}
	\label{eq:def_N}
		N := N(\rho) := \rho \cdot \mu.  
	\end{equation}
	Then $ L_0 := N \cdot \Lambda $ define a positive linear form  $ L_0 : \IR^d \to \IR $ with integer coefficients,
	$$
		L_0 (\x_1, \ldots, \x_d ) =
		a_1 \x_1 + \ldots + a_d \x_d,
		\ \ \ \ \
		\mbox{ for } 
		\ 
		a_i := N \cdot \la_i \in \IZ_+,
		\ 1 \leq i \leq d.
	$$
	Without loss of generality, we may assume 
	$$ 
		a_1 \leq a_2 \leq \ldots \leq a_d 
	$$ 
	by reordering the elements $ ( u_1, \ldots, u_d ) $.
	We introduce the positive linear form $ L : \IR^q \to \IR $ (recall $ q = d + r $) by putting
	\begin{equation}
	\label{eq:def_of_L}
		L(\x_1, \ldots, \x_q) := L_0(\x_1, \ldots, \x_d) + a_d \cdot (\x_{d+1} +\ldots + \x_q).
	\end{equation}
	We define 
	$$ 
		(\cS) = (\cS_{L,\rho}) = (\cS_{\Lambda,\rho}) 
	$$ 
	to be the local sequence of blowing-ups \eqref{eq:seq_bu_for_ref} 
	corresponding to the linear form $ L $
	which we constructed in the previous section.
	Note that the dependence on $ \rho \in \IZ_+ $ comes from the definition of $ N $ \eqref{eq:def_N}.
\end{Constr}
 
\begin{Obs}
	\label{Obs:tR_and_its_coordinates}
	Since $ 0 < a_i \leq a_d $, for all $ i $, we obtain that the largest value among the coefficients of $ L $ is $ a_d $ 
	and it is achieved for $ \x_{d+1}, \ldots, \x_q $ in particular.
	The latter correspond to $ ( y_1, \ldots , y_r ) $ and hence the centers in $ (\cS) $ are always contained in the strict transform of $ V ( y ) $
	(see \eqref{eq:center_bu_to_(i,j)} and use $ (s_{r+1} , \ldots, s_q ) = (y) $).

	Recall that the largest coefficient of $ L $, here $ a_d $, corresponds to the length of the sequence of blowing-ups $ (\cS) $ (see the end of Observation \ref{Obs:seq_bu}).
	Further, using \eqref{eq:R(i,j)}, the final chart is
	$$
	\tR :=  \tR(\cS) := R(l, b(l)) = 
	R\left[
	\,
	t, 
	\, 
	\frac{u_1}{t^{a_1}},
	\, 
	\frac{u_2}{t^{a_2}},
	\,
	\ldots,
	\,
	\frac{u_{d}}{ t^{ a_d } }, 
	\,
	\frac{y_{1}}{ t^{ a_d } },
	\,
	\ldots,
	\,
	\frac{y_{r}}{ t^{ a_d } }
	\,   
	\right] = R [t, \tu, \ty ],
	$$
	$$
	\mbox{where we set }
	\ \ \  
	\tu_i := \dfrac{u_i}{t^{a_i}} 
	\ \ \mbox{ and } \ \  
	\ty_j := \dfrac{y_j}{t^{a_d}},
	\ \ \
	\mbox{ for } 1 \leq i \leq d 
	\ \mbox{ and } \
	1 \leq j \leq r.
	$$
\end{Obs}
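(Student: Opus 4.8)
The plan is to establish the three assertions of the Observation by reading off the coefficients of $L$ from its definition~\eqref{eq:def_of_L} and feeding them into the barycentric machinery of Observation~\ref{Obs:bary}. First I would note that, by construction, the coefficients of $L$ are $a_1, \ldots, a_d$ on the variables $\x_1, \ldots, \x_d$ (these come from $L_0 = N\Lambda$) and $a_d$ on each of $\x_{d+1}, \ldots, \x_q$. Since the reordering in the construction arranges $a_1 \le \ldots \le a_d$ as in~\eqref{eq:ass_L_nzld_int}, the largest coefficient of $L$ is $a_d$, and it is attained at $\x_d$ together with every one of $\x_{d+1}, \ldots, \x_q$. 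As these last indices correspond to $(y_1, \ldots, y_r)$, the first assertion follows at once.

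Next I would deduce that each center lies in the strict transform of $V(y)$. In the notation of Observation~\ref{Obs:bary} let $m_1 < \ldots < m_l$ be the distinct coefficient values, so that $m_l = a_d$. For every $y$-index $i \in \{d+1, \ldots, q\}$ we have $a_i = a_d = m_l > m_{k-1}$ for all $k \in \{1, \ldots, l\}$, hence $i \in I_k$ for every such $k$, which forces $\al(k) \le d+1$. Consequently each center $I(i,j)_-$ of~\eqref{eq:center_bu_to_(i,j)} contains, besides $t$, the transforms $\frac{s_{\al(i)}}{t^{a_{\al(i)-1}+j-1}}, \ldots, \frac{s_q}{t^{a_{\al(i)-1}+j-1}}$, and since $\al(i) \le d+1$ these include the strict transforms $\frac{y_1}{t^{\bullet}}, \ldots, \frac{y_r}{t^{\bullet}}$ of all the $y_j$. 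As a larger ideal cuts out a smaller subscheme, $V(I(i,j)_-)$ is contained in the common zero locus of these transforms, which is exactly the strict transform of $V(y) = V(y_1, \ldots, y_r)$; this is the second assertion.

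For the length and the final chart I would use the telescoping identities of Observation~\ref{Obs:bary}, namely $b_1 + \ldots + b_k = m_k = a_{\al(k)}$ (recall $m_0 = 0$). By Observation~\ref{Obs:seq_bu} the length of $(\cS)$ is $b_1 + \ldots + b_l = a_q$, and since $q$ is a $y$-index its coefficient equals $a_d$, so the length is $a_d$. To identify the final chart I set $(i,j) = (l, b_l)$ in~\eqref{eq:R(i,j)}; the crucial point is that the exponent attached to $s_{\al(l)}, \ldots, s_q$ is $a_{\al(l)-1} + b_l = m_{l-1} + (m_l - m_{l-1}) = m_l = a_d$, using that $a_{\al(l)-1} = m_{l-1}$. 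For the earlier variables $s_{i'}$ with $i' < \al(l)$ the exponent is $a_{i'}$. Writing $(s) = (u, y)$ and recalling that every $y_j$ carries coefficient $a_d$ (so that $\frac{y_j}{t^{a_d}} = \ty_j$) while $\frac{u_i}{t^{a_i}} = \tu_i$, I recover $\tR = R[t, \tu, \ty]$ with the stated normalizations.

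The whole argument is essentially an exercise in bookkeeping with the indices of the barycentric decomposition, and I expect no serious obstacle. The one step that needs a little care is the second assertion, where the purely combinatorial inequality $\al(k) \le d+1$ must be translated into the geometric statement that the centers sit inside the strict transform of $V(y)$; here I must make sure that the generators $\frac{y_j}{t^{\bullet}}$ are genuinely present in every center ideal, which is precisely what $\al(i) \le d+1$ guarantees.
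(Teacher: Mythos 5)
Your proposal is correct and follows exactly the route the paper intends: the paper states this as an Observation whose verification is the bookkeeping with \eqref{eq:def_of_L}, the barycentric decomposition of Observation~\ref{Obs:bary} (in particular $a_{\al(k)}=m_k=b_1+\dots+b_k$ and $\al(k)\le d+1$ since the $y$-indices carry the maximal coefficient $a_d$), the length count at the end of Observation~\ref{Obs:seq_bu}, and the substitution $(i,j)=(l,b_l)$ in \eqref{eq:R(i,j)}. Your computation $a_{\al(l)-1}+b_l=m_{l-1}+(m_l-m_{l-1})=a_d$ and the identification of the center ideals \eqref{eq:center_bu_to_(i,j)} as containing all the transforms of the $y_j$ are precisely the checks the paper leaves implicit.
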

	
	We put $ (\ts) := (\tu, \ty ) $ 
	and $ \tM := \langle t, \ts \rangle $.
	We denote by
	$ \tJ \subset \tR $ the strict transform of $ J $ in $ \tR $.
	If $ ( f) = (f_1, \ldots, f_m ) $ is a system of generators for $ J \subset R $, then we denote by $ (\tf) = (\tf_1, \ldots, \tf_m ) $ their strict transforms in $ \tR$.

\begin{Prop}
	\label{Prop:Seq_S_is_permissible}
	Let $ (R,M, k) $, $ J \subset R $, $ (s) = (u,y) $, $ \Lambda $ and $ \cF_\Lambda $ be as in Setup \ref{Setup}.
	Let $ ( f)  = (f_1, \ldots, f_m) $ be a $ ( u ) $-standard basis for $ J \subset R $ such that $ ( f;u;y) $ is well-prepared,
	i.e., $ \poly fuy = \cpoly Ju $.
	Assume that condition $(*)$ is true  and that
	\footnote{In contrast to before, we now use the index $ \ell $ instead of $ i $ in order to avoid confusion with $ ( i, j ) \in \varepsilon $.}  
	\begin{equation}
	\label{eq:assump_ord_initial}
		\ord_{M'} (f_\ell') = \ord_{M} (f_\ell) 
		\ \mbox{ and } \  
		\ini_M(f_\ell)= \ini_{M'} (f_\ell') = \ini_0(f_\ell)\in k[Y],
		\ \ \mbox{for } 1 \leq \ell \leq m.
	\end{equation} 
	Then we have
	\begin{enumerate}
		
		\item 
		The sequence of blowing-ups $ ( \cS) $ is permissible for $ X (0,0)= \Spec(R/J) $ (Definition~\ref{Def:permissible}). 
		More precisely, with the notations of Definitions~\ref{Def:in_0_and_u_std}~(1) and  \ref{Def:seq_bu}, 
		$$
			\ord_{x(0,0)}(f_{\ell})=n_{(u)}(f_\ell), 
			\ \  \ \ \ 1\leq {\ell}\leq m,
		$$ 
		and if we set $X(i,j)\subset Z(i,j)$ the strict transform of $X(0,0)$, then $x(i,j)\in X(i,j)$ for all $(i,j)$ and $x(i,j)$ are near to $x(0,0)\in X(0,0)$
		{(Definition \ref{Def:near})}.
		
		\smallskip 
		
		\item 
		The strict transforms $ (\tf ) $ of $ ( f) $ in $ \tR $
		are a $ ( \tu, t ) $-standard basis for $ \tJ $.
		
		\smallskip 
		
		\item 
		The triple $ (\tf; (\tu,t) ; \ty ) $ is well-prepared and hence 
		$ \cpoly \tJ{\tu,t} = \poly \tf{\tu,t}\ty $.
		
	\end{enumerate}	
\end{Prop}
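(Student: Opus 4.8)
The plan is to prove the three assertions in the order (1), then (2) and (3) jointly, everything resting on a single estimate for the monomial valuation $v_{L,s}$ and on the sequence version of Lemma~\ref{Lem:Preparedness_stable}. \emph{The key estimate.} First I would record the behaviour of $v_{L,s}$ on the standard basis. Write $f_\ell = \sum C_{A,B}\,u^A y^B$ and $F_\ell(Y) := \ini_0(f_\ell)$, homogeneous of degree $n_\ell := n_{(u)}(f_\ell)$. The claim is that, under $(\ast)$ and \eqref{eq:assump_ord_initial}, one has $v_{L,s}(f_\ell) = a_d\, n_\ell$ and $\ini_{L,s}(f_\ell) = F_\ell(Y) \in k[Y]$. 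Indeed, by \eqref{eq:poly_computed} every exponent with $C_{A,B}\neq 0$ and $|B| < n_\ell$ gives a point $A/(n_\ell - |B|) \in \cpoly{J}{u}$, so $\Lambda(A) \geq \delta_\Lambda\,(n_\ell - |B|)$; multiplying by $N$ and using $a_d = N\la_d$ together with the strict inequality $\la_d < \delta_\Lambda$ of $(\ast)$ yields $L(A,B) = L_0(A) + a_d|B| > a_d n_\ell$, whereas the monomials of $F_\ell(Y)$ (those with $A = 0$, $|B| = n_\ell$) all attain $a_d n_\ell$, and terms with $|B|\geq n_\ell$, $A\neq 0$ only raise the value. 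The same computation, carried out with the partial linear forms attached to the intermediate rings $R(i,j)$, controls every step of $(\cS)$.

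\emph{Part (1).} The equality $\ord_{x(0,0)}(f_\ell) = n_\ell$ is immediate from \eqref{eq:assump_ord_initial}, since adjoining the variable $t$ does not change the order at the origin. I would then argue by induction along $(\cS)$. By Observation~\ref{Obs:tR_and_its_coordinates} every center $I(i,j)_-$ contains $t$ and all the transformed $y$-coordinates, hence contains the directrix $\Dir_M(J) = V(Y)$; combined with the estimate above this shows that the strict transform of each $f_\ell$ lies in the $n_\ell$-th power of the center ideal, so normal flatness holds and permissibility follows from Proposition~\ref{Prop:normal_flatness_and_generators}. Because the directrix sits inside every center, the initial form $F_\ell$ — and with it the order $n_\ell$ — is preserved at each blow-up, so by Definition~\ref{Def:near} each $x(i,j)$ is near to $x(0,0)$. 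This is precisely where the strict inequality in $(\ast)$ is indispensable, guaranteeing that the face carrying $\ini_0$ is never solved nor its order lowered.

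\emph{Parts (2) and (3).} With permissibility and nearness in hand, $(\cS)$ is of the type governed by the sequence version of Lemma~\ref{Lem:Preparedness_stable}. In the final chart $\tR = R[t,\tu,\ty]$ the substitution $u_i = t^{a_i}\tu_i$, $y_j = t^{a_d}\ty_j$ turns $u^A y^B$ into $t^{L(A,B)}\tu^A\ty^B$, so dividing by $t^{a_d n_\ell}$ and reducing modulo $t$ gives $\tf_\ell \equiv F_\ell(\ty)$ by the key estimate; reducing further modulo $\langle\tu,t\rangle$ shows $n_{(\tu,t)}(\tf_\ell) = n_\ell$ and that \eqref{eq:cond_Dir(J')} holds for $(\tu,t)$ with directrix $V(\widetilde{Y})$. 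Since the $F_\ell$ form a standard basis for the relevant initial ideal, so do the $\ini_0(\tf_\ell) = F_\ell(\widetilde{Y})$, which gives (2) via Definition~\ref{Def:in_0_and_u_std}. Finally, Lemma~\ref{Lem:Preparedness_stable}(3) transports preparedness from each vertex of $\poly{f}{u}{y}$ to the corresponding vertex of $\poly{\tf}{\tu,t}{\ty}$, so well-preparedness of $(f;u;y)$ yields that of $(\tf;(\tu,t);\ty)$; Hironaka's theorem then gives $\cpoly{\tJ}{\tu,t} = \poly{\tf}{\tu,t}{\ty}$, proving (3).

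\emph{Main obstacle.} The crux is the nearness statement in (1): one must show that the order $n_\ell$ never drops anywhere along $(\cS)$. This cannot be extracted from Lemma~\ref{Lem:Preparedness_stable}, which already presupposes nearness; instead it rests on propagating the valuation estimate through all the intermediate charts, where the strict inequality $\la_d < \delta_\Lambda$ of $(\ast)$ is exactly what keeps every contributing exponent strictly above the face carrying $\ini_0$.
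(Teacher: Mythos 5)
Your strategy is the paper's: the key inequality $L(A,B) > a_d\, n_\ell$ for every monomial contributing to the polyhedron, derived from $\Lambda(A) \ge \delta_\Lambda (n_\ell - |B|)$ and the strict inequality $\la_d < \delta_\Lambda$ of $(\ast)$; reduction of permissibility to normal flatness via Proposition~\ref{Prop:normal_flatness_and_generators}; and transport of preparedness via Lemma~\ref{Lem:Preparedness_stable}. But two steps are asserted exactly where the work lies. First, the intermediate charts: the paper computes the exponent $C(i,j)$ of $t$ acquired by $u^Ay^B$ in $R(i,j)$ (formula \eqref{eq:C(i,j)}) and must prove $C(i,j) + A_{\al(i)} + \ldots + A_d + |B| > n_\ell$, i.e.\ membership of the strict transform in $I(i,j)^{n_\ell}$. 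This is \emph{not} literally ``the same computation with the partial linear forms'': it reduces to your final-chart estimate only after the extra observations that $a_* := a_{\al(i)-1}+j+1 \le a_d$ and $a_1 \le \ldots \le a_d$, which let one bound the intermediate weighted sum from below by $\tfrac{a_1}{a_d}A_1 + \ldots + A_d + |B|$ and only then invoke $N\Lambda(A/(n_\ell-|B|)) \ge N\delta_\Lambda > a_d$. Your sentence ``the same computation \ldots controls every step'' is precisely where the proof happens, and it needs to be carried out.

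Second, part (2). Knowing that $(F_1(\widetilde{Y}),\ldots,F_m(\widetilde{Y}))$ is a standard basis of the ideal it generates does not give Definition~\ref{Def:in_0_and_u_std}(3): one must know that $(\tf)$ generates the strict transform $\tJ$ and that the $F_\ell(\widetilde{Y})$ form a standard basis of the \emph{full} initial ideal $\Ini_L(\tJ)$, not merely of $\langle F_1(\widetilde{Y}),\ldots,F_m(\widetilde{Y})\rangle$. The paper closes this by noting that \eqref{eq:assump_ord_initial} forces $\delta(J;u) > 1$, so that $(f)$ is an admissible standard basis (\cite{CJS} Corollary 7.17) whose strict transform at a near point is again a standard basis (\cite{CJS} Theorem 8.1), and this is applied iteratively along $(\cS)$ because staying in the $t$-chart preserves $\delta > 1$. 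Without some such input your deduction of (2) --- and hence the iterative use of Proposition~\ref{Prop:normal_flatness_and_generators} in (1), which presupposes a standard basis at each stage --- does not close. The remainder (the treatment of monomials with $|B|\ge n_\ell$, the nearness conclusion, and the derivation of (3) from Lemma~\ref{Lem:Preparedness_stable} together with the invariance of the residue field) matches the paper.
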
	

\begin{proof}  
	The first assertions in (1) for $x(0,0)\in X(0,0)$ are clear: the first center is $x(0,0)$ which is obviously permissible for $X(0,0)$.
	
	Note that the assumptions \eqref{eq:assump_ord_initial} imply that $ \delta > 1 $, for
	$$ 
		\delta := \delta(J;u) := \inf \{ \x_1 + \ldots + \x_d \mid (\x_1, \ldots, \x_d) \in \cpoly Ju \} .
	$$
	Results of \cite{CJS} imply that the strict transform of the $ ( u ) $-standard basis $ ( f ) $ is a $ (u',t)$-standard basis after a permissible blowing-up.
	(Here, $ (u') $ denotes the strict transform of $(u) $).
	More precisely, by \cite{CJS} Corollary 7.17, $ \delta > 1 $ implies that $ ( f )$ is a standard basis for $ J $ that is admissible for $ (u,y) $ (loc.~cit.~Definition 6.1(3)),
	and loc.~cit.~Theorem 8.1 then implies the assertion.
	
	Since we always consider the origin of the $ t $-chart along the sequence $ (\cS) $, the condition $ \delta > 1 $ remains true after each blowing-up. 
	Hence, we can apply the previous arguments after each blowing-up of $ ( \cS) $ as long as we can show that the preceding blowing-ups are permissible.  

	Since the centers appearing in $ (\cS) $ are all regular, it is sufficient to show normal flatness to get that the centers are permissible.
	Using Proposition~\ref{Prop:normal_flatness_and_generators},
	this boils down to proving that
	for the strict transforms of the given $ ( u ) $-standard basis $ ( f ) = (f_1, \ldots, f_m) $, the order of the strict transform of each $f_{\ell}$, $1\leq {\ell} \leq m$ along the ideal of the center coincides with  its order at the origin $x(i,j)$, $ ( i, j ) \in \varepsilon $ and this order is $n_{(u)}(f_{\ell})=:n_{\ell}$.

	Let $ ( i, j) \in \varepsilon $.
	Suppose that the sequence of blowing-ups until $ R(i,j) $ is permissible and that $x(i,j)\in X(i,j)$ is near to $x(0,0) $.
	If $ ( i, j ) $ is the last element in $ \varepsilon $, we are done.
	Suppose this is not the case. 
	By the above arguments the strict transforms of $ ( f ) $ in  
	$ R(i,j) $ are a standard basis for the strict transform of $ J $.
	By \eqref{eq:center_bu_to_(i,j)}, the ideal of the next center in $ (\cS) $ is
	$$
			 I(i,j) = 
			 \left\{ 
			 \begin{array}{ll}
				 \langle 
				 \,
				 t, 
				 \,
				 \dfrac{s_{\al(i)}}{t^{a_{\al(i)-1}+j}},
				 \,
				 \ldots,
				 \,
				 \dfrac{s_{q}}{t^{a_{\al(i)-1}+j}}
				 \,   
				 \rangle
				 , 
				 & 
				 \mbox{if } 
				 (i,j)_+ = (i,j+1),
				\\[10pt]
				\langle 
				\,
				t, 
				\,
				\dfrac{s_{\al(i+1)}}{t^{a_{\al(i)-1}+j}},
				\,
				\ldots,
				\,
				\dfrac{s_{q}}{t^{a_{\al(i)-1}+j}}
				\,   
				\rangle  
				,
				&
				\mbox{if } 
				(i,j)_+ = (i+1,1),
			 \end{array}
			 \right. 
	$$
	where $ ( i,j)_+ $ denotes the element in $ \varepsilon $ following $ ( i,j) $ (\wrt the lexicographical order).
	We discuss the first case and leave the second as an exercise to the reader
	which follows with the same arguments if one uses $ a_{\al(i)-1}+j = a_{\al(i)} $.
	
	Let $ u^A y^B $, $ |B| < n_\ell = \ord_M (f_\ell)$, be a monomial appearing in $ f_\ell $ with non-zero coefficient%
	, for $ \ell \in \{ 1, \ldots, m \} $.
	Recall that 
	$ R(i,j) = R\left[
	\,
	t, 
	\, 
	\dfrac{s_1}{t^{a_1}},
	\,
	\ldots,
	\,
	\dfrac{s_{\al(i)-1}}{t^{a_{\al(i)-1}}}, 
	\,
	\dfrac{s_{\al(i)}}{t^{a_{\al(i)-1}+j}},
	\,
	\ldots,
	\,
	\dfrac{s_{q}}{t^{a_{\al(i)-1}+j}}
	\,   
	\right]
	$ (see \eqref{eq:R(i,j)}).
	We observe that the number of blowing-ups until we reach $ R(i,j) $ is $ {a_{\al(i)-1}+j} $.
	Hence, in $ R(i,j) $, the monomial $ u^A y^B $ becomes 
	$  {u'}^A t^C {y'}^B $ (with the obvious notations $ ( u',y') $)
	for 
	\begin{equation}
	\label{eq:C(i,j)}
		C := C(i,j) := a_1 A_1 + \ldots + a_{\al(i)-1} A_{\al(i)-1} 
		+ (a_{\al(i)-1} + j ) [A_{\al(i)} + \ldots + A_d + |B| - n_\ell].
	\end{equation}
	
	We claim that 
	\begin{equation}
	\label{eq:C+...>n_ell}
		C(i,j) + A_{\al(i)} + \ldots + A_d + |B| > n_\ell. 
	\end{equation}
	Clearly, this would imply that the strict transform $ f_{\ell,i,j} $ of $ f_\ell $ in $ R(i,j) $ is contained in $ I(i,j)^{n_\ell} $
	and that ord$_{x(i,j)}(f_{\ell,i,j})=n_\ell$, i.e., $x(i,j)$ is near to $x(0,0)$.
	
	If we set $ a_* := a_{\al(i)-1} + j +1 $, then it is not hard to see that the claim is equivalent to 
	$$
	\frac{a_1}{a_*} A_1 + \ldots + \frac{a_{\al(i)-1}}{a_*} A_{\al(i)-1} 
	+ A_{\al(i)} + \ldots + A_d + |B|
	>
	n_\ell. 
	$$
	Using $ 0 < a_1 \leq \ldots \leq a_d $ and $ a_* \leq a_d $,
	we get that 
	$$
		\dfrac{a_1}{a_*} A_1 + \ldots + \dfrac{a_{\al(i)-1}}{a_*} A_{\al(i)-1} 
		+ A_{\al(i)} + \ldots + A_d + |B|
		\geq
			\dfrac{a_1}{a_d} A_1 + \ldots + \dfrac{a_d}{a_d} A_{d} + |B|.
	$$
	By $ (*) $, we have $ a_d = N \la_d < N \de_\Lambda $.
	Using $ N \Lambda (\x_1, \ldots, \x_d) = a_1 \x_1 + \ldots + a_d \x_d $, $ |B| < n_\ell $, and the definition of $ \de_\Lambda $, we obtain
	$$
	\frac{a_1 A_1 + \ldots + a_d A_d}{n_\ell - |B|} = 
		N\Lambda
		\left(
		\frac{A}{n_\ell - |B|}
		\right) 
		\geq N \de_\Lambda >  a_d.
	$$
	This is equivalent to 
	$$
		\dfrac{a_1}{a_d} A_1 + \ldots + \dfrac{a_d}{a_d} A_{d} + |B|
		>
		n_\ell  
	$$
	and hence shows the claim.
	
	Since the strict transforms of $ ( y ) $ are always contained in the centers of $ (\cS) $, we have $ u^A y^B \in I(i,j)^{n_\ell} $ if $ |B| \geq n_\ell $.
	Therefore, $ f_{\ell,i,j} \in  I(i,j)^{n_\ell} $ for every $ \ell \in \{ 1, \ldots, m \} $ and thus the blowing-up is permissible.
	This proves (1) and, as explained at the beginning of the proof, this also implies (2).
	
	In fact, \eqref{eq:C+...>n_ell} implies 
	\begin{equation}
	\label{eq:C>0}
		C(i,j)_+ = 
		C(i,j) + A_{\al(i)} + \ldots + A_d + |B| - n_\ell > 0 .
	\end{equation}
	So, $ C(i,j) > 0 $, for $(i,j)\not=(0,0)$ and we get
	\begin{equation}
	\label{eq:initial_mod_t}
		f_{\ell, i,j} \equiv F_\ell (Y') \mod t, \ \ \ \mbox{for all } (i,j) > (0,0),
	\end{equation} 
	where $ (y') $ are the strict transforms of $ ( y ) $ in $ R(i,j) $ and $ F_\ell(Y) = \ini_M(f_\ell) $.

	\smallskip 
	
	\noindent
	(3).
	Since the exponents in $ (y) $ do not change, $ (\tf)  $ is normalized at every vertex of $ \poly{\tf}{\tu,t}{\ty} $.
	Suppose $ (\tf; (\tu, t); \ty) $ is not well-prepared.
	Then there has to be a vertex $ \tx \in \poly{\tf}{\tu,t}{\ty} $ that is solvable.
	By Lemma \ref{Lem:R(i,j)/t_polyring}, the residue field of $ \tR $ coincides with $ k = R/M $.
	By construction of the sequence of blowing-ups,
	there exists a vertex $ \x \in \poly fuy $ mapping to $ \tx $ under $ (\cS) $ (see Observation \ref{Obs:transform_Delta_under_S} for more details)
	and by sending $ T $ to $ 1 $, we obtain the $ \x $-initial form of $ ( f ) $ from the $ \tx $-initial form of $ (\tf) $.
	Therefore, $ \tx $ being solvable implies that $ \x $ is solvable and this contradicts the well-preparedness of $ ( f;u;y) $ at $ \x $.
\end{proof}

	\begin{Obs}
		Along the sequence of blowing-ups, the power $C(i,j)$ of $ t $ starts from $ C(0,0) = 0 $, 
		is strictly increasing, then, may be stationary and may be decreasing at the end. 
		Indeed, $ C(1,1)=A_1+\ldots+A_d+\vert B\vert -n_\ell>0 $, by \eqref{eq:assump_ord_initial}.
		By \eqref{eq:C+...>n_ell}, we have, if there exists 
		$$
		i_1:= \inf \{i \in \IZ_{\geq 2} \mid  A_{\al(i)} + \ldots + A_d + |B| < n_\ell \},
		$$ 
		there is  a strict decrease  for $ (i,j)>(i_1,1) $.
		On the other hand, 
		if there exists 
		$$
		i_0:= \inf\{i  \in \IZ_{\geq 2} \mid  A_{\al(i)} + \ldots + A_d + |B| =n_\ell \},
		$$ 
		then $ C(i,j) $ is strictly increasing  for $(i,j)\leq (i_0,1)$ and is stationary between $ (i_0,1) $ and $ (i_1,1)$. 
		The last value for $C$ i.e  $C(l,b_l)$ is
		$$ 
		C(l,b_l) =
		{L(A,B)- a_d n_\ell > 0
			\ \ (\mbox{using $ L $ of } \eqref{eq:def_of_L}).},
		$$
		where the strict inequality is given by condition $(*)$. So, if there is equality in   $(*)$ as in \cite{CosRevista} at the last step of $ (\cS) $, the exponent of $t$ is $0$ and if $\delta_\Lambda<\lambda_i$, the sequence  $ (\cS) $ would stop prematurely.
	\end{Obs}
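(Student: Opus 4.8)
The plan is to fix, once and for all, a single monomial $ u^A y^B $ occurring in $ f_\ell $ with $ |B| < n_\ell $, and to track the exponent $ C(i,j) $ of $ t $ attached to it, given by the closed formula \eqref{eq:C(i,j)}. The whole behaviour will be read off from the one-step increments along $ (\cS) $.

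First I would compute the one-step increment. For a step inside a block, $ (i,j) \to (i,j+1) $, formula \eqref{eq:C(i,j)} gives immediately
$$
	C(i,j+1) - C(i,j) = A_{\al(i)} + \ldots + A_d + |B| - n_\ell =: \si_i,
$$
since only the factor $ a_{\al(i)-1}+j $ changes, by one. For a block transition $ (i,b_i) \to (i+1,1) $ I would use the barycentric relation $ a_{\al(i)-1}+b_i = a_{\al(i)} = \ldots = a_{\al(i+1)-1} $ of Observation \ref{Obs:bary} to rewrite the summation range of $ C(i,b_i) $ up to $ \al(i+1)-1 $; a short computation then yields
$$
	C(i+1,1) - C(i,b_i) = A_{\al(i+1)} + \ldots + A_d + |B| - n_\ell = \si_{i+1}.
$$
Together with $ C(1,1)-C(0,0)=\si_1 $ this establishes the uniform statement that \emph{the increment needed to reach any index lying in block $ i $ equals $ \si_i $}. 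This bookkeeping at the block boundaries is the only delicate point; everything else is formal.

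Next I would observe that $ (\si_i)_i $ is a non-increasing sequence of integers, because raising $ i $ (hence $ \al(i) $) only deletes non-negative summands $ A_k $ from $ A_{\al(i)}+\ldots+A_d $. Consequently $ C(i,j) $ is strictly increasing while $ \si_i>0 $, constant while $ \si_i=0 $, and strictly decreasing while $ \si_i<0 $; the sign changes occur precisely at the blocks $ i_0=\inf\{i\geq 2\mid \si_i=0\} $ and $ i_1=\inf\{i\geq 2\mid \si_i<0\} $, each of which may fail to exist. This is exactly the asserted increasing--stationary--decreasing shape, with the plateaux located on the blocks where $ \si_i=0 $.

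Finally I would treat the two endpoints. At the start, $ C(0,0)=0 $ by definition and $ C(1,1)=\si_1=A_1+\ldots+A_d+|B|-n_\ell>0 $: indeed $ \ord_M(f_\ell)=n_\ell $ forces $ |A|+|B|\geq n_\ell $, and if equality $ |A|+|B|=n_\ell $ held, the degree-$ n_\ell $ monomial $ u^A y^B $ would appear in $ \ini_M(f_\ell) $; since $ \ini_M(f_\ell)=\ini_0(f_\ell)\in k[Y] $ by \eqref{eq:assump_ord_initial}, this forces $ A=0 $ and hence $ |B|=n_\ell $, contradicting $ |B|<n_\ell $. At the end, substituting $ (i,j)=(l,b_l) $ and using $ a_{\al(l)-1}+b_l=a_d $ together with $ a_k=a_d $ for $ k\geq\al(l) $ collapses \eqref{eq:C(i,j)} to
$$
	C(l,b_l) = L_0(A) + a_d|B| - a_d n_\ell = L(A,B) - a_d n_\ell,
$$
with $ L $ as in \eqref{eq:def_of_L}; condition $(*)$ gives $ L_0(A) > a_d(n_\ell-|B|) $, whence $ C(l,b_l)>0 $. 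This is precisely where the strict form of $(*)$ (rather than the weaker inequality of \cite{CosRevista}) enters, which accounts for the closing remark that equality in $(*)$ would make the final $ t $-exponent vanish.
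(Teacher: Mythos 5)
Your proposal is correct and follows essentially the same route as the paper: both read off the behaviour of $C(i,j)$ from the closed formula \eqref{eq:C(i,j)}, governed by the sign of $A_{\al(i)}+\ldots+A_d+|B|-n_\ell$, with the initial positivity coming from \eqref{eq:assump_ord_initial} and the final positivity from condition $(*)$ exactly as in Claim \ref{Claim}. Your explicit computation of the one-step increments across block boundaries (using $a_{\al(i)-1}+b_i=a_{\al(i)}$) is the only point where you supply detail the paper leaves implicit, and it checks out.
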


\begin{Obs}[\em Transformation of the polyhedron under $(\cS)$]
	\label{Obs:transform_Delta_under_S}
	Let $ g \in R $ be an element of a $ ( u ) $-standard basis $ (f_1, \ldots, f_m) $  computing the characteristic polyhedron, 
	$ \cpoly Ju = \poly fuy $. 
	Suppose $ n = \ord_{M'}(g) = \ord_M(g)  < \infty $. 
	Let $ u^A y^B $ be a monomial appearing in the expansion of $ g $ with $ |B| < n $.
	Hence, the corresponding point in the projected polyhedron is 
	$$	
		\x = \frac{A}{n - |B|} \in \IQ^d.
	$$
	By the previous proposition, $ (\cS) $ is permissible for $ V(g) $.
	Then the strict transform of $ u^A y^B $ in $ \tR = R(\cS) $
	(considered as a monomial in $ g $) becomes
	$$
		t^{a_1 A_1 + \ldots + a_d A_d} \, \tu^A \, 
		t^{a_d |B|} \, \ty^B \,
		t^{- a_d n}
		= \tu^A  \, t^{ C } \, 
		\ty^B 
	$$
	$$
	\mbox{for } \ \
		C := L_0(A) + a_d (|B| - n) = L(A,B)- a_d n
		\ \ (\mbox{using } \eqref{eq:def_of_L}).
	$$
	Therefore, the point $ \x \in \poly guy \subset \IQ^{d}_{\geq 0} $ maps to
	the point
	$$
		\left(\x, \frac{C}{n-|B|} \right)
		= \left(\x, \frac{L_0(A) + a_d (|B|- n)}{n-|B|} \right)
		=
		\left(\x,  L_0(\x) - a_d \right)
	$$
	in $ \poly{\tg}{\tu, t}{\ty} $, where $ \tg $ denotes the strict transform of $ g $ in $ \tR $.

	\begin{Claim}
		\label{Claim}
		We have that $ C > 0 $.
	\end{Claim}
	
	\begin{proof}
		The assertion is equivalent to the statement $ L_0(\x) - a_d > 0 $.
		Recall that $ L $ corresponds to $ \Lambda : \IR^d \to \IR $ which defines a face of $ \cpoly Ju = \poly fuy $.
		Hence we have
		$$
			\Lambda(\x) \geq \delta_\Lambda,
			\ \ \ 
			\mbox{ which is equivalent to }
			\ \ \ 
			L_0(\x) \geq N \delta_\Lambda
			\ \ \
			(\mbox{for $ N $ see \eqref{eq:def_N}}).
		$$
		By condition $(\ast)$, we have $ a_d < N \delta_\Lambda $, and therefore
		$
			L_0(\x) - a_d >  L_0(\x) -  N \delta_\Lambda \geq 0.
		$ 
	\end{proof}
\end{Obs}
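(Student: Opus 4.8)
The plan is to reduce the claim $C > 0$ to the single scalar inequality $L_0(\x) > a_d$. Recall from the computation in this Observation that $C = L_0(A) + a_d(|B| - n)$, and that the point under consideration is $\x = A/(n - |B|)$ with $|B| < n$, so the denominator $n - |B|$ is a positive integer. Since $L_0$ is linear and homogeneous, dividing $C$ by $n - |B|$ yields exactly $L_0(\x) - a_d$. Hence $C > 0$ is equivalent to $L_0(\x) > a_d$, and it suffices to establish the latter.

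To bound $L_0(\x)$ from below, I would use that $\x$ lies in the characteristic polyhedron. Since $g$ belongs to a well-prepared $(u)$-standard basis computing $\cpoly Ju$, every point of $\poly guy$ --- in particular $\x$ --- lies in $\cpoly Ju$. By the definition of $\delta_\Lambda$ as the infimum of $\Lambda$ over this polyhedron, we obtain $\Lambda(\x) \geq \delta_\Lambda$. Scaling by $N$ (recall $L_0 = N \cdot \Lambda$, so that $a_i = N\la_i$) gives $L_0(\x) = N\Lambda(\x) \geq N\delta_\Lambda$.

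The final step invokes the sharpened hypothesis $(\ast)$, which asserts $\la_d < \delta_\Lambda$. Multiplying by $N$ yields $a_d = N\la_d < N\delta_\Lambda$. Combining this strict inequality with the lower bound from the previous paragraph gives $L_0(\x) \geq N\delta_\Lambda > a_d$, which is exactly what was wanted.

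There is no genuine obstacle here; the claim follows directly from the geometry of the polyhedron together with condition $(\ast)$. The only point demanding care is that it is precisely the \emph{strict} form of $(\ast)$ (namely $\la_i < \delta_\Lambda$ rather than $\le$) that produces the strict inequality $C > 0$. Had one only assumed the weaker bound $\la_i \le \delta_\Lambda$ used in \cite{CosRevista}, the argument would yield merely $C \geq 0$, and the exponent of $t$ in the final chart could vanish --- which is exactly the degenerate behavior the authors wish to rule out.
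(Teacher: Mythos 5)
Your proof is correct and follows essentially the same route as the paper: reduce $C>0$ to $L_0(\x) > a_d$, use $\x \in \cpoly Ju$ to get $L_0(\x) \geq N\delta_\Lambda$, and conclude via the strict inequality $a_d = N\la_d < N\delta_\Lambda$ from condition $(\ast)$. Your closing remark about the strict form of $(\ast)$ being essential also matches the paper's own subsequent observation about the degenerate case where equality holds.
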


Note: If $ \cpoly Ju = \varnothing $, then we also have $ \cpoly{\tJ}{\tu, t} = \varnothing $.

\begin{Def}
	If $ \cpoly Ju \neq \varnothing$, then the local sequence of blowing-ups $ (\mathcal{S}) $ provides the following map (on the level of polyhedra)
	$$ 
		\Psi : \IR^d \to \IR^{d+1}, \ \  \x \mapsto  (\x, L_0(\x)- a_d).
	$$ 
\end{Def}

As an immediate consequence of the definition of $ \Psi $ and Lemma \ref{Lem:Preparedness_stable}, we obtain

\begin{Lem}
	The restriction of $ \Psi $ to $ \cpoly Ju $ defines a well-defined map
	$$
		\psi : \cpoly Ju \to  \cpoly{\tJ}{\tu, t}.
	$$
\end{Lem}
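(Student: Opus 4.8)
The plan is to show that the image of every point of $\cpoly Ju$ under $\Psi$ already lies in $\cpoly{\tJ}{\tu,t}$; once this is established, the restriction $\psi := \Psi|_{\cpoly Ju}$ is automatically a well-defined map into the claimed target. All the substantial work is contained in the preceding results, so the argument is essentially one of assembly. First I would fix a well-prepared $(u)$-standard basis $(f) = (f_1,\ldots,f_m)$ together with $(y)$ realizing $\cpoly Ju = \poly fuy$; such a choice exists by Setup \ref{Setup}, and after the \'etale reduction of Corollary \ref{Cor:sta_holds} the hypotheses of Proposition \ref{Prop:Seq_S_is_permissible} are in force.

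Next I would identify $\Psi$ with the map on polyhedra induced by the sequence $(\cS)$. By Observation \ref{Obs:transform_Delta_under_S}, a monomial $u^A y^B$ (with $|B| < n$) appearing in a generator $g$ contributes the point $\x = \tfrac{A}{n-|B|}$ to $\poly guy$ and, under the strict transform along $(\cS)$, becomes the monomial $\tu^A\, t^C\, \ty^B$ with $C = L_0(A) + a_d(|B|-n)$; the corresponding point of $\poly{\tg}{\tu,t}{\ty}$ is exactly $(\x,\, L_0(\x) - a_d) = \Psi(\x)$. Thus the closed-form map $\Psi$ of the definition agrees with the composite of the elementary per-step maps from Lemma \ref{Lem:Preparedness_stable}, and Claim \ref{Claim} guarantees $C > 0$, so every image point lies in $\IR^{d+1}_{\geq 0}$.

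Finally I would invoke the iterated form of Lemma \ref{Lem:Preparedness_stable}. Its hypotheses hold because Proposition \ref{Prop:Seq_S_is_permissible}(1) shows that each blowing-up in $(\cS)$ is permissible and that every origin $x(i,j)$ is near to $x(0,0)$; the lemma then yields $\Psi(\poly fuy) \subseteq \poly{\tf}{\tu,t}{\ty}$, the latter being by definition the smallest convex, $\IR^{d+1}_{\geq 0}$-stable set containing the image. By Proposition \ref{Prop:Seq_S_is_permissible}(3) the triple $(\tf;(\tu,t);\ty)$ is well-prepared, whence $\poly{\tf}{\tu,t}{\ty} = \cpoly{\tJ}{\tu,t}$. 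Combining these gives $\Psi(\cpoly Ju) \subseteq \cpoly{\tJ}{\tu,t}$, which is precisely the asserted well-definedness. The only point that genuinely requires attention is matching the composite of the elementary maps $\Psi$ with the single closed-form $\Psi$; but this is exactly the monomial bookkeeping already carried out in Observation \ref{Obs:transform_Delta_under_S}, so I expect no serious obstacle beyond keeping the exponents of $t$ straight.
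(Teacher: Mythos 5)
Your proposal is correct and follows exactly the route the paper intends: the paper gives no separate proof, stating only that the lemma is an immediate consequence of the definition of $\Psi$ and Lemma \ref{Lem:Preparedness_stable}, and your assembly via Observation \ref{Obs:transform_Delta_under_S}, Claim \ref{Claim}, and Proposition \ref{Prop:Seq_S_is_permissible}(1),(3) is precisely that chain of reasoning made explicit.
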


\begin{Prop}
	\label{Prop:Poly_behavior}
	Let $ (R,M, k) $, $ J \subset R $, $ (s) = (u,y) $, $ \Lambda $ and $ \cF_\Lambda $ be as in Setup \ref{Setup}.
	Let $ ( f)  = (f_1, \ldots, f_m) $ is a set of generators for $ J \subset R $.
	\begin{enumerate}

		\item 
		If $ \x \in \cF_\Lambda $ is a vertex of the face $ \cF_\Lambda \subset \cpoly Ju $, 
		then $ \psi(\x) $ is a vertex of $ \cpoly{\tJ}{\tu, t} $.
		In fact, $ \tcF_\Lambda  := \psi(\cF_\Lambda ) \subset \cpoly{\tJ}{\tu, t} $ is the compact face of $ \cpoly{\tJ}{\tu, t} $ at which the value of $ t $ is minimal.
		More precisely, we have 
		$$
			\forall \, \x_+ = (\x_1, \ldots, \x_{d}, \x_t) \in \cpoly{\tJ}{\tu, t}  
			\ \ 
			\mbox{ we have } 
			\ \
			\x_{t} \geq N \delta_\Lambda - a_d > 0, 
		$$
		and we have equality for the points in $ \tcF_\Lambda  $.

		\smallskip 
		
		\item 
		If $ (f,u,y) $ is prepared at every vertex of the face $ \cF_\Lambda  $,
		then $ (\tf, (\tu, t), \ty) $ is prepared at every vertex of $ \tcF_\Lambda  $.

	\end{enumerate}
\end{Prop}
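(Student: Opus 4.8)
The plan is to reduce everything to the explicit shape of the transition map recorded in Observation~\ref{Obs:transform_Delta_under_S} and to the structural description of the transformed polyhedron coming from Lemma~\ref{Lem:Preparedness_stable}. We work in the nonempty case $\cpoly Ju\neq\varnothing$, with $(f)$ a $(u)$-standard basis computing $\cpoly Ju=\poly fuy$ and satisfying \eqref{eq:assump_ord_initial} and condition $(\ast)$ (arrangeable by the reductions of Corollary~\ref{Cor:sta_holds}); note that part (1) only concerns the intrinsic polyhedra $\cpoly Ju$, $\cpoly{\tJ}{\tu,t}$ and their faces, so we are free to pick a well-prepared basis there. Proposition~\ref{Prop:Seq_S_is_permissible} then applies, so $(\cS)$ is permissible, the intermediate origins are near, and $\cpoly{\tJ}{\tu,t}=\poly{\tf}{\tu,t}{\ty}$. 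Since $\psi(\x)=(\x,L_0(\x)-a_d)$ is the restriction of the \emph{affine} map $\Psi$, the image $\psi(\cpoly Ju)$ is convex, and Lemma~\ref{Lem:Preparedness_stable}(1) applied to the composed sequence yields the clean description
\[
\cpoly{\tJ}{\tu,t} = \psi(\cpoly Ju) + \IR^{d+1}_{\geq 0}.
\]

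For (1) I would read off the inequality directly: every point of $\cpoly{\tJ}{\tu,t}$ has last coordinate $L_0(\x)-a_d+\epsilon$ with $\x\in\cpoly Ju$ and $\epsilon\geq 0$, and since $L_0(\x)=N\Lambda(\x)\geq N\delta_\Lambda$ on $\cpoly Ju$ while $a_d<N\delta_\Lambda$ by $(\ast)$, this is $\geq N\delta_\Lambda-a_d>0$, with equality exactly when $\x\in\cF_\Lambda$ and $\epsilon=0$, i.e.\ on $\psi(\cF_\Lambda)$. To see that $\tcF_\Lambda=\psi(\cF_\Lambda)$ is genuinely a \emph{compact} face (and not just the possibly unbounded locus $\{\x_t=N\delta_\Lambda-a_d\}$), I would exhibit an exposing form with strictly positive coefficients: writing $\la=(\la_1,\ldots,\la_d)$ so that $\Lambda(\x)=\la\cdot\x$, for any $\beta>0$ the form $\ell(\x,\x_t):=\Lambda(\x)+\beta\x_t$ takes the value $(1+\beta N)\Lambda(\x)-\beta a_d$ at $\psi(\x)$, hence is minimized over $\psi(\cpoly Ju)$ exactly on $\psi(\cF_\Lambda)$, and it strictly increases under adding any nonzero element of $\IR^{d+1}_{\geq 0}$; thus $\ell$ is minimized over $\cpoly{\tJ}{\tu,t}$ exactly on $\tcF_\Lambda$, which is therefore a compact face.

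The vertex statement is handled by the same device. If $\x_0$ is a vertex of $\cF_\Lambda$ then it is a vertex of $\cpoly Ju$, hence exposed by some strictly positive $\eta\in\IR^d_{>0}$ (every vertex of a polyhedron of the form $C+\IR^d_{\geq 0}$ is cut out uniquely by a positive form). For $\beta>0$ the form $\tilde\eta(\x,\x_t):=\eta\cdot\x+\beta\x_t$ restricts to $(\eta+\beta N\la)\cdot\x-\beta a_d$ on $\psi(\cpoly Ju)$, and for every other vertex $\x$ of $\cpoly Ju$ one has $\eta\cdot(\x-\x_0)>0$ together with $\la\cdot(\x-\x_0)=\Lambda(\x)-\delta_\Lambda\geq 0$, so $\tilde\eta$ is uniquely minimized at $\psi(\x_0)$; hence $\psi(\x_0)$ is a vertex of $\cpoly{\tJ}{\tu,t}$. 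Conversely, any vertex of $\tcF_\Lambda$ is a vertex of $\cpoly{\tJ}{\tu,t}$, so by Lemma~\ref{Lem:Preparedness_stable}(2) it equals $\psi(\x_0)$ for a vertex $\x_0$ of $\cpoly Ju$, and minimality of $\x_t$ forces $\Lambda(\x_0)=\delta_\Lambda$, i.e.\ $\x_0\in\cF_\Lambda$. This gives the bijection of vertices and confirms $\tcF_\Lambda=\psi(\cF_\Lambda)$.

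Finally, (2) is essentially a translation through Lemma~\ref{Lem:Preparedness_stable}(3). Given a vertex $\tx$ of $\tcF_\Lambda$, part (1) furnishes the vertex $\x_0$ of $\cF_\Lambda$ with $\Psi(\x_0)=\tx$; preparedness of $(f;u;y)$ at $\x_0$ is part of the hypothesis, and Lemma~\ref{Lem:Preparedness_stable}(3), applied to the composed sequence $(\cS)$ whose permissibility and nearness are supplied by Proposition~\ref{Prop:Seq_S_is_permissible}(1), transports it to preparedness of $(\tf;(\tu,t);\ty)$ at $\tx$; since $\tx$ was arbitrary, this proves (2). I expect the main obstacle to lie not in (2) but in the bookkeeping for (1): one must keep $\psi(\cF_\Lambda)$ (compact) carefully distinct from the full minimal-$t$ face, which in general is unbounded in the $\tu$-directions, and it is precisely the positive-coefficient exposing forms $\ell$ and $\tilde\eta$ — rather than the bare coordinate $\x_t$ — that pin down both the compact face and the exact vertex correspondence.
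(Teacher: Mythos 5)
Your proof is correct and follows essentially the same route as the paper's: the explicit formula $\psi(\x)=(\x,L_0(\x)-a_d)$ from Observation \ref{Obs:transform_Delta_under_S}, the minimality of $\Lambda$ on $\cF_\Lambda$ together with condition $(\ast)$ (i.e.\ the computation of Claim \ref{Claim}) for part (1), and Lemma \ref{Lem:Preparedness_stable} for part (2). The only difference is presentational: where the paper argues via the projection back to the $(u)$-subspace and leaves the compactness of $\psi(\cF_\Lambda)$ and the vertex correspondence implicit, you make them explicit with strictly positive exposing forms --- a welcome clarification, since the full minimal-$t$ locus is indeed unbounded in the $\tu$-directions in general.
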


\begin{center}	
	\begin{tikzpicture}[scale=0.6]
	
	
	
	\draw (-3.5,0,-4.5)  coordinate (0v1);  
	\draw (-0.5,0,-2)  coordinate (0v2);    
	\draw (3,0,-1)  coordinate (0v3);  		
	\draw (5.2,0,-0.8)  coordinate (0v4);  		
	
	\draw (-3.5,3,-4.5)  coordinate (v1);  
	\draw (-0.5,2,-2)  coordinate (v2);  
	\draw (3,2,-1)  coordinate (v3);     
	\draw (5.2,4,-0.8)  coordinate (v4);   

	%
	\path[fill=softgray] (-4,0,-5) -- (0v1) -- (0v2) -- (0v3) -- (0v4) -- (6.2,0,-0.8) -- (2, 0, -5) -- (-4,0,-5);
	%
	\path[fill=lightgray] (-4,3,-5) -- (v1) -- (v2) -- (v3) -- (v4) -- (6.2,4,-0.8) -- (6.2,5,-0.8) -- (5.2,5,-0.8) -- (3,5,-1) -- (-0.5,5,-2) -- (-3.5,5,-4.5) -- (-4,5,-5) -- (-4,3,-5);
	%
	\path[fill=gray]  (-4,3,-5) -- (v1) -- (v2) -- (-3.5,2,-5) -- (-4,3,-5);
	%
	\path[fill=gray] (v3) -- (v4) -- (6.2,4,-0.8) -- (6,2,-1) -- (v3);  
	
	\draw[->] (0,0,0) -- (0,6,0) node[above]{$e_t$};
	\draw[->] (0,0,0) -- (-5,0,-5) node[left]{$e_2$};
	\draw[->] (0,0,0) -- (7,0,0) node[right]{$e_1$};
	
	\draw[thick] (-4,0,-5) -- (0v1) -- (0v2) -- (0v3) -- (0v4) -- (6.2,0,-0.8);
	
	%
	\draw[very thick] (v1) -- (v2) -- (v3) -- (v4);
	%
	\draw[very thick] (v1) -- (-3.5,5,-4.5); 
	\draw[very thick] (v2) -- (-0.5,5,-2);
	\draw[very thick] (v3) -- (3,5,-1);
	\draw[very thick] (v4) -- (5.2,5,-0.8);
	%
	\draw[very thick] (v1) -- (-4,3,-5); 
	\draw[dotted] (-4,3,-5) -- (-1,2,-2.5) ; 
	\draw[very thick] (v2) -- (-3.5,2,-5);
	\draw[very thick] (v3) -- (6,2,-1); 
	\draw[dotted] (6.2,4,-0.8) -- (4,2,-1); 
	\draw[very thick] (v4) -- (6.2,4,-0.8);
		
	\draw[thick, dotted] (0v1) -- (v1);
	\draw[thick, dotted] (0v2) -- (v2);
	\draw[thick, dotted] (0v3) -- (v3);
	\draw[thick, dotted] (0v4) -- (v4);
	
	\draw[thick, dashed] (-3.6,0,-2.8) -- (7,0,0.1);
	
	\draw[fill = white] (0v1) circle [radius=0.08];
	\draw[fill = white] (0v2) circle [radius=0.08];
	\draw[fill = white] (0v3) circle [radius=0.08];
	\draw[fill = white] (0v4) circle [radius=0.08];
	
	\draw[fill = white] (v1) circle [radius=0.08];
	\draw[fill = white] (v2) circle [radius=0.08];
	\draw[fill = white] (v3) circle [radius=0.08];
	\draw[fill = white] (v4) circle [radius=0.08];

	\node[left] at (-3.6,0,-2.8) {$ \Lambda(\x) = \delta_\Lambda $};
	\end{tikzpicture}  
	
	{\bf Figure 4:} Illustration of the transformation of polyhedron under $ \psi $ for $ d = 2 $;
	
	$ \cpoly Ju $ (dim 2) in the bottom,
	$ \cpoly{\tJ}{\tu, t} $ (dim 3) above,
	the defining line of $ \cF_\Lambda $ (dashed).
\end{center} 

\begin{proof}
	The map $ \psi $ introduces a new coordinate direction (corresponding to $ t $)
	and lifts the point of $ \cpoly Ju $ along it.
	In particular,  the projection of 
	$ \cpoly{\tJ}{\tu, t} $
	to the subspace defined by the parameters $ ( u ) $
	coincides with $ \cpoly Ju $.
	This provides that $ \psi(\cF_\Lambda) $ is contained in a face of $ \cpoly{\tJ}{\tu, t} $.
	
	For $ \x \in \cF_\Lambda $, we have $ \Lambda(\x) = \delta_\Lambda $.
	For every point $ w \in \cpoly Ju \setminus \cF_\Lambda $, we have $ \Lambda(w) > \delta_\Lambda $ and hence 
	$ 
		L_0(w)-a_d > N \delta_\Lambda - a_d = L_0(\x) - a_d, 
	$
	for $ \x \in \cF_\Lambda $.
	Hence $ \psi (\cF_\Lambda) $ has to be an entire face of $ \cpoly{\tJ}{\tu, t} $.
	
	The formula in (1) follows by the proof of Claim \ref{Claim}, and
	Lemma \ref{Lem:Preparedness_stable} implies (2).
\end{proof}

\bigskip

\section{Combinatorial Sequences of Blowing ups II and Proof of Theorem A}
\label{sec:flags}

In this section we continue the construction of the combinatorial sequence and show how to obtain the number $ \delta_\Lambda $ from its length.
After drawing a connection between the linear form $ \Lambda $ and a flag on $ A = R/J $, we present the proof of Theorem \ref{Thm:Intro}.

\smallskip 

We still assume that we have the situation of Setup \ref{Setup}.
Let us recall what we did so far.
Given $ J \subset R $, $ ( u ) = (u_1, \ldots, u_d) $ such that \eqref{eq:cond_Dir(J')} holds,
and a positive linear form $ L : \IR^q  \to \IR $ (which was constructed from some positive rational numbers $ \la_1 \leq  \ldots \leq \la_d $, see \eqref{eq:def_of_L}),
we introduced a new variable $ t $ and constructed a sequence of blowing-ups $ (\cS) = (\cS_{L,\rho}) $ (see \eqref{eq:seq_bu_for_ref} and recall that $ N = \rho \cdot \mu $, for $ \rho \in \IZ_+ $, \eqref{eq:def_N}), $ R_0 = R[t] \to \ldots \to R(l, b(l))  = \widetilde{R}  $.
After a possible \'etale covering of $ R $, we achieve that $ (\mathcal{S}) $ is permissible for $ V(J) $ (Proposition~\ref{Prop:Seq_S_is_permissible}).
Finally, we discussed how the polyhedron $ \cpoly Ju $ transforms to $  \cpoly{\tJ}{\tu, t}  $ under $(\mathcal{S})$ and showed that, if $ \cpoly Ju \neq \varnothing  $,
$$
	\min\{ \tau \in \IR_{\geq 0 } \mid \exists (\x_1, \ldots, \x_d , \tau ) \in \cpoly{\tJ}{\tu, t} 
	\}
	 = N \delta_\Lambda - a_d > 0 
	 \ \ \
	 \mbox{(Proposition \ref{Prop:Poly_behavior})}.
$$
In $ \tR$, we use the regular system of parameters $ (t, \ts) = (t, \tu, \ty ) $ (see Observation \ref{Obs:tR_and_its_coordinates}).

By Proposition \ref{Prop:normal_flatness_and_generators}, we have that $ V(t, \ty) $ is a permissible center for the strict transform $ V(\tJ) $ of $ V(J) $ if and only if $  N \delta_\Lambda - a_d  \geq 1 $.

\begin{Constr}\label{Cons:S*}
	Let the situatio be as described.
	We extend the sequence of local blowing-ups $ (\cS) $ as follows:
	If $ N \delta_\Lambda - a_d < 1 $, then $ V(t, \ty ) $ is not permissible and we stop.
	
	If $ N \delta_\Lambda - a_d  \geq 1 $, then we blow up with center $ D_1 = V(t,\ty) $ and consider the origin of the $ t $-chart, i.e., we get
	$$
		R(l, b_l) \to R(l+1, 1),
	$$  
	and the map is defined by sending $ \ty_j \mapsto t \cdot \ty_j^{(1)} $, $ 1 \leq j \leq r $, and the identity otherwise.
	It is not hard to observe that we obtain 
	the characteristic polyhedron of the strict transform $ \tJ^{(1)} $ of $ \tJ $ in $ \tR_1 $ by translating $ \cpoly{\tJ}{\tu,t} $ by the vector $ (0, \ldots, 0, -1 ) \in \IR^d $.
	
	If $ V(t, \ty^{(1)}) $ is permissible for $ V(\tJ^{(1)}) $
	(which is equivalent to $  N \delta_\Lambda - a_d  \geq 2  $),
	then we blow up with center $ D_2 := V(t, \ty^{(1)}) $.
	Otherwise we stop.
	
	We continue, and, after finitely many steps, we eventually get a permissible sequence of local blowing-ups
	$$
		R[t] = R(0,0) \to R(1,1) \to \ldots \to {R(l, b_l)} 
		 \to R(l+1, 1) \to \ldots \to R(l+1, c),
		\eqno{\boldsymbol{(\mathcal{S}_*)}}
	$$
	which is of length $ a_d + c $, for 
	$$
		c := \lfloor N \delta_\Lambda - a_d \rfloor \in \IZ_{\geq 0} \cup \{ \infty \}.
	$$
	Here, $c$ may be infinite, this is the case where $ \cpoly{J}{u}=\varnothing $.
	Note: Let $ ( f ) = (f_1, \ldots, f_m ) $ be a $ ( u ) $-standard basis for $ J $ such that \eqref{eq:assump_ord_initial} holds.
	For $ i \in \{1, \ldots, c \} $, we denote by $ \tf_{\ell+1, i} $ the strict transform of $ f_\ell $ in $ R(l+1, i)$, for $ 1 \leq \ell \leq m $.
	As in \eqref{eq:initial_mod_t}, we have 
	\begin{equation}
	\label{eq:initial_mod_t_second}
	 \tf_{\ell+1, i } \equiv F_\ell (\widetilde{Y}^{(i)}) \mod t, \ \ \ \mbox{for all } i <c,
	\end{equation} 
	where $ (\ty^{(i)}) $ are the strict transforms of $ (y) $ in $ R(l+1, i) $ and $ F_\ell(Y) = \ini_M(f_\ell) $.
\end{Constr}

\begin{Rk}
		If $ \cpoly Ju = \varnothing$, the construction above provides an infinite sequence of local blowing-ups.
	In this case $ \delta_\Lambda = \infty $ and there exists a standard basis $ ( f ) = (f_1, \ldots, f_m) $ such that $ f_\ell \in \langle y \rangle^{n_\ell } $, for all $ \ell \in \{ 1, \ldots, m \} $,
	 where $ n_\ell := \ord_M(f_\ell) $.
	 Clearly, we have $ \tf_{\ell,i} \in \langle \ty^{(i)} \rangle^{n_\ell} $, for all $ \ell \in \{ 1, \ldots, m \} $ and $ i \geq 1 $.
	 Hence $ V (t, \ty^{(i)}) $ is permissible for all $ i\geq 1 $. 
\end{Rk}

	Recall that, by convention, $ a_d $ is the largest coefficient of $ L (\x) = a_1 \x_1 + \ldots + a_d \x_d $, and
	$$
		a_d =  N \lambda_d \in \IZ_+, 
	$$
	where $ \la_1, \ldots, \la_d \in \IQ_+ $ are the rational numbers determining the linear form $ \Lambda $ which defines a face $ \cF_\Lambda $ of $ \cpoly Ju $ via
	$
		\cF_\Lambda  = \cpoly Ju \cap \{ \x \in \IR^d \mid \Lambda(\x) = \delta_\Lambda \},
	$
	if $ \cpoly Ju \neq \varnothing  $.
	Furthermore, recall that by \eqref{eq:def_N},
	$$
		N = N(\rho) =  \rho \cdot \mu , 
		\ \ \ 
		\mbox{ for } \rho \in \IZ_+,
	$$
	where $ \mu $ is the lowest common multiple of the denominators of $ \la_1 , \ldots, \la_d $.

\begin{Obs}
	Putting together the previous remarks, we see that the constructed sequence of blowing-ups 
	$ (\cS_*) = (\cS_{*,L,\rho}) = (\cS_{*,\Lambda,\rho}) $ has length
	$$
		\ell en (\cS_*):= \ell en(\la_1, \ldots, \la_d, \rho ) := a_d + {c} = N \la_d + \lfloor N \delta_\Lambda - N \la_d \rfloor =  \lfloor N \delta_\Lambda  \rfloor. 
	$$
	Note that for $ \rho \in \IZ_+ $ such that $ N \delta_\Lambda \in \IZ_+ $, we have that $ \ell en(\cS_*) = N \delta_\Lambda $ and hence 
	$$ 
		\frac{\ell en(\cS_*)}{N} = \delta_\Lambda.
	$$
\end{Obs}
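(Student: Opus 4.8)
The plan is to reduce the stated identity to the elementary computation
$$
a_d + \lfloor N\delta_\Lambda - a_d \rfloor = \lfloor N\delta_\Lambda \rfloor
$$
and to assemble the two length counts that have already been recorded for the parts of the sequence. First I would recall that, by Observation~\ref{Obs:seq_bu} together with Observation~\ref{Obs:tR_and_its_coordinates}, the sequence $(\cS)$ consists of exactly $a_d$ local blowing-ups: the length of \eqref{eq:seq_bu_for_ref} equals the largest coefficient of the defining linear form, and for the form $L$ of \eqref{eq:def_of_L} this largest coefficient is $a_d$ (it is attained on each of the $y$-directions). Then, by Construction~\ref{Cons:S*}, the extension from $R(l,b_l)$ to $R(l+1,c)$ adds precisely $c = \lfloor N\delta_\Lambda - a_d \rfloor$ further blowing-ups. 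Hence $\ell en(\cS_*) = a_d + c = N\la_d + \lfloor N\delta_\Lambda - N\la_d \rfloor$, where I have substituted $a_d = N\la_d$.

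The decisive point is that $a_d = N\la_d$ is a \emph{positive integer}: by \eqref{eq:def_N} we have $N = \rho\,\mu$ with $\mu$ the least common multiple of the denominators of $\la_1, \ldots, \la_d$, so every $a_i = N\la_i$ lies in $\IZ_+$. Since the floor satisfies $\lfloor x + n \rfloor = n + \lfloor x \rfloor$ for any integer $n$, taking $x = N\delta_\Lambda - a_d$ and $n = a_d$ gives $a_d + \lfloor N\delta_\Lambda - a_d \rfloor = \lfloor N\delta_\Lambda \rfloor$, which is the asserted formula.

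For the concluding remark I would use that $\delta_\Lambda$ is rational: the characteristic polyhedron is a rational polyhedron, so $\delta_\Lambda = \inf\{ \Lambda(\x) \mid \x \in \cpoly Ju \}$ is attained at a vertex with rational coordinates. Writing $\delta_\Lambda = p/q$ in lowest terms, any $\rho$ divisible by $q$ makes $N\delta_\Lambda = \rho\,\mu\,p/q \in \IZ_+$; the floor is then redundant, so $\ell en(\cS_*) = N\delta_\Lambda$ and dividing by $N$ yields $\ell en(\cS_*)/N = \delta_\Lambda$. I expect no serious obstacle here, as the content is purely arithmetic once the two ingredients are in place; the only things to be careful about are that the two length counts ($a_d$ for $(\cS)$ and $c$ for the extension in $(\cS_*)$) are disjoint and exhaustive, and that $a_d$ is genuinely an integer, so that it may legitimately be pulled out of the floor.
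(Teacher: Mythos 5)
Your proposal is correct and follows exactly the route the paper intends: the observation is stated as a direct assembly of the length count $a_d$ for $(\cS)$ (Observations \ref{Obs:seq_bu} and \ref{Obs:tR_and_its_coordinates}), the count $c=\lfloor N\delta_\Lambda-a_d\rfloor$ from Construction \ref{Cons:S*}, and the integrality of $a_d=N\la_d$ guaranteed by the choice $N=\rho\mu$ in \eqref{eq:def_N}. Your added remark that $\delta_\Lambda$ is rational (when finite), so that a suitable $\rho$ with $N\delta_\Lambda\in\IZ_+$ exists, is a point the paper leaves implicit but is correct and harmless.
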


Since $ N = \rho \cdot \mu $, for $ \rho \in \IZ_+ $, we may send $ \rho $ to infinity and obtain as an immediate consequence of the previous observation:

\begin{Thm}
	\label{Thm:delta_from_S_*}
	Let the situation be as before.
	Then we have that
	$$
		\lim_{\rho \to \infty} \frac{\ell en (\cS_*)}{N} = \delta_\Lambda.
	$$
\end{Thm}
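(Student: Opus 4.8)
The plan is to reduce the assertion to the elementary limit
$$
\lim_{N\to\infty}\frac{\lfloor N\delta_\Lambda\rfloor}{N} = \delta_\Lambda,
$$
since all of the geometric content has already been extracted. Concretely, I would begin by recalling the formula established in the Observation preceding the theorem: for every $ \rho\in\IZ_+ $ the constructed sequence $ (\cS_*) $ has length
$$
\ell en(\cS_*) = \lfloor N\delta_\Lambda\rfloor, \qquad N = N(\rho) = \rho\mu,
$$
where $ \mu $ is the lowest common multiple of the denominators of $ \la_1,\ldots,\la_d $. Here it is worth recording that $ N\la_d = \rho\,(\mu\la_d)\in\IZ_+ $, which is exactly what makes the identity $ N\la_d + \lfloor N\delta_\Lambda - N\la_d\rfloor = \lfloor N\delta_\Lambda\rfloor $ used there valid, and that $ N = \rho\mu\to\infty $ as $ \rho\to\infty $.

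Assuming first that $ \delta_\Lambda < \infty $, I would apply the defining inequalities of the floor function,
$$
N\delta_\Lambda - 1 < \lfloor N\delta_\Lambda\rfloor \le N\delta_\Lambda,
$$
and divide by $ N>0 $ to obtain
$$
\delta_\Lambda - \frac{1}{N} < \frac{\ell en(\cS_*)}{N} \le \delta_\Lambda.
$$
Sending $ \rho\to\infty $, so that $ 1/N\to 0 $, the squeeze theorem gives $ \lim_{\rho\to\infty}\ell en(\cS_*)/N = \delta_\Lambda $. The remaining case is $ \cpoly Ju = \varnothing $, where $ \delta_\Lambda = \infty $; by the Remark following Construction \ref{Cons:S*} every center $ V(t,\ty^{(i)}) $ stays permissible, so $ (\cS_*) $ is infinite and $ \ell en(\cS_*) = \infty $ for each $ \rho $. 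Then $ \ell en(\cS_*)/N = \infty $ for all $ \rho $, and the limit equals $ \infty = \delta_\Lambda $ with the conventions of $ \IQinfty $.

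I do not anticipate a genuine obstacle here: the statement is purely the convergence $ \lfloor N\delta_\Lambda\rfloor/N\to\delta_\Lambda $, and the only point demanding any care is the bookkeeping observation above, namely that $ N $ runs through the integer multiples $ \rho\mu $ (so that $ N\to\infty $ and $ N\la_d\in\IZ $) rather than through all positive integers. All the substantive work — permissibility of $ (\cS) $ and $ (\cS_*) $, the near-point property, and the computation $ c=\lfloor N\delta_\Lambda - a_d\rfloor $ — was carried out in Proposition \ref{Prop:Seq_S_is_permissible}, Proposition \ref{Prop:Poly_behavior}, and Construction \ref{Cons:S*}.
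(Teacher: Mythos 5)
Your proposal is correct and follows the paper's own route: the paper derives $\ell en(\cS_*)=N\la_d+\lfloor N\delta_\Lambda-N\la_d\rfloor=\lfloor N\delta_\Lambda\rfloor$ in the Observation preceding the theorem and then declares the limit an immediate consequence of letting $\rho\to\infty$, which is exactly the floor-function squeeze you spell out. Your explicit treatment of the case $\cpoly Ju=\varnothing$ (where $c=\infty$ and the limit is $\infty=\delta_\Lambda$) is a welcome bit of added care, but it is the same argument.
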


\noindent 
In other words, we can recover $ \delta_\Lambda $ by taking the limit over $ \rho $ of a sequence of permissible blowing-ups that only depends on the choice of $ ( \la_1, \ldots, \la_d) $ (and $ J \subset R $ as it seems).
In particular, this is also true if $ \cpoly Ju = \varnothing $.

\medskip 

We now discuss the essential step for the proof of Theorem \ref{Thm:Intro}.
More precisely, we explain how the sequence $ (\cS_*) $ can be re-constructed solely by using data in $ A = R/J $.

\begin{Def}[\cite{CosRevista} {\bf B.1.3}]
	\label{Def:Flag_for_Lambda}
	Let $ (A,\fM , k = A/\fM ) $ be a local Noetherian ring (not necessarily regular)
	and set $ X := \Spec ( A) $. 
	Let $ ( v ) = (v_1, \ldots, v_d ) $ be a regular $ A $-sequence such that 
	the ring of the directrix of $ X' = \Spec(A') $, $ A' = A / \langle v \rangle $, at $ \xi' := V (\fM \cdot A') $ is
	$ 
	k. 
	$ 

	Given a positive linear form $ \Lambda: \IR^d \to \IR $ , 
	$ \Lambda(\x)= \la_1 \x_1 + \ldots + \la_d \x_d $, $ 0 < \la_1 \leq \ldots \leq \la_d $, as before, we associate to it a flag
	$$ 
	F_\bullet^\Lambda := F_{\bullet,v}^\Lambda  := \{  %
	F_1 \subset F_2 \subset \ldots \subset F_l %
	\}
	\ \ \ \mbox{ in } X  
	$$
	as follows (using the notations of Observation \ref{Obs:bary}):
	$$
	\begin{array}{c}
	F_i := V( \, v_{\al(i)}, v_{\al(i)+1}, \ldots, v_d \, ),
	\ \ \ 
	\mbox{ for } 1 \leq i \leq l = l(\Lambda). 
	\end{array} 
	$$
	We set $ I_{F_i} := \langle v_{\al(i)}, v_{\al(i)+1}, \ldots v_d \rangle \subset A $, for $ 1 \leq i \leq l $.
\end{Def}

Clearly, the linear forms $ \Lambda $ and $ \mathcal{N} \cdot \Lambda $, for any $ \mathcal{N} \in \IZ_+ $, provide the same flag in $ A $.
\\
Furthermore, if $ t $ is an independent variable, then the flag $ F^\Lambda_\bullet $ lifts to one in $ A[t] $, 
$$ 
	F[t]^\Lambda_\bullet = \{  %
	F_1[t] \subset \ldots \subset F_l[t] 
	\}, 
\ \ \ \ \ 
\mbox{ by setting }
\ F_i [t] := V (  I_{F_i}\cdot A[t] ) .
$$

\smallskip 

\begin{Constr}[cf.~\cite{CosRevista} {\bf B.1.3}]
	\label{Constr:blow-ups_L}
	Let $ (A,\fM , k  ) $ be a Noetherian local ring (not necessarily regular)
	and let $ ( v ) = (v_1, \ldots, v_d ) $ be a regular $ A $-sequence.%

	Put $ X := \Spec ( A) $.
	Let $ \Lambda : \IR^d \to \IR $ be a positive linear form defined by $ \la_1 \leq \ldots \leq \la_d $, as before.
	We fix $ N \in \IZ_+ $ such that $ a_i := N \la_i \in \IZ_+ $.
	The barycentric decomposition of the positive linear form $ N \Lambda $ provides $ b_1, \ldots, b_{l} \in \IZ_+ $ 
(Observation \ref{Obs:bary}).
	Using the flag $ F_1 \subset F_2 \subset \ldots \subset F_l \subset F_{l + 1} := X $  
	associated to $ N \Lambda $
	(which is the same as for $ \Lambda $), we construct a sequence of blowing-ups 
	\begin{equation}
	\label{eq:first_bu_seq}
	\begin{array}{c}
	X({0,0}) \leftarrow X({1,1}) \leftarrow X({1,2}) 
	\leftarrow  \ldots \leftarrow  
	X({1,b_1})
	\leftarrow 
	X({2,1})
	\leftarrow \ldots \leftarrow 
	X({2,b_2})
	\leftarrow \ldots \\
	\ldots \leftarrow 
	X(l, 1 ) \leftarrow \ldots 
	X({l, b_{l}}) .
	\end{array}
	\end{equation}
	
	We equip the index set 
	$$ 
	\varepsilon_0(\Lambda,N) := \{ (i,j) \in \IZ_+^2 \mid 
	1 \leq i \leq l+1 \, \wedge \, j = j(i) : 1 \leq j \leq b_i \} \cup \{ (0,0) \}.
	$$
	with the lexicographical order and denote by $ (i,j)_+ $ 
	(resp.~$(i,j)_- $) 
	the element following (resp.~preceding) $ (i,j) $, as before.
	For every $ ( i,j) \in \varepsilon_0(L,N) $, we define a heptuple
	$$
	H( i,j) := \big(
	\, 
	\widetilde{X}( i,j) , \, X( i,j), \, E( i,j), \, V( i,j), \, T( i,j), \, D( i,j), \, x( i,j) 
	\big),
	$$
	\begin{tabular}{lll}
			where &
			$ \widetilde{X}( i,j) $  & is a scheme, \\
			
			&$ X( i,j) $ & is an open, affine subscheme of $ \widetilde{X}( i,j) $,\\
			
			&$ E( i,j) $ & is a divisor in $ X( i,j) $, \\
			
			&$ V( i,j) $ & is a flag $ V( i,j)_1 \subset V( i,j)_2 \subset
			\ldots \subset V( i,j)_l \subset V( i,j)_{l + 1}  = X( i,j) $,
			\\
			
			&$ T( i,j) $ & is closed subset of $ E( i,j) $ that is either empty or irreducible,
			\\
			&& if $ T( i,j) \neq \varnothing $, we denote by $ \eta( i,j) $ the generic point of $ T( i,j) $,
			\\
			
			&$ D( i,j) $ & is an irreducible curve contained in $ X( i,j) $,
			\\
			
			&$ x( i,j) $ & is a closed point contained in $ D( i,j) $.
		\end{tabular}

	\smallskip

	\noindent 
	First, we introduce a new independent variable $ t $ and set 
	$$ 
	\begin{array}{lcl}
	\widetilde{X}({0,0})  &  = & X({0,0}) := \Spec (A[t]), 
	\\
	
	E({0,0}) & :=  & V(t), 
	\\

	V({0,0})_k & := &  V ( \langle v_{\al(k)}, v_{\al(k)+1}, \ldots, v_d \rangle A[t]) = V( I_{F_k} \cdot A[t] ),\ \
	\mbox{ for } 1 \leq k \leq l,
	\\
	
	V({0,0})_{l + 1} & := &  X_{0,0},
	\\
	
	T({0,0}) & := & V ( \langle t, v_1, \ldots, v_d \rangle A[t]),
	\\
	
	D({0,0}) & := &V (\fM\cdot A[t]),
	\\
	
	x({0,0}) & := & V (\langle \fM, t \rangle A[t]) = D({0,0}) \cap E({0,0})
	\end{array}
	$$
	The first center $ Y({0,0}) $ for blowing up is the closed point $ x({0,0}) $.
	For $ (i,j) \neq (0,0) $, suppose $ H(i,j) $ is constructed, 
	then the center for the next blowing-up is defined as 
	$$
	Y(i,j) := T(i,j) \cap V(i,j)_{k(i,j)},
	$$ 
	$$
	\mbox{where } \ \ \ \ \
	k(i,j) := \left\{ 
	\begin{array}{ll}
	i, & \mbox{if } j \leq b_i - 1 ,\\
	i+1, & \mbox{if } j = b_i . \\
	\end{array}
	\right.
	$$
	
	Let us define the heptuple $ H(i,j)_+ $ for $ {(i,j)} \neq (l, b_l) $.
	Let 
	$
	\widetilde{\pi_{i,j}} : \widetilde{X}(i,j)_+ \to X(i,j) 
	$
	be the blowing-up with center $ Y(i,j) $.
	\begin{center} 
		\begin{tabular}{ll}
			$ X(i,j)_+ $ & 
			is the open affine subset of $ \widetilde{X}(i,j)_+ $ complementary to 
			\\
			
			& the strict transform of $ E(i,j) $.\\
			
			& Let $
			{\pi_{i,j}} :
			{X}(i,j)_+ \to X(i,j) 
			$
			be the restriction of $ \widetilde{\pi_{i,j}} $ to $ {X}(i,j)_+ $.
			\\
			
			$ E(i,j)_+ $ & is the exceptional divisor of $ \pi_{i,j} $, i.e., $
			E(i,j)_+ = \pi_{i,j}^{-1} (Y(i,j)),
			$ \\
			
			$ (V(i,j)_+)_k $ &
			is the strict transform of $ V(i,j)_k $, for $ 0 \leq k \leq l + 1 $. \\
			&
			In particular,
			$
			(V(i,j)_+)_{l + 1}  = X(i,j)_+
			$.
		\end{tabular}
	\end{center} 
	
	\smallskip 
	
	Let $ \ep(i,j) $ be the generic point of $ Y(i,j) $
	and $ \Gamma(i,j) $ be the directrix of $ X(i,j) $ at 
	$ \ep(i,j) $,
	$ \Gamma(i,j)  = \Dir_{\ep(i,j)} (X(i,j)) $.
	Let $ \eta(i,j)_+ $ be the generic point of the closure of  
	$ \pi_{i,j}^{-1}(\ep(i,j) ) $ in $ X(i,j)_+ $, which corresponds to the generic point of $ \Proj( \Gamma(i,j)) $ 
	(see the canonical isomorphism at the beginning of the proof for \cite{CJS} Theorem 2.14, or \cite{GiraudEtude} 1.1.2, p.II-1).
	
	\begin{center} 
		\begin{tabular}{ll}			
			$ T(i,j)_+ $ & is $ \varnothing $ if $ \eta(i,j)_+ \notin X(i,j)_+ $ and otherwise $ T(i,j)_+ = \overline{  \eta(i,j)_+ } \subset  X(i,j)_+ $.
			\\
			
			$ D(i,j)_+ $ & is the strict transform of $ D(i,j) $ in $ X(i,j)_+ $. 
			\\
			
			$ x(i,j)_+ $ & is the unique closed point contained in $ D(i,j)_+ $ and lying above $ x(i,j) $, \\
			& i.e.,
			$ x(i,j)_+ : = D(i,j)_+ \cap E(i,j)_+ $ .
		\end{tabular}
	\end{center} 
	
	\smallskip
	
	Applying this procedure, we obtain \eqref{eq:first_bu_seq}.
	We continue and extend the latter by	
	\begin{equation}
	\label{eq:second_bu_seq}
	X(l, b_{l}) \leftarrow 
	X({l + 1,1}) \leftarrow 
	\ldots 
	\leftarrow 
	X({l + 1,j}) \leftarrow 
	\ldots 
	\leftarrow 
	X({l + 1, c }).
	\end{equation}
	We set
	$
	\varepsilon(\Lambda,N) := \varepsilon_0 (\Lambda,N) \cup \{(l + 1, j ) \mid 1 \leq j \leq c \}.
	$
	With the convention $c=\infty$ when the second sequence is infinite.
	
	The candidate for the center of the first additional blowing-up is 
	$$
	Y({l ,b_l}) := T({l ,b_l}) \cap V({l ,b_l})_{l + 1 } = T({l ,b_l}).
	$$
	
	For $  (i, j ) \geq ( l ,b_l ) $, we associate to $ H({i,j}) $ a test $ OC({i,j}) $ 
	($ OC $ from French ``on continue...")
	that indicates whether $ ( i,j) $ is the last index.
	We either have $ OC({i,j}) = V $ (``vrai", i.e., $ (i,j)_+ \in \varepsilon(\Lambda,N) $, i.e., $ H({i,j})_+ $ exists)
	or $ OC({i,j}) = F $ (``faux", i.e., we stop).
	
	We define $ OC({i,j}) $ to be $ F $,
	\ \ \parbox[t]{220pt}{
		if $ x({i,j}) \notin T({i,j}) $,\\ 
		or if $ Y({i,j}) $ is not permissible for $ X({i,j}) $ at $ x({i,j}) $,\\
		or if $ Y({i,j}) $ is empty,\\
		or if $ Y({i,j}) $ is not irreducible.
	}
	
	\smallskip
	
	\noindent 
	Otherwise, we set $ OC({i,j}) = V $. 
	If $ OC({i,j}) = V $, then we construct $ H({i,j})_+ $ as above and repeat the last step 
	(i.e., we test whether $ OC(i,j)_+ = V $ and if so, we blow up).

	\rightline{\em (End of Construction \ref{Constr:blow-ups_L})}
\end{Constr}

\begin{Rk}\label{rem:directrix}
	Let $ (R,M, k) $ be a regular local ring, $ J \subset R $ be a non-zero ideal and
	$ (u,y) = ( u_1, \ldots, u_d, y_1, \ldots, y_r ) $ be a regular system of parameters for $ R $.
	As before, $ R' = R/ \langle u \rangle $, $ M' = M \cdot R $ and $ ( y') = (y_1', \ldots, y_r' )$ are the images of $ (y) $ in $ R' $.
	Further, set $ A := R/J $, $ \fM:= M \cdot A $, and let $ (v)  = (v_1, \ldots, v_d) $ be the images of $ ( u ) $ in $ A $.
	The following conditions are equivalent:
	\begin{enumerate}
		\item 
		condition \eqref{eq:cond_Dir(J')} holds for $ ( u ) $,
		i.e., 
		there is no proper $ k $-subspace $ T \subsetneq \gr_{M'}(R')_1 $ such that
		$
		(\Ini_{M'} (J') \cap k[T] ) \gr_{M'} (R') = \Ini_{M'} (J').
		$
		
		\smallskip 
		
		\item 
		the directrix of $ J' \subset R' $ at $ M' $ is $ V(Y_1', \ldots, Y_r') $.
		
		\smallskip 
		
		\item 
		the ring of the directrix of $ A' := A/ \langle v \rangle $
		at $ \fM' = \fM \cdot A' $ is $ k $.
	\end{enumerate}
	The equivalence of (1) and (2) was subject of Remark \ref{Rk:(cond_Dir(J')_equiv_V(Y') directrix}
	and  (2) $ \Leftrightarrow $ (3) is immediate.
	
	Suppose there exists a $ ( u ) $-standard basis $ ( f ) = (f_1 ,\ldots, f_m ) $ for $ J $ such that \eqref{eq:assump_ord_initial} holds true, i.e.,
	$
		\ord_{M'} (f_\ell') = \ord_{M} (f_\ell) 
	$ and $
	\ini_M(f_\ell)= \ini_{M'} (f_\ell') = \ini_0(f_\ell)\in k[Y],
	$ 
	for $ 1 \leq \ell \leq m.
	$
	Then the above is furthermore equivalent to
	\begin{enumerate}
		\item[(4)] 
		The ring of the directrix of $ A $ is $k[V_1, \ldots, V_d]$,
		where $ V_i := \ini_{\fM}(v_i)$, for $ 1 \leq i \leq d $. 
	\end{enumerate} 
\end{Rk}

\begin{Prop}\label{Prop:Same_sequences}
	Let $ (R,M, k) $ be a regular local ring, $ J \subset R $ be a non-zero ideal and
	$ (u) = ( u_1, \ldots, u_d ) $ be a system of elements in $ R $ such that \eqref{eq:cond_Dir(J')} holds.
	Let $ \Lambda : \IR^d \to \IR $ be a positive linear form defined by $ \la_1, \ldots, \la_d \in \IQ_+  $ with $ \la_1 \leq \ldots \leq \la_d $,
	and fix $ N \in \IZ_+ $ such that $ N\la_i \in \IZ_+ $.
	
	Set $ A := R/J $ and denote by $ (v) = (v_1, \ldots, v_d) $ the images of $ ( u ) $ in $ A $.
	Assume that condition $(*)$ and \eqref{eq:assump_ord_initial} hold true.
	Then the sequence of blowing-ups for $ X = \Spec(A) $ 
	obtained by \eqref{eq:first_bu_seq} and \eqref{eq:second_bu_seq}
	coincides with the restriction of $ (\cS_*) $ (in $ \Spec(R) $) to $ X \subset \Spec(R) $.
\end{Prop}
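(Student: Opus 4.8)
The plan is to argue by induction on the index $ (i,j) \in \varepsilon(\Lambda, N) $, proving that at each stage the heptuple $ H(i,j) $ produced by the intrinsic Construction~\ref{Constr:blow-ups_L} on $ X = \Spec(A) $ is realized, through the closed immersion $ X \subset \Spec(R(i,j)) $, by the combinatorial data defining $ (\cS_*) $. Precisely, I would carry along as inductive hypothesis that the strict transform of $ X $ under $ (\cS_*) $ agrees with the abstract chart $ X(i,j) $, that $ E(i,j) = V(t) $ on both sides, that the flag entry $ V(i,j)_k $ is the strict transform of $ V(v_{\al(k)}, \ldots, v_d) $ (equivalently, the trace on $ X $ of $ V(u_{\al(k)}, \ldots, u_d) $), that $ T(i,j) $ is the trace on $ X \cap E(i,j) $ of the strict transform of $ V(y) $, and that $ x(i,j) $ and $ D(i,j) $ are the traces of the origin and of the $ t $-axis of $ \Spec(R(i,j)) $.

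The base case $ (0,0) $ is immediate: both sides begin with $ \Spec(A[t]) $, resp.~$ \Spec(R[t]) $, blow up the closed point cut out by $ \langle \fM, t \rangle $, resp.~$ \langle M, t \rangle $, and pass to the chart complementary to $ V(t) $, i.e.\ the $ t $-chart $ R(1,1) $. For the inductive step the decisive point is that the abstract center $ Y(i,j) = T(i,j) \cap V(i,j)_{k(i,j)} $ equals the trace on $ X $ of the combinatorial center of \eqref{eq:center_bu_to_(i,j)}. The factor $ V(i,j)_{k(i,j)} $ supplies the directions $ v_{\al(k(i,j))}, \ldots, v_d $, and since $ k(i,j) = i $ for $ j < b_i $ and $ k(i,j) = i+1 $ for $ j = b_i $, this matches the starting index $ \al(i) $ or $ \al(i+1) $ of the combinatorial center exactly as dictated by whether $ (i,j)_+ = (i,j+1) $ or $ (i+1,1) $. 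The factor $ T(i,j) $ supplies $ V(t) $ together with the $ y $-directions; here one uses that, by Proposition~\ref{Prop:Seq_S_is_permissible}, the $ x(i,j) $ are near to $ x(0,0) $ and, by \eqref{eq:initial_mod_t}, the strict transforms obey $ f_{\ell,i,j} \equiv F_\ell(Y') \bmod t $ with $ F_\ell(Y') \in k[Y'] $, so that the directrix $ \Gamma(i,j) = \Dir_{\ep(i,j)}(X(i,j)) $ is $ V(Y') $ at every stage (the embedding-free content of Remark~\ref{rem:directrix}, read off via the appendix algorithm). Hence $ \Proj(\Gamma(i,j)) $ singles out the $ y $-directions and $ T(i,j) = V(t, y') $. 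Intersecting, $ Y(i,j) = V(t, v_{\al(k)}, \ldots, v_d, y') $ is exactly the trace of the combinatorial center, and since both constructions then pass to the chart complementary to the exceptional divisor, functoriality of strict transforms under $ X \subset \Spec(R(i,j)) $ propagates the whole dictionary to $ H(i,j)_+ $.

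It remains to treat the tail \eqref{eq:second_bu_seq} and the termination. For $ i = l+1 $ the flag index is $ l+1 $, so $ V(i,j)_{l+1} = X(i,j) $ and the center degenerates to $ Y(i,j) = T(i,j) = V(t, y') $, which is precisely the center $ V(t, \ty^{(i)}) $ of Construction~\ref{Cons:S*}; the congruence \eqref{eq:initial_mod_t_second} keeps the directrix equal to $ V(Y') $ so the identification persists. The two stopping rules must also coincide: the abstract test returns $ OC(i,j) = F $ precisely when $ Y(i,j) $ fails to be permissible for $ X(i,j) $, which by Proposition~\ref{Prop:normal_flatness_and_generators} occurs exactly when the order of the strict transform drops, i.e.\ when the minimal $ t $-exponent $ N\delta_\Lambda - a_d $ has been consumed; this is the same integer length $ c = \lfloor N\delta_\Lambda - a_d \rfloor $ that terminates $ (\cS_*) $ in Construction~\ref{Cons:S*}.

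The step I expect to be the main obstacle is the identification of the intrinsically defined $ T(i,j) $ --- built from $ \Proj $ of the directrix $ \Gamma(i,j) $ at the generic point of the center --- with the combinatorial $ y $-directions of the explicit chart. This rests on the directrix remaining equal to $ V(Y') $ at every stage and along every center, which in turn depends on near-ness (order preservation, Proposition~\ref{Prop:Seq_S_is_permissible}), on the congruences \eqref{eq:initial_mod_t} and \eqref{eq:initial_mod_t_second}, and on the embedding-independence of the directrix recorded in Remark~\ref{rem:directrix}. Matching the permissibility-driven termination of the intrinsic construction with the explicit numerical length of $ (\cS_*) $ via Proposition~\ref{Prop:normal_flatness_and_generators} is the second delicate point.
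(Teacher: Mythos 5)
Your skeleton (induction along $(i,j)$, matching the abstract center $T(i,j)\cap V(i,j)_{k(i,j)}$ with the combinatorial center \eqref{eq:center_bu_to_(i,j)}) is the same as the paper's, which explicitly models the proof on \cite{CosRevista} Lemme B.2.7.2. You also correctly locate the crux: identifying $T(i,j)$, which is built from $\Proj$ of the directrix $\Gamma(i,j)=\Dir_{\ep(i,j)}(X(i,j))$ at the \emph{generic point} of the center, with the $y$-directions of the explicit chart. But at exactly that point your argument has a gap. You justify ``the directrix is $V(Y')$ at every stage and along every center'' by near-ness, the congruences $f_{\ell,i,j}\equiv F_\ell(Y')\bmod t$, and Remark~\ref{rem:directrix} --- yet Remark~\ref{rem:directrix} concerns the directrix at the \emph{closed} point only, and the congruence mod $t$ does not by itself determine the directrix in $\gr_{\ep}(R)$, a polynomial ring over the residue field $k(U_1,\ldots,U_{s-1})$ of the non-closed point $\ep$. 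This is precisely the step the paper flags as failing in positive and mixed characteristic: the characteristic-zero argument rests on semicontinuity of the codimension of the directrix along the Samuel stratum, which is only available for the ridge in general. The paper bridges the gap with Lemma~\ref{Lem:key}, which computes $I(\Dir_\ep(X))$ via the appendix's ridge-to-directrix algorithm (Hasse--Schmidt derivations plus extraction of $p$-th roots) and, crucially, uses the \emph{preparedness} (non-solvability) of the vertices of $\poly{f}{u,t}{y}$ on the relevant face --- a hypothesis your proposal never invokes.

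Moreover, your blanket claim that $\Gamma(i,j)=V(Y')$ is actually false in the boundary case $\delta_\ep=1$ (which occurs at the last step of \eqref{eq:second_bu_seq} whenever $N\delta_\Lambda-a_d\in\IZ$): there Lemma~\ref{Lem:key}(3) shows $\ini_\ep(t)\in I(\Dir_\ep(X))$, so that after the (still permissible) blow-up one has $x_+\notin T_+$, and the intrinsic construction halts through the test $x(i,j)\notin T(i,j)$ rather than through failure of permissibility. Your termination argument, which asserts that $OC(i,j)=F$ ``precisely when $Y(i,j)$ fails to be permissible,'' both misstates the test and misses this branch; a priori the directrix at the $\delta_\ep=1$ center could even be tilted into the $t$-direction, $I(\Dir_\ep(X))=\langle\ini_\ep(y_j)+\ini_\ep(t)P_j\rangle$, and ruling that out is exactly where the well-preparedness of the face $\cF_\Lambda$ enters (the contradiction at the end of the proof of Lemma~\ref{Lem:key}). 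In short: right architecture, right identification of the obstacle, but the obstacle itself is asserted rather than overcome; you need the content of Lemma~\ref{Lem:key} (or an equivalent computation of the directrix at the generic points of the centers) to complete the proof.
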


This result is the analog of \cite{CosRevista} Lemme  {\bf B.2.7.2} (there, over $ \mathbb{C} $).
	The proof is identical except at a crucial point: the argument given in \cite{CosRevista} p.~88 to compute $ \ep(i,j) $  the generic point of $ Y(i,j) $
	and  the directrix of $ X(i,j) $ at 
	$ \ep(i,j) $
	{does not work}. Indeed{,} this argument is based on the semi-continuity of the codimension of the directrix along the Samuel stratum, it is true in characteristic~0,   may be false in our general setting.  The semi-continuity  is true for the codimension of the ridge \cite{CPS}, but ridge and directrix do not coincide in general. The following lemma bridges this gap.

	\begin{Lem} 
		\label{Lem:key}
		Let $ R $ be a ring which is regular at a maximal ideal $M= \langle  u, t, y \rangle $, 
		where $ (u,t,y) = ( u_1, \ldots, u_d, t, y_1, \ldots, y_r ) $ corresponds to
	 a regular system of parameters for $ R_M $.
	 Let $ J  \subset R $ be an ideal in $ R $ and let $ (f) = (f_1, \ldots, f_m) $ be generators for $ J $ in $ R $ which form a $ ( u, t ) $-standard basis for $ J \cdot R_M $ in $ R_M $ 
	 and such that 
	(using $ R' : =  R / \langle u, t \rangle $ and $ M' = M \cdot R' $)
	\begin{equation}
	\label{eq:assump_ord_initial_modif}
		\ord_{M'} (f_\ell') = \ord_{M} (f_\ell) 
		\ \mbox{ and } \ 
		 \ini_{M'} (f_\ell') = \ini_0(f_\ell)\in k[Y],
		\ \mbox{for } 1 \leq \ell \leq m.
	\end{equation} 
	Let $Y := V (\fp ) \subset \Spec(R) $, where $ \fp = \langle u_s,\ldots,u_d, t, y_1, \ldots, y_r \rangle \subset R $, for some $ s \geq 1 $,
	and denote by  $\ep$ its generic point. 
	Assume that \eqref{eq:cond_Dir(J')} holds and 
	that the vertices $ \x = (\x_1,\ldots,\x_d, \x_t )\in \poly{f}{u,t}{y} $ with $  \x_s+\ldots+\x_d + \x_t$ minimal are prepared. 
	Let $\delta_\ep  \in \IQ_{\geq 0} $ be this minimum. 
	Assume that $ R/ \langle t \rangle \cong k[U,Y] $,
	where $ k = R/M $ and that $f_\ell \equiv F_\ell(Y) \mod t $,
	for $ F_\ell(Y) := \ini_0 (f_\ell) $ and $ 1 \leq \ell \leq m $.
	
	Then the following hold:
	
	\begin{enumerate}

		\item 
		$Y$ is permissible for $ X : = V(J) $ at $ x_0 := V(M) $ 
		if and only if $\delta_\ep \geq 1$.
		
		\item  When $\delta_\ep >1$, then
		$$
			\ini_\ep (y_1,\ldots,y_r) = I( \Dir_\ep (X))\subset \gr_\ep(R).
		$$ 
		where $ I( \Dir_\ep(X)) $ is the ideal of the directrix of $ X $ at the generic point  $\ep$ of $Y$.
		(Here, $ \Ini_\ep(.) $ denotes the initial form at the maximal ideal after localizing at $ \fp $).
		
		\item  
		When $\delta_\ep =1$, then
		\begin{equation}
		\label{eq:t_in_Idir}
			in_\ep (t, y_1,\ldots,y_r)\subset I( \Dir_\ep(X))\subset \gr_\ep(R).
		\end{equation}
		Furthermore, if $\pi: X'\to X $ is the blowing-up of $ X $ along $Y$,  let $X_+$ be the $t$-chart and $x_+$ the point lying above $ x_0 $ and on the strict transform of $ V(u,y)$.  
		Let $ \eta_+ $ be the generic point of the closure of  
	$ \pi^{-1}(\ep) $, which corresponds to the generic point of $ \Proj( \Dir_\ep(X)) $, then 
	$$
		x_+\not\in T_+
		\ \mbox{ where }
		T_+:=\overline{\eta_+ }\subset X.
	$$

	\end{enumerate}
	
	\end{Lem}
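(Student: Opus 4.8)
\emph{Overall approach and Part (1).} I would prove the three assertions in turn, reducing each to the explicit shape of the initial forms of the $f_\ell$ at the generic point $\ep$ of $Y$, and isolating one genuinely characteristic\nobreakdash-$p$ input. For (1), note that $Y=V(\fp)$ is a coordinate subspace, hence regular, and that each $f_\ell$ lies in $\fp$ (as $n_\ell\geq 1$), so $Y$ contains no component of $X$; thus by Proposition~\ref{Prop:normal_flatness_and_generators} permissibility at $x_0$ is equivalent to normal flatness, i.e.\ to $f_\ell\in\fp^{n_\ell}$ for a standard basis. For a monomial $u^At^ey^B$ with $|B|<n_\ell$ the order along $\fp$ is $A_s+\cdots+A_d+e+|B|$, and the corresponding point of $\poly{f}{u,t}{y}$ is $(A,e)/(n_\ell-|B|)$, on which the form $\x_s+\cdots+\x_d+\x_t$ takes the value $(A_s+\cdots+A_d+e)/(n_\ell-|B|)$. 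Hence $f_\ell\in\fp^{n_\ell}$ precisely when each such value is $\geq 1$, i.e.\ when $\delta_\ep\geq 1$ (monomials with $|B|\geq n_\ell$ automatically have $\fp$-order $\geq n_\ell$). The hypothesis that the vertices realizing $\delta_\ep$ are prepared guarantees $\delta_\ep$ cannot be lowered by a change of standard basis, so the equivalence holds for all standard bases and (1) follows.

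\emph{Part (2): the computation.} I would localize at $\fp$; then $u_1,\dots,u_{s-1}$ become units, the residue field is $k(\ep)$, and $\gr_\ep(R)=k(\ep)[U_s,\dots,U_d,T,Y]$ with $Y=(Y_1,\dots,Y_r)$. The hypothesis $R/\langle t\rangle\cong k[U,Y]$ with $f_\ell\equiv F_\ell(Y)\bmod t$, together with the fact that every coefficient $C_{A,e,B}$ is a unit or zero, forces each monomial of $f_\ell$ with $t$-exponent $e=0$ to be a pure $y^B$ with $|B|=n_\ell$; thus $f_\ell=\widehat{F_\ell}(y)+t\,g_\ell$ for a unit-coefficient lift $\widehat{F_\ell}$ of $F_\ell$. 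When $\delta_\ep>1$, every monomial with $|B|<n_\ell$ has $\fp$-order $>n_\ell$, and every monomial divisible by $t$ has $\fp$-order $\geq 1+|B|>n_\ell$ unless it already sits in $\widehat{F_\ell}$; hence $\ini_\ep(f_\ell)=F_\ell(Y)$. Since $(f)$ is a standard basis this yields $\Ini_\ep(X)=\langle F_1(Y),\dots,F_m(Y)\rangle$, so $\Dir_\ep(X)\supseteq V(Y_1,\dots,Y_r)$.

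\emph{The main obstacle.} What remains in (2) is the reverse inclusion $\Dir_\ep(X)=V(Y_1,\dots,Y_r)$, i.e.\ that the forms $F_\ell$ still require all $r$ variables $Y_1,\dots,Y_r$ after base change from $k$ to $k(\ep)$. This is exactly where the argument of \cite{CosRevista} breaks: the codimension of the directrix is not semicontinuous in positive characteristic, since an inseparable extension can collapse it (for instance $Y_1^p+c\,Y_2^p$ becomes the $p$-th power of a single linear form once $c^{1/p}$ is available). The resolution I would use is that here $k(\ep)=\mathrm{Frac}(R/\fp)$ is a \emph{separable}, essentially purely transcendental, extension of $k$: because $\fp$ is generated by part of a regular system of parameters and $R/\fp$ is regular with residue field $k$, the field $k$ is relatively algebraically closed in $k(\ep)$, so no new $p$-th roots of elements of $k$ appear. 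Since the directrix commutes with separable field extensions (whereas, as recalled via \cite{CPS}, it is the ridge that commutes with arbitrary extensions), $\Dir_\ep(X)$ is the base change of $\Dir_{x_0}(X)=V(Y)$ and equals $V(Y_1,\dots,Y_r)$. This is the step I expect to demand the most care.

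\emph{Part (3).} When $\delta_\ep=1$ the same computation gives $\ini_\ep(f_\ell)=F_\ell(Y)+T\,h_\ell$ with $h_\ell\in\gr_\ep(R)$, so every $\ini_\ep(f_\ell)$ lies in $\langle T,Y_1,\dots,Y_r\rangle$ and therefore $\langle T,Y_1,\dots,Y_r\rangle\subseteq I(\Dir_\ep(X))$, which is \eqref{eq:t_in_Idir}; the $Y$-part uses the separability argument above, and the $T$-part uses that the minimal face is prepared, so $T$ genuinely occurs and cannot be absorbed. Finally, $\Dir_\ep(X)\subseteq V(T,Y)$ means $\Proj(\Dir_\ep(X))\subseteq\{T=0\}$ inside the exceptional $\IP(\fp/\fp^2)$ over $\ep$, with homogeneous coordinates $[U_s:\cdots:U_d:T:Y_1:\cdots:Y_r]$, whereas the point $x_+$, lying in the $t$-chart on the strict transform of $V(u,y)$, is the vertex $[0:\cdots:0:1:0:\cdots:0]$ with $T\neq 0$. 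Hence $x_+\notin\{T=0\}\supseteq T_+=\overline{\eta_+}$, which is the required statement.
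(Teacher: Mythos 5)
Your treatment of parts (1) and (2) follows essentially the paper's own route (reduce to $f_\ell\in\fp^{n_\ell}$ via Proposition \ref{Prop:normal_flatness_and_generators}; compute $\ini_\ep(f_\ell)=F_\ell(Y)$ when $\delta_\ep>1$; use that $k(\ep)=k(U_1,\ldots,U_{s-1})$ is separable over $k$ so the directrix does not drop, which is what the paper extracts from \eqref{eq:cond_Dir(J')} and \cite{CJS} Lemma 1.10(3)(i)). Part (3), however, contains a genuine gap, and it is precisely the step for which the lemma exists. First, the deduction ``every $\ini_\ep(f_\ell)$ lies in $\langle T,Y_1,\ldots,Y_r\rangle$ and therefore $\langle T,Y_1,\ldots,Y_r\rangle\subseteq I(\Dir_\ep(X))$'' runs the containment backwards: exhibiting generators of $\Ini_\ep(J)$ inside $k(\ep)[T,Y]$ only yields $I(\Dir_\ep(X))\subseteq\langle T,Y\rangle$, since $I(\Dir)$ is generated by a \emph{minimal} space of linear forms in which generators can be written. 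The content of \eqref{eq:t_in_Idir} is exactly the opposite, non-formal inclusion. Second, the separability argument no longer applies to the $Y$-part when $\delta_\ep=1$: the initial forms at $\ep$ are $F_\ell(Y)+T h_\ell$, which are not the base change of forms over $k$ in the $Y$'s alone, so one cannot identify $\Dir_\ep(X)$ with the base change of $\Dir_{x_0}(X)$. What one actually gets from \eqref{eq:cond_Dir(J')} is only the congruence $\ini_\ep(y_j)\in I(\Dir_\ep(X))\bmod\langle\ini_\ep(t)\rangle$, i.e.\ elements $\ini_\ep(y_j)+\ini_\ep(t)P_j\in I(\Dir_\ep(X))$; recovering $\ini_\ep(y_j)$ itself requires first knowing $\ini_\ep(t)\in I(\Dir_\ep(X))$.

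That last membership is the crux, and ``preparedness means $T$ cannot be absorbed'' is an assertion, not an argument. The paper proves it by contradiction: using the appendix's computation of the directrix from the ridge (Hasse--Schmidt derivations, derivations with respect to an absolute $p$-basis of $k(U_1,\ldots,U_{s-1})$, and extraction of $p$-th roots) applied to
$\ini_\ep(f_\ell)=F_\ell(\ini_\ep(y))+\ini_\ep(t)\sum_{|B|<n_\ell}G_{\ell,B}\,\ini_\ep(y)^B$,
one obtains the elements $\ini_\ep(y_j)+\ini_\ep(t)P_j$ of $I(\Dir_\ep(X))$; if $\ini_\ep(t)\notin I(\Dir_\ep(X))$, comparing $F_\ell(\ini_\ep(y)+\ini_\ep(t)P)$ with $\ini_\ep(f_\ell)$ shows the difference, being divisible by $\ini_\ep(t)$ yet lying in $k(U_1,\ldots,U_{s-1})[V]$, must vanish, so these $r$ elements generate $I(\Dir_\ep(X))$; lifting them to $z_j\in R$ with $\ini_\ep(z_j)=\ini_\ep(y_j)+\ini_\ep(t)P_j$ produces a coordinate change after which all vertices of $\poly{f}{u,t}{z}$ satisfy $\x_s+\cdots+\x_d+\x_t>1$. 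This exhibits the vertices on the face $\x_s+\cdots+\x_d+\x_t=\delta_\ep=1$ as solvable, contradicting the preparedness hypothesis. Without this dissolution argument your proof of \eqref{eq:t_in_Idir} --- and hence of the geometric conclusion $x_+\notin T_+$, whose derivation from \eqref{eq:t_in_Idir} you do carry out correctly --- is incomplete.
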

	
	\smallskip

	\begin{Rk}		
	Following \cite{CosRevista} Lemme  {\bf B.2.7.2}, the proof of Proposition~\ref{Prop:Same_sequences} is {achieved} by induction  following the sequence of blowing-ups. Thanks to  Lemma~\ref{Lem:R(i,j)/t_polyring} and equation~\eqref{eq:initial_mod_t}, the {hypothesis} of the previous lemma, {holds} at each blowing-up, except for the first one which is centered at the origin. The ``explicitation'' \cite{CosRevista}~{{\bf B.2.8}--{\bf B.2.10}} follows  straightforwardly. 
\end{Rk}

\begin{proof}[Proof of Lemma \ref{Lem:key}]
	We first prove (1) and (2). 
	By localizing $ R $ at $ \fp $, we may assume without loss of generality that $ Y = x_0 $ is the closed point and $ \fp = M $.
	Then $\delta_\ep$ is the usual $\delta$ of Corollary~\ref{Cor:Intro}(3).
	Note that then the condition on the preparedness is that all vertices on the face defined by $  \x_1+ \ldots + \x_d + \x_t $ minimal are prepared. 
	
	For $ \ell \in \{ 1, \ldots, m \} $, 
	we expand $ f_\ell  = \sum C_{A,B,d,\ell}\,  u^A\, t^d \,y^B $, $ C_{A,B,d,\ell} \in R^\times \cup \{ 0 \} $ 
	and put $ n_\ell := \ord_M (f_\ell) $. 
	We have that $ f_\ell \in M^{n_\ell} $ if and only if 
	$  |A|+d +|B| \geq n_\ell $, for all $ (A,B,d) $ with $ C_{A,B,d,\ell} \neq 0 $,
	which is further equivalent to $ \delta_\ep \geq 1 $.

	Suppose $ \delta_\ep > 1 $.
	Then \cite{CJS} Lemma 7.4(2) implies that $ \ini_M (f_\ell) = \ini_0 (f_\ell) \in k[Y] $, for all $ \ell $.
	As in the proof of Proposition \ref{Prop:Seq_S_is_permissible}, \cite{CJS} Corollary  7.17 and 
	Proposition \ref{Prop:normal_flatness_and_generators}
	provide that the blowing-up is permissible.
	
	On the other hand, assume $ \delta_\ep = 1 $. 
	By \cite{CJS} Lemma 7.4(3), we have $ \ini_M(f_\ell) = \ini_{\delta_\ep} (f_\ell) $, for all $ \ell $.
	Note that $ (f) $ being a $ (u,t) $-standard basis implies in the given situation that $ ( f) $ is also a standard  basis for the ideal.
	Hence Proposition \ref{Prop:normal_flatness_and_generators} and the above yield that the blowing-up is permissible.
	This completes the proof for (1).
	
	\smallskip 
	
	Furthermore, $ \delta_\ep > 1 $ provides that the initial at $ \epsilon $ is $ \ini_0 (f_\ell) $ for all $ \ell $.
	By \eqref{eq:cond_Dir(J')} and \cite{CJS}~Lemma~1.10(3)(i), part (2) of the lemma follows.
	
	\smallskip 
	
	It remains to prove part (3). 
	As $x_+$ is not on the strict transform of div$(t)$, \eqref{eq:t_in_Idir} implies the rest of part (3).
	By definition, $ \gr_\ep ( R) $ is the graded ring at the maximal ideal after localizing $ R $ at $ \fp $ and hence
	$$
		\gr_\ep(R) \cong k(U_1,\ldots,U_{s-1})\big[\ini_\ep (u_s),\ldots,\ini_\ep (u_d), \ini_\ep (t),\ini_\ep (y_1), \ldots, \ini_\ep (y_r) \big] ,
	$$
	where $(U_1,\ldots,U_{s-1})=(u_1,\ldots,u_{s-1}) \mod tR $ (isomorphism of the hypothesis).

	The hypothesis  $f_\ell \equiv F_\ell(Y)$~mod~$t$ provides that the initial form of $ f_\ell $ at $ \ep $ coincides with the $ 0 $-initial form of $ f_\ell $ modulo $ tR $,
	$$
		\ini_\ep (f_\ell ) \equiv F_\ell (Y) \mod t R,
	$$
	and, by  \cite{CJS}~Lemma~1.10(3)(i), we have
	$$ %
		\ini_\ep (y_1,\ldots,y_r)\subset I( \Dir_\ep(X)) \mod \langle \ini_\ep (t) \rangle 
	$$ %
	Then, by the appendix~\ref{app:calculdeladirectrice}, $I( \Dir_\ep(X))$ is computed by applying to  the initials of $f_\ell $ the following:  extraction of $p$-roots, Hasse-Schmidt derivations and derivations with respect to an absolute $p$-basis of $k(U_1,\ldots,U_{s-1})$, which may be chosen as an absolute $p$-basis of the field $k$ completed with $(U_1,\ldots,U_{s-1})$. 
	Noting that     
	$$
	\begin{array}{c} 
		\ini_\ep(f_\ell)=F_\ell(\ini_\ep(y))+\ini_\ep(t) \cdot \sum_{|B| < n_\ell} G_{\ell,B} \, \ini_\ep(y)^B, \\[5pt]
		\mbox{for some}\  G_ {\ell,B} \in  {k(U_1,\ldots,U_{s-1})}[ \ini_\ep(u_s),\ldots,\ini_\ep(u_d), \ini_\ep (t) ]	,
	\end{array}
	$$ 
	we get for $1\leq j \leq r$:
	\begin{equation}\label{eq:t_in_Idir?_2}
		\ini_\ep (y_j)+\ini_\ep(t)P_j \in I( \Dir_\ep(X)),
		\ \ 
		P_i\in {k(U_1,\ldots,U_{s-1})}[ \ini_\ep(u_s),\ldots,\ini_\ep(u_d), \ini_\ep (t)]	
	\end{equation}
	
	Suppose that  \eqref{eq:t_in_Idir} is wrong: by \eqref{eq:t_in_Idir?_2}, 
	$F_\ell(\ini_\ep (y)+\ini_\ep(t)P)\in k(U_1,\ldots,U_{s-1})[V]$,
	where $  k(U_1,\ldots,U_{s-1})[V] $ is the smallest $ k(U_1,\ldots,U_{s-1})$-algebra containing generators of the ideal $I( \Dir_\ep(X))$.
	Note that for all $ \ell$, 
	$1\leq \ell \leq m$, 
	$$
		F_\ell (\ini_\ep (y)+\ini_\ep(t)P)-\ini_\ep(f_\ell)\in \ini_\ep(t) \cdot k(U_1,\ldots,U_{s-1})[V],
	$$ 
	hence all difference must be equal to $ 0$, 
	else $\ini_\ep(t)\in k(U_1,\ldots,U_{s-1})[V]$. 
	Therefore: 
	$$ %
		\langle 
			\ini_\ep (y_1)+\ini_\ep(t)P_1,\ldots, \ini_\ep (y_r)+\ini_\ep(t)P_r
		\rangle 
		=I( \Dir_\ep(X)).
	$$ %
  We can define $ z_j\in R $, for $1\leq j \leq r$, with $\ini_\ep(z_j)= \ini_\ep (y_j)+\ini_\ep(t)P_j$. 
  Then the vertices of  $ \poly{f}{u,t}{z} $ verify $\x_s+\ldots+\x_d+\x_t>1$, 
  which contradicts the preparation of the vertices of $ \poly{f}{u,t}{y} $ with $\x_s+\ldots+\x_d + \x_t =\delta_\ep=1$.	
\end{proof}

Finally, let us discuss the connection between the ring of functions of the divisors $ \mathrm{div}(t) $ and the graded rings.

\begin{Obs}
	Let the situation be as in Proposition \ref{Prop:Same_sequences}.
	Let $ ( y ) = (y_1, \ldots, y_r) $ be elements in $ R $ such that $ ( u, y ) $ is a regular system of parameters for $ R $.
	Let $ (f) = (f_1, \ldots, f_m) $ be a $ ( u ) $-standard basis for $ J $
	such that  \eqref{eq:assump_ord_initial} holds.
	Denote by $ f_{\ell,*} $ the strict transform of $ f_\ell $ in $ \tR_* $ (i.e., at the end of the sequence $(\cS_*)$)
	and recall that $ n_\ell := \ord_M(f_\ell) $, for $ 1 \leq \ell \leq m $.
	Suppose that $ \cpoly Ju \neq \varnothing $.

	\medskip 
	 
	 \noindent 
	{\bf (1)} 
	If $ N \delta_\Lambda \notin \IZ $ is not an integer, then also $ N \delta_\Lambda - a_d \notin \IZ $ is not.
	In particular, we have $ 0 < N \delta_\Lambda - a_d - K  < 1 $.
	Recall that the latter corresponds to the smallest power of $ t $ appearing in the elements $ ( f ) $ and contributing to the polyhedron (i.e., appearing in those monomials of $ f_{\ell,*} $ whose $ y $-power is $ B $ with $ |B| < n_\ell $).
	From this, we obtain that
	$$
		f_{\ell,*} \equiv F_\ell(Y_*) \mod t, \ \ \  \ \ \mbox{for all } \ \ell \in \{ 1, \ldots, m \},
	$$
	where $ (y_*) $ are the strict transforms of $ (y) $ in $ R_* $
	and $ F_\ell(Y) = \ini_M(f_\ell) \in k[Y_1, \ldots, Y_r] $.
		
	Hence, if {we} denote by $ h_* $ the strict transform of an element $ h \in R $ under $ (\cS_* )$, then the map
	$$
	\begin{array}{rcccl}
		R & \to & R_* & \to & R_*/\langle t \rangle
		\\ 
		h & \mapsto & h_* & \mapsto & h_* \mod t,
	\end{array}
	$$
	induces an isomorphism from the graded ring of $ A $ at $ \fM $ to the ring of functions of the divisor  $ \mathrm{div}( t )$,
	$$
		\gr_\fM(A)  \longrightarrow A/ \langle t \rangle.
	$$ 
	Note that $ \gr_\fM(A) \cong \gr_M(R)/ \Ini_M(J)  = \gr_M(R)/ \langle F_1, \ldots, F_m \rangle $.
	
	\medskip 
	
	\noindent
	{\bf (2)}
	Let us consider the case $ N \delta_\Lambda \in \IZ_+ $.
	Using $ \Lambda : \IR^d \to \IR $, we define a positive linear form $ \cL : \IR^d \times \IR^r \to \IR $ by
	$$ 
	\cL (\x_1, \ldots, \x_e, \x_{e+1}, \ldots, \x_r ) := 
	\frac{\Lambda(\x_1, \ldots, \x_e)}{\delta_\Lambda} + | (\x_{e+1}, \ldots, \x_{e+r})|.
	$$
	Note that the $ \cL$-initial form of $ f_\ell $ coincides with the initial form of $ f_{\ell} $ along the face $ \cF_\Lambda $ of $ \cpoly Ju $ defined by $ \Lambda $.
	(The latter is defined as the $ 0 $-initial form plus the sum over those monomials for which the corresponding point in the projected polyhedron of $ ( f) $ \wrt $ ( u; y ) $ lie on the face $ \cF_\Lambda $).
	
	As we have explained in Definition \ref{Def:vLambda}, $ \cL $ induces a filtration $ v_{\cL,u,y} $ on $ A = R/J $ and the corresponding graded ring is denoted by $ \gr_\cL(A) = \gr_{\cL,u,y} (A ) $.
	We claim that there is an isomorphism from the latter
	to the ring of functions of the divisor  $ \mathrm{div}( t )$,
	$$
		\gr_{\cL}(A) \longrightarrow  A/\langle t \rangle.
	$$
	This follows by Proposition \ref{Prop:Poly_behavior}(1) which implies 
	$$
	f_{\ell,*} \equiv \ini_\cL (f_\ell)(u_*, y_*) \mod t, \ \ \  \ \ \mbox{for all } \ \ell \in \{ 1, \ldots, m \},
	$$
	where $ \ini_\cL(f_\ell) = \ini_\cL (f_\ell)_{u,y} $ (Definition \ref{Def:in_0_and_u_std}(2)) and $ ( u_* ) $ are the strict transforms of $ (u) $ in $ R_* $. 
\end{Obs}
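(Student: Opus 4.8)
The plan is to compute the ring of functions of the divisor $\mathrm{div}(t)$ on the final strict transform $X_* = \Spec(A_*)$, $A_* = R_*/J_*$ with $J_*\subset R_*$ the strict transform of $J$, by presenting it as a quotient of the polynomial ring $R_*/\langle t\rangle$ and then matching this presentation with the appropriate graded ring of $A$. Both cases rest on the same ingredients, combined differently according to where $\cpoly{\tJ}{\tu,t}$ lands in the $t$-direction. First I would record that $R_*/\langle t\rangle \cong k[U_*, Y_*]$ is a polynomial ring in $d+r$ variables over $k$: this is Lemma \ref{Lem:R(i,j)/t_polyring} applied along $(\cS)$ and extended verbatim over the $c$ additional blowing-ups of the second phase of $(\cS_*)$, each of which blows up $V(t,\ty^{(i)})$ and keeps the residue field $k$. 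Second I would recall from Proposition \ref{Prop:Poly_behavior}(1) that the minimal value of the $t$-coordinate on $\cpoly{\tJ}{\tu,t}$ at the end of $(\cS)$ is $N\delta_\Lambda - a_d > 0$, and that each of the $c$ second-phase blowing-ups translates the polyhedron by $(0,\dots,0,-1)$ (Construction \ref{Cons:S*}); hence at the end of $(\cS_*)$ the minimal $t$-coordinate is $N\delta_\Lambda - a_d - c$, the fractional part of $N\delta_\Lambda - a_d$. Since $a_d = N\la_d \in \IZ$, this fractional part lies in $(0,1)$ exactly when $N\delta_\Lambda\notin\IZ$ (case (1)) and equals $0$ exactly when $N\delta_\Lambda\in\IZ_+$ (case (2)); this already yields the first assertion of (1).

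The heart of the argument is the computation of $f_{\ell,*} \bmod t$. By Observation \ref{Obs:transform_Delta_under_S} together with the translations above, a monomial $u^A y^B$ of $f_\ell$ with $|B|<n_\ell$, corresponding to the point $\x = A/(n_\ell-|B|)$, is carried to $\tu_*^A\,t^{C_*}\,\ty_*^B$ whose final $t$-power $C_*$ is a non-negative integer with $C_*/(n_\ell-|B|) = L_0(\x) - a_d - c = N\Lambda(\x) - a_d - c$. The monomials of the $0$-initial form ($A=0,\ |B|=n_\ell$) are carried to $\ty_*^B$ with $C_*=0$, whereas monomials with $|B|=n_\ell$ and $A\neq 0$ acquire $t$-power $L_0(A)>0$, and higher-order monomials acquire strictly positive $t$-power as well, so all of these die modulo $t$. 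For the face monomials one has $\Lambda(\x)=\delta_\Lambda$, so their final $t$-coordinate equals the minimal value computed above. In case (1) this value lies in $(0,1)$, forcing $0<C_*<n_\ell-|B|$ and in particular $C_*\ge 1$; hence every face monomial (and a fortiori every off-face polyhedron monomial, whose $t$-coordinate is larger) is divisible by $t$, only the $0$-initial part survives, and $f_{\ell,*}\equiv F_\ell(Y_*)\bmod t$. In case (2) the face monomials have $t$-coordinate $0$, hence $C_*=0$ and survive, so $f_{\ell,*}\equiv \ini_\cL(f_\ell)(U_*,Y_*)\bmod t$, where $\ini_\cL(f_\ell)=\ini_{L_\Lambda}(f_\ell)$ is the initial form of $f_\ell$ along $\cF_\Lambda$.

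To finish I would assemble the isomorphisms from two explicit presentations. Since $(\cS_*)$ is permissible and each $x(i,j)$ is near to the origin (Proposition \ref{Prop:Seq_S_is_permissible}(1) and Construction \ref{Cons:S*}), the strict transforms $(f_*)=(f_{1,*},\dots,f_{m,*})$ form a standard basis for $J_*$ and in particular generate it; as $t$ is a regular parameter, this passes to the quotient and gives $A_*/\langle t\rangle = R_*/\langle t, J_*\rangle = k[U_*,Y_*]/\langle f_{\ell,*}\bmod t\rangle$. On the source side, $\gr_\fM(A)=\gr_M(R)/\Ini_M(J)=k[U,Y]/\langle F_\ell\rangle$, using \eqref{eq:assump_ord_initial} (which gives $\ini_M(f_\ell)=F_\ell(Y)$) and the standard-basis property, while $\gr_\cL(A)=\gr_\cL(R)/\Ini_\cL(J)=k[U,Y]/\langle\ini_\cL(f_\ell)\rangle$, the last equality relying on the preparedness of the vertices of $\cF_\Lambda$ (equivalently of $\tcF_\Lambda$, Proposition \ref{Prop:Poly_behavior}(2)) to guarantee that the $\cL$-initial forms of $(f)$ generate $\Ini_\cL(J)$. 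Comparing with the previous paragraph, the map $h\mapsto h_*\bmod t$ sends $U_i=\ini_M(u_i)\mapsto U_{i,*}$ and $Y_j=\ini_M(y_j)\mapsto Y_{j,*}$; in case (1) it carries $\langle F_\ell\rangle$ to $\langle F_\ell(Y_*)\rangle$ and descends to an isomorphism $\gr_\fM(A)\xrightarrow{\ \sim\ }A_*/\langle t\rangle$, and in case (2) it carries $\langle\ini_\cL(f_\ell)\rangle$ to $\langle\ini_\cL(f_\ell)(U_*,Y_*)\rangle$ and yields $\gr_\cL(A)\xrightarrow{\ \sim\ }A_*/\langle t\rangle$.

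I expect the main obstacle to be the clean transfer of the standard-basis and initial-ideal statements through reduction modulo $t$: verifying that the reductions $f_{\ell,*}\bmod t$ actually generate $J_*/\langle t\rangle$ (not merely lie in it), and, in case (2), that $\Ini_\cL(J)$ is indeed generated by the $\cL$-initial forms of the chosen well-prepared standard basis. Both ultimately rest on the preservation of the standard-basis property along the permissible, near sequence $(\cS_*)$ and on well-preparedness along the face. In addition, the bookkeeping of $t$-exponents across the two phases of $(\cS_*)$ must be done with care, so that the integrality argument ``$C_*>0$ and $C_*\in\IZ$ imply $C_*\ge 1$'' is invoked precisely where the fractional part is nonzero.
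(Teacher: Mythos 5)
Your proposal is correct and follows essentially the same route as the paper's inline justification: track the $t$-exponent $C_*$ of each monomial through both phases of $(\cS_*)$, note that the minimal $t$-coordinate of the transformed polyhedron is the fractional part of $N\delta_\Lambda - a_d$ (you correctly identify the paper's undetermined $K$ as $c$), and conclude that modulo $t$ only the $0$-initial form survives in case (1), respectively only the $\cF_\Lambda$-face monomials in case (2). Your added care about why the reductions $f_{\ell,*}\bmod t$ actually present $A_*/\langle t\rangle$ (via preservation of the standard-basis property along the permissible near sequence) and why the $\cL$-initial forms generate $\Ini_\cL(J)$ (via preparedness along the face) fills in details the paper leaves implicit, but does not change the argument.
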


Putting everything together, we obtain the main theorem of this article.

\begin{Thm}[Theorem \ref{Thm:Intro}]
	Let $ A $ be a local Noetherian ring.
	Let $ R $ and $ \cR $ be regular local rings and $ J \subset R $ and $ \cJ \subset \cR$ be non-zero ideals such that $ R/ J \cong \cR/\cJ \cong A $.
	Let $ \la_1, \ldots, \la_d \in \IQ_+ $ be positive rational numbers and denote by $ \Lambda : \IR^d \to \IR $ the corresponding positive linear form.
	Let $ (v) = (v_1 ,\ldots, v_d) $ and $ (\cv) = (\cv_1 ,\ldots, \cv_d) $ be elements in $ A $ whose flags in $ A $ induced by $ \Lambda $ coincide,
	$$
	F^\Lambda_{\bullet, v} = F^\Lambda_{\bullet, \indcv} 
	$$
	Let $ (u) = (u_1, \ldots, u_d ) $, resp.~$ (\cu) = (\cu_1, \ldots, \cu_d ) $, be elements in $ R $, resp.~$ \cR $, mapping to $ ( v ) $, resp.~$ (\cv) $, under the corresponding canonical projection. 
	Then 
	$$
	 \delta_\Lambda (J;u) = \delta_\Lambda (\cJ;\cu)
	$$
	and, if $ \delta_\Lambda (J;u) < \infty $, then 
	$$
	 \gr_\Lambda(R/J) \cong \gr_\Lambda(\cR/\cJ).
	$$
	More precisely: the isomorphism $ R/ J \cong \cR/\cJ $ respects the filtration defined by $\Lambda$ (Definition~\ref{Def:vLambda}).
\end{Thm}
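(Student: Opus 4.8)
The plan is to show that every object named in the conclusion can be recovered from the combinatorial blowing-up sequence $(\cS_*)$ of Construction \ref{Constr:blow-ups_L}, and that this sequence is manufactured solely from $A=R/J$ and the flag $F^\Lambda_\bullet$, hence does not see the embedding. Concretely, $\delta_\Lambda$ will come from the \emph{length} of the sequence via Theorem \ref{Thm:delta_from_S_*}, and $\gr_\Lambda$ from the ring of functions of $\mathrm{div}(t)$ at its end via the concluding Observation; the hypothesis $F^\Lambda_{\bullet,v}=F^\Lambda_{\bullet,\indcv}$ then forces both embeddings to produce the same length and the same coordinate ring of $\mathrm{div}(t)$.

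First I would arrange that condition $(\ast)$ and the normalisation \eqref{eq:assump_ord_initial} are in force. For each embedding separately I apply the \'etale covering of Proposition \ref{Prop:etale}: by Corollary \ref{Cor:sta_holds} these hypotheses then hold, while Proposition \ref{Prop:etale}(1) gives $\delta_\Lambda(J\cdot S;w)=\expo\,\delta_\Lambda(J;u)$ (so $(\ast)$ is achieved by taking $\expo$ large, since $\delta_\Lambda\in\IQ$ is attained at a vertex of the rational polyhedron), and Proposition \ref{Prop:etale}(2) shows that $\nu_\Lambda$, hence $\gr_\Lambda$, is untouched by the covering. The point to watch is that the two coverings $B$ and $\cB$ of $A$ need not be isomorphic as rings; what survives is that each is \'etale over $A$ with flag pulled back from the common flag $F^\Lambda_{\bullet,v}=F^\Lambda_{\bullet,\indcv}$, so that the purely combinatorial data extracted below depend only on $(A,F^\Lambda_\bullet)$ and on the shared exponent $\expo$, and the factor $\expo$ cancels at the end.

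With $(\ast)$ and \eqref{eq:assump_ord_initial} available I invoke Proposition \ref{Prop:Same_sequences}: the restriction to $X=\Spec(A)$ of the sequence $(\cS_*)\subset\Spec(R)$ coincides with the intrinsic sequence \eqref{eq:first_bu_seq}--\eqref{eq:second_bu_seq} built by Construction \ref{Constr:blow-ups_L} out of $A$, the regular $A$-sequence, and $\Lambda$. Inspecting that construction, the starting data involve only the flag ideals $I_{F_k}$, and every later center $Y(i,j)=T(i,j)\cap V(i,j)_{k(i,j)}$ is governed by permissibility and by the directrix of the successive strict transforms, all intrinsic to the scheme; thus the whole sequence, and in particular its length, depends only on $(A,F^\Lambda_\bullet)$. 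Since the hypothesis makes this pair identical for the two embeddings, the two sequences have equal length for every $\rho$, and Theorem \ref{Thm:delta_from_S_*} yields $\delta_\Lambda(J;u)=\delta_\Lambda(\cJ;\cu)$ after passing to the limit over $\rho$ and dividing out $\expo$ (when $\delta_\Lambda=\infty$ both sequences are infinite, giving the same conclusion).

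Finally, for the graded ring I choose $\rho$ with $N\delta_\Lambda\in\IZ_+$ and apply case (2) of the concluding Observation, which identifies $\gr_\Lambda(R/J)=\gr_\cL(A)$ with the coordinate ring $A/\langle t\rangle$ of $\mathrm{div}(t)$ at the end of $(\cS_*)$ through $h\mapsto h_*\bmod t$; as this divisor belongs to the intrinsic sequence, the same holds for the second embedding, whence $\gr_\Lambda(R/J)\cong A/\langle t\rangle\cong\gr_\Lambda(\cR/\cJ)$. Because the filtration $v_{L_\Lambda}$ on $A$ corresponds under this map to the order along $\mathrm{div}(t)$, which is intrinsic, the prescribed isomorphism $R/J\cong\cR/\cJ$ transports one filtration to the other, giving the ``more precisely'' clause. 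I expect the genuine difficulty to sit not in this assembly but in what makes Proposition \ref{Prop:Same_sequences} intrinsic in the first place, namely the correct computation of the \emph{directrix} (not merely the ridge) at each generic point $\ep(i,j)$ in positive or mixed characteristic; this is precisely Lemma \ref{Lem:key}, which replaces the characteristic-zero semicontinuity argument of \cite{CosRevista} and is the one step where the general setting causes real trouble.
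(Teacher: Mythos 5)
Your proposal follows the paper's own proof essentially step for step: reduce to condition $(\ast)$ and \eqref{eq:assump_ord_initial} via the \'etale covering of Proposition \ref{Prop:etale}, identify the embedded sequence $(\cS_*)$ with the intrinsic sequence of Construction \ref{Constr:blow-ups_L} via Proposition \ref{Prop:Same_sequences} (with Lemma \ref{Lem:key} as the genuinely hard input, which you correctly single out), recover $\delta_\Lambda$ from the length via Theorem \ref{Thm:delta_from_S_*}, and read off $\gr_\Lambda$ as the order filtration along $\mathrm{div}(t)$ at the end of the sequence. The only cosmetic difference is that you route the graded-ring identification through case (2) of the Observation preceding the theorem rather than directly through Theorem \ref{Thm:A4_CosRevista}(3) as the paper does, and you are in fact slightly more careful than the paper in noting that the two \'etale coverings $B$ and $\cB$ need not be isomorphic.
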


Hence, we can also write $ \delta_\Lambda(A, {v}) := \delta_\Lambda ( \cpoly Ju ) $.
Note that $ \delta_\Lambda ( A, {v} ) = \infty $ if and only if $ \cpoly Ju = \varnothing $.

\begin{proof}
	The flag $ F^\Lambda_{\bullet, v}  $ defines a permissible sequence of blowing-ups for $ X = \Spec(A) $, \eqref{eq:first_bu_seq} and \eqref{eq:second_bu_seq}.
	By Proposition \ref{Prop:Same_sequences}, the latter coincides with $ (\cS_*) $ which was constructed in the embedded situation $ J \subset R $. 
	Furthermore, $ \delta_\Lambda (J ; u) $ can be recovered from the length of $ (\cS_*)  $,
	see Theorem \ref{Thm:delta_from_S_*}.
	
	Since $ F^\Lambda_{\bullet, v} = F^\Lambda_{\bullet, \indcv}  $, they provide the same sequence of blowing-ups and in particular, this provides $ \delta_\Lambda (J;u) = \delta_\Lambda (\cJ;\cu) $.
	When $\delta_\Lambda<\infty$, the sequence \eqref{eq:second_bu_seq} of Construction~\ref{Cons:S*} is finite. 
	{Without loss of generality, we assume $ \la_1 \leq \ldots \leq \la_d $ and $ \la_d < \delta_\Lambda $.}
	Take any $g\in R$, noting that when
	 $ {N\la_1, \ldots, N \la_d, N\delta_\Lambda \in \IZ_{\geq 0}} $, the sequence 
	 $ (\cS_*) $ is the {sequence
	\eqref{eq:seq_bu_for_ref}
	 for} the system $(s)=(u,y)$ and the linear form
		 $$
		 L'(x_1,\ldots,x_{d},x_{{d}+1},\ldots,x_{d+r})
		 =
		 N \cdot {\Big(} 
		 \Lambda(x_1,\ldots,x_d) + \delta_\Lambda \cdot (x_{d+1}+\ldots+x_{d+r})
		 {\Big)}.
		 $$
		 Theorem~\ref{Thm:A4_CosRevista}(3) gives 
		 $g = t^{\mathcal N} \, \widetilde{g} \in   R(l+1,c) ${,} 
		 where 
		 {$ \mathcal N := v_{L',s}(g) = N \cdot  \delta_\Lambda \cdot \nu_\Lambda (g) $ 
		 	(Recall the definition of $ \nu_\Lambda $ in \eqref{eq:def_nu_Lamda})
		 	and}
		 $\widetilde{g}$ is the strict transform of $g$,
		 {and} $R(l+1,c)$ is the last ring of the sequence 
		 ${(\mathcal{S}_*)}$.  So for $\overline{g}\in A$, we get 
		 $\overline{g}=t^{N {\delta_\Lambda}\nu_\Lambda(\overline{g})}\overline{g}'\in A(l+1,c)${,} 
		 where 
		 $A(l+1,c) $ is the function ring of $X(l+1,c)$: $N {\delta_\Lambda}\nu_\Lambda(\overline{g})$ is the order of $\overline{g} $ along the irreducible divisor div$(t)$ of $X(l+1,c)$.
\end{proof}

\bigskip 

\section{Corollaries}

We discuss some of the consequences of Theorem \ref{Thm:Intro}.
Let $ (A, \fM, k) $ be a local Noetherian ring and $ ( v ) = (v_1, \ldots, v_d) $ be a $ A $-regular sequence such that the ring of the directrix of $ \Spec( A/ \langle v \rangle ) $ at the origin coincides with the residue field  $ k $.
Let $ R $ be a regular local ring and $ J \subset R $ be an ideal such that $ R/J \cong A $,
and $ ( u ,y ) $ be a regular system of parameters for $ R $ such that $ u_i $ maps to $ v_i $ under the canonical projection from $ R $ to $ A $.

The {\em first face} of the polyhedron $ \cpoly Ju $ is defined as the face determined by the linear form $ \Lambda_0 : \IR^d \to \IR $, $ \Lambda_0 (\x_1, \ldots, \x_d) =  \x_1 + \ldots + \x_d $.
We set
$$
	 \delta (J;u):= \delta_{\Lambda_0} (J;u).
$$

\begin{Cor}
	The number $ \delta( J;u ) $ is an invariant of the singularity $ \Spec(A) $ and the {subscheme $ \mathcal{V}:= \Spec ( R/ \langle v_1, \ldots, v_d\rangle ) $}.
	In particular, it does not depend on the choice of the systems $ ( u_1, \ldots, u_d ) $ or $ (v_1, \ldots, v_d ) $.
\end{Cor}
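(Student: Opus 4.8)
The plan is to read this off directly from Theorem~\ref{Thm:Intro} by specializing to $\Lambda = \Lambda_0$ and checking what the associated flag encodes. First I would determine the flag $F^{\Lambda_0}_{\bullet, v}$: because $\Lambda_0$ has all coefficients equal to $1$, the barycentric decomposition of Observation~\ref{Obs:bary} yields a single distinct value, i.e.\ $l = 1$ and $\al(1) = 1$, so Definition~\ref{Def:Flag_for_Lambda} produces a flag with only one member,
$$
	F^{\Lambda_0}_{\bullet, v} = \{ F_1 \},
	\qquad
	F_1 = V(v_1, \ldots, v_d) = \Spec(A/\langle v \rangle).
$$
This is exactly the trace $X \cap \mathcal{V}$ of the subscheme $\mathcal{V}$ on $X = \Spec(A)$, so the flag carries no more information than $\mathcal{V}$ itself, equivalently than the ideal $\langle v_1, \ldots, v_d \rangle$.

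Next I would invoke Theorem~\ref{Thm:Intro} twice. For independence of the embedding and of the lift $(u)$: given two regular presentations $R/J \cong A \cong \cR/\cJ$ with lifts $(u)$, resp.\ $(\cu)$, of the same $(v)$, the induced flags for $\Lambda_0$ coincide trivially, whence $\delta(J;u) = \delta(\cJ;\cu)$; thus $\delta$ depends neither on the chosen regular presentation nor on the lift. For independence of $(v)$: if $(\cv)$ is another regular $A$-sequence satisfying the standing directrix hypothesis with $\langle \cv \rangle = \langle v \rangle$, then $V(\cv_1, \ldots, \cv_d) = V(v_1, \ldots, v_d)$, so $F^{\Lambda_0}_{\bullet, v} = F^{\Lambda_0}_{\bullet, \indcv}$ and Theorem~\ref{Thm:Intro} again yields equality of the two values. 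Combining both, $\delta(J;u)$ depends only on $\Spec(A)$ and on $\mathcal{V}$, which is the assertion.

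I do not anticipate a serious obstacle, since the corollary is a direct specialization of the main theorem. The one point requiring a moment's care is recognizing that for $\Lambda_0$ the flag degenerates to the single subscheme $\mathcal{V}$, so that the general flag datum appearing in Theorem~\ref{Thm:Intro} reduces here precisely to the geometric datum of $\mathcal{V}$; once this is observed, both independence statements follow immediately from the theorem.
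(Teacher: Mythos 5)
Your proposal is correct and follows essentially the same route as the paper: compute the barycentric decomposition of $\Lambda_0$ to see $l=1$, conclude that the flag degenerates to the single member $F_1 = V(v_1,\ldots,v_d) = \mathcal{V}\cap X$, and then apply Theorem~\ref{Thm:Intro}. You merely spell out more explicitly than the paper the two invocations of the theorem (change of embedding/lift, and change of generators of $\langle v\rangle$), which is a faithful elaboration rather than a different argument.
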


\begin{proof}
	The linear form $ \Lambda_0 $ is defined by $ \la_1 = \ldots = \la_d = 1 $.
	Hence the barycentric decomposition of $ \Lambda_0 $, we have $ b_1 = 1 $ and $ l = 1 $ and $ \al(1) = 1 $,
	$ \Lambda_0 (\x) = b_1 (\x_1 + \ldots + \x_d) $,
	see Observation \ref{Obs:bary} for the notations.
	But this implies that the corresponding flag $ F_\bullet^{\Lambda_0} $ (Definition \ref{Def:Flag_for_Lambda}) is given by only $ F_1 = {\mathcal V} $.
	Theorem \ref{Thm:Intro} provides the assertion. 
\end{proof}

Indeed, the sequence of blowing-ups resulting for this particular flag coincides with the one given in the proof of \cite{InvDim2} Theorem 3.15, where the above corollary is proven for $ \dim (R/J) \leq 2 $.

\begin{Cor}
	Let us fix $ A $ and $ ( v ) $.
	Let $ \cR $ be another regular local ring and $ \cJ \subset \cR$ be another non-zero ideals such that $ \cR/\cJ \cong A $, and $ (\cu) = (\cu_1, \ldots, \cu_d) $ be a system of elements in $ \cR$ that is mapped to $ (v) $ under the canonical projection.
	
	The characteristic polyhedra $ \cpoly Ju $ and $ \cpoly{\cJ}{\cu} $ coincide,
	$$
		\cpoly Ju = \cpoly{\cJ}{\cu}.
	$$
	In particular, they have the same compact faces. 
\end{Cor}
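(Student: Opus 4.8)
The plan is to deduce everything from Theorem~\ref{Thm:Intro}, reducing the equality of polyhedra to the equality of the associated numbers $\delta_\Lambda$. First I would note that the hypotheses are arranged so that the flag condition of Theorem~\ref{Thm:Intro} is satisfied for free: both $(u)$ and $(\cu)$ project to the very same regular sequence $(v)$ in $A$, so in the notation of Definition~\ref{Def:Flag_for_Lambda} one has $\cv = v$ and therefore $F^\Lambda_{\bullet, v} = F^\Lambda_{\bullet, \indcv}$ for \emph{every} positive linear form $\Lambda$ determined by rationals $\la_1, \ldots, \la_d \in \IQ_+$. Applying Theorem~\ref{Thm:Intro} to each such $\Lambda$ then yields
$$
	\delta_\Lambda(J;u) = \delta_\Lambda(\cJ;\cu) \qquad \text{for all } \la_1, \ldots, \la_d \in \IQ_+ .
$$

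The second, and only geometric, step is to recover the polyhedron from this family of numbers. Set $\Delta := \cpoly Ju$ and $\Delta' := \cpoly{\cJ}{\cu}$; both are closed convex subsets of $\IR^d_{\geq 0}$ satisfying $\Delta + \IR^d_{\geq 0} = \Delta$ (resp.\ for $\Delta'$), by the very definition of the projected polyhedron. I would use the support-function description
$$
	\Delta = \bigcap_{\Lambda} \{ \x \in \IR^d \mid \Lambda(\x) \geq \delta_\Lambda(J;u) \},
$$
where $\Lambda$ ranges over all positive linear forms with rational coefficients. The inclusion ``$\subseteq$'' is immediate from $\delta_\Lambda(J;u) = \inf\{\Lambda(\x) \mid \x \in \Delta\}$. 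For ``$\supseteq$'', given $\x_0 \notin \Delta$ I would separate $\x_0$ from the closed convex set $\Delta$ by a linear functional; the upward-closedness $\Delta + \IR^d_{\geq 0} = \Delta$ forces the separating functional to have nonnegative coefficients, and a small perturbation makes them strictly positive and rational while keeping the separation strict, producing a $\Lambda$ with $\Lambda(\x_0) < \delta_\Lambda(J;u)$. The identical description holds for $\Delta'$ with the same right-hand sides, whence $\Delta = \Delta'$; in particular the two polyhedra have the same compact faces.

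Finally I would record the degenerate case inside the same scheme: if $\cpoly Ju = \varnothing$ then $\delta_\Lambda(J;u) = \infty$ for every $\Lambda$, the equality of $\delta$'s forces $\delta_\Lambda(\cJ;\cu) = \infty$ throughout, and hence $\cpoly{\cJ}{\cu} = \varnothing$ as well. The substantive input is entirely Theorem~\ref{Thm:Intro}; the one place demanding care is the separation step, where it is essential that the polyhedron is stable under adding $\IR^d_{\geq 0}$, so that the separating functional can be taken among the admissible strictly positive forms $\Lambda$ rather than a general linear functional.
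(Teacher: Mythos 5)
Your proposal is correct and follows essentially the same route as the paper: both reduce the claim to the equality $\delta_\Lambda(J;u) = \delta_\Lambda(\cJ;\cu)$ for all positive linear forms $\Lambda$ (automatic from Theorem~\ref{Thm:Intro} since $(u)$ and $(\cu)$ project to the same $(v)$, hence identical flags), and then recover the polyhedron by a convex-separation argument, the paper phrasing this as a contradiction at a vertex while you spell out the support-function description and the perturbation to a strictly positive rational form. The extra care you take with the separation step (using $\Delta + \IR^d_{\geq 0} = \Delta$ to force nonnegative coefficients) and with the empty-polyhedron case is sound and only makes the argument more explicit than the paper's.
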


\begin{proof}
	Suppose $ \cpoly Ju \neq \cpoly{\cJ}{\cu} $.
	Then, without loss of generality, there exists a vertex $ \x \in \cpoly Ju $ which is not contained in the other polyhedron,
	$ \x \notin \cpoly{\cJ}{\cu} $.
	This implies that there is a positive linear form $ \Lambda : \IR^d \to \IR$ such that
	$ 
		\Lambda( \x' ) > \Lambda(\x),\mbox{ for every } \x' \in \cpoly{\cJ}{\cu}.
	$
	But this contradicts $ \delta_\Lambda (J;u) = \delta_\Lambda (\cJ, \cu) $, which holds by Theorem \ref{Thm:Intro}.
\end{proof}

This leads to

\begin{Def}
	We define the {\em characteristic polyhedron of the singularity $ \Spec(A) $ \wrt $ (v) $} as
	$$
	\cpoly Av := \cpoly Ju.
	$$
\end{Def}

\noindent 
Note: A natural choice for $ (v) $ is such that the ring of the directrix of $ A $ at $ \fM $ is 
	$
		k[V_1, \ldots V_d].
	$
	
\medskip	

But it may happen that one does not necessarily want to fix the entire system $ ( v ) = (v_1, \ldots, v_d ) $, but only some of them, say $ (v_I) := ( v_i \mid i \in I ) $, for $ I \subset \{ 1, \ldots, d \} $.
For example, such a situation arises along a process of resolving singularities via a sequence of blowing-ups. 
There we have the additional data of the exceptional divisors of the preceding blowing-ups which need to be taken into considerations.
Hence it is natural to fix those local coordinates defining the exceptional divisors. 
Using our main result, we can detect which part of the polyhedron is an invariant of the singularity $ \Spec(A) $ and the additional data of the exceptional divisor.
	More precisely, 
	if the flag $ F_\bullet^\Lambda $ defined by a positive linear form $ \Lambda : \IR^d \to \IR $
	is invariant under every change in $ ( v) $ that fixes the elements of $ ( v_I ) $, 
	then $ \delta_\Lambda (A;v) $ is an invariant of $ A $ and $ ( v_I ) $.

Let us illustrate this in the case that $ d = 2 $ and $ s = 1 $, i.e., we have $ ( v_1, v_2 ) $ and we fix $ v_1 $. 
Recall that the first face of the polyhedron is the one defined by the linear form $ \Lambda_0 (\x) = |\x| $.
The previous result implies that the part of $ \cpoly Av $ that is left of the first face is an invariant of $ ( A, v_1 ) $.

\smallskip 

\begin{center}
	\begin{tikzpicture}[scale=0.7]
	
	\draw (1,4)  coordinate (v1);
	\draw (1.2,3)  coordinate (v1+);
	\draw (1.8,1.8)  coordinate (v2);
	\draw (3,0.6)  coordinate (v3); 
	\draw (5.7,0.2)  coordinate (v4);

	\path[fill=softgray] (1, 5) -- (v1)  -- (v1+) -- (v2) -- (v3) -- (v4) -- (7,0.2) -- (7,5) -- (0.5,5);
	
	\draw[->] (0,0) -- (7,0) node[right]{$e_1$};
	\draw[->] (0,0) -- (0,5) node[above]{$e_2$};
	
	\draw[thick] (1, 5) -- (v1)  -- (v1+) -- (v2) -- (v3) -- (v4) -- (7,0.2);
	\draw[ultra thick] (1, 5) -- (v1)  -- (v1+) -- (v2);
	\draw[ thick,white] (1, 5) -- (v1)  -- (v1+) -- (v2);
	
	\draw[thick, dashed] (v1) -- (1,-0.1);
	\draw[thick, dashed] (v1) -- (-0.1, 4);
	\draw[thick, dashed] (v2) -- (-0.1, 1.8);

	\node[left] at (-0.1,4) {$ \be $};
	\node[left] at (-0.1,1.8) {$ \ga $};
	\node[below] at (1,-0.1) {$ \al $};
	
	\draw[fill = white] (v1) circle [radius=0.085];
	\draw[fill = white] (v1+) circle [radius=0.085];
	\draw[fill = white] (v2) circle [radius=0.085];
	\draw[fill = black] (v3) circle [radius=0.08];
	\draw[fill = black] (v4) circle [radius=0.08];

	\end{tikzpicture}  
	
	{\bf Figure 5:} Illustration of $ \al, \be, \ga $;
	the left part of the polyhedron (white) is an invariant of $ A = R/J $ and $ (v_1) $, while the remaining part (black) is not.
\end{center}

\smallskip 

\noindent 
In particular, the following numerical data are invariants of $ A $ and $ v_1 $:
\begin{itemize}
	\item $ \al := \al(A,v) := \inf\{ \x_1 \in \IR_{\geq 0} \mid (\x_1, \x_2 ) \in \cpoly Av \} $. 
	\item $ \be := \be(A,v) := \inf\{ \x_2 \in \IR_{\geq 0} \mid (\al,\x_2) \in \cpoly Av )\} $.
	\item $ \ga := \ga (A,v) := \inf \{ {\x_2} \in \IR_{\geq 0} \mid {(\delta(A,v) -\x_2, \x_2 )} \in \cpoly Av \} $. 
\end{itemize}
We also refer to \cite{InvDim2} Theorem 3.18, where the latter was proven directly and then theses numbers where used in the construction of an invariant measuring the improvement of a singularity along a given resolution process. 
Hence, the understanding of the part of the characteristic polyhedron that is actually an invariant of the singularity $ \Spec(A) $ and some divisors corresponding to fixed elements $ ( v_i )_{i\in I} $ 
may provide new insights when looking for invariants for resolution of singularities in dimension three and larger.

\setcounter{section}{0}
\renewcommand{\thesection}{\Alph{section}}

\bigskip 

\section{Appendix: Computing the Directrix from the Ridge}\label{app:calculdeladirectrice}

Let $ k $ be a field and $ C = V ( I ) \subset \A^n_k $ be a cone defined by some homogeneous ideal $ I \subset k [X_1, \ldots, X_n ] $. 

The directrix of $ C $ is a very natural notion that has important applications in the study of singularities and their resolution, in particular,
one likes to find a minimal set of variables $ (Y_1, \ldots, Y_r ) $ such that there exists a system of generators for $ I $ contained in $ k [Y_1, \ldots, Y_r ] $, i.e., to find a minimal vector space $V\subset<X_1, \ldots, X_n >_k$   such that there exists a system of generators for $ I $ contained in $k[V]$. 
Formally, the {\em directrix of $ C $} (\cite{CJS}~Definitions 1.8 and 1.26) is defined as the biggest $ k $-subvector space $ W $ of $ \A^n_k = \Spec (k [X_1, \ldots, X_n])$ leaving $ C $ stable under translation, i.e., for which we have
$$
C + W  = C.
$$
This vector space has for equations $(Y_1, \ldots, Y_r ) \subset k [X_1, \ldots, X_n ]$ and $ k [Y_1, \ldots, Y_r ]=k[V] $ is its algebra of invariants as a group of translations of $ \A^n_k $.

A generalization for this is the {\em ridge}.  It is the  group $F$ of translations of $\A^n_k $  which leave stable $C$.  See the precise definition in~\cite{GiraudMaxPos}~1.5. Its algebra of invariants $k[U]$ is the smallest algebra of  $ k [X_1, \ldots, X_n] $ generated by homogeneous additive polynomials $(\sigma_1,\ldots,\sigma_\tau)$  such that there exists a system of generators for $ I $ contained in $k[U]=k[\sigma_1,\ldots,\sigma_\tau]$ (the algebra of invariants of $F$). When char$(k)=0$, ridge and directrix coincide.

By definitions, the directrix is a subgroup of the ridge,  so $k[U]\subset k [V ] $. One can compute the ridge using differential operators, \cite{GiraudMaxPos}~Lemma~1.7. In \cite{ComputeRidge}, there is an effective algorithm to compute the ridge. 
Assume that $(F_1,\ldots,F_m)$ is an homogeneous standard basis of $ I \subset k [X_1, \ldots, X_n ] $. Then, by  \cite{GiraudMaxPos}~Lemma~1.7 and \cite{ComputeRidge}, $k[U]$ is generated by the $D_A^{(X)} F_i$ with $\vert A \vert<n_i:= \deg(F_i)$, where $A=(a_1,\ldots,a_n)$ is a multi index and $D_A^{(X)}$ is the Hasse-Schmidt derivation. Note that 
$$D_A^{(X)} F_i \in k[U], \ \vert A \vert<n_i.$$
 Furthermore, up to a change of ordering on the $X_i$, there is a minimal set of generators:

$$	
	\begin{array}{lcll}
		\si_1= X_1^{q_1}+\sum_{i\geq 2} \lambda_{1,i} \, X_i^{q_1},
		\\[10pt] 
		\si_2= X_2^{q_2}+\sum_{i\geq 3} \lambda_{2,i} \, X_i^{q_2},
		\\
		\hspace{16pt} \vdots
		\\[3pt] 
		\si_\tau= X_\tau^{q_\tau}+\sum_{i\geq 1+\tau} \lambda_{\tau,i} \, X_i^{q_\tau}.
	\end{array}
$$
$$	\begin{array}{c}
		\mbox{where } q_{j-1}\leq q_{j},
		\ \lambda_{j,i} \in k,
		\mbox{ for } 1\leq j < i \leq n,
		\\
		q_j =1 \ \hbox{when char}(k)=0, \ q_j \ \hbox{is a }p\hbox{-power when char}(k)=p >0.
	\end{array}
$$
In the case char$(k)=p>0$, let us construct $ k [V] $. Take any additive polynomial $P\in k[U]$.
$$
	P=\sum_{1\leq i \leq n}\lambda_i X_i^q,\ \lambda_i \in k,\ q=p^d,\ d\in \IZ_{\geq 0}.
$$
When $d=0$, as $k[U]\subset k [V ] $, then $P\in V$. When $d\geq 1$, take a basis $(\mu_1,\ldots,\mu_l)$ of $k^q[\lambda_1,\ldots,\lambda_n]$ over $k^q$ (a $q$-basis of $k^q[\lambda_1,\ldots,\lambda_n]$).  Then 
$$P=\sum_{1\leq j \leq l} \mu_j L_j^q,$$
with $L_j$ linear form in $(X_1,\ldots,X_n)$. To compute  $V$, one has to take a set of additive polynomials  generating $k[U]$ as a $k$-algebra, for example, take all the $\si_j,\ 1\leq j \leq \tau$, make the construction above, all the linear forms $L_j$ computed generate $V$ as a $k$-vector space.

\begin{Rk}
	\label{Rk:Pasdecidable}
	Fr\"{o}hlich and Shepherdson \cite{Pasdecidable}~Section~7 
have  shown that testing if an element is a $p$-th power is not decidable in
general. So our method to compute the directrix is far from being efficient in general. 
\end{Rk}

\end{document}